\documentclass[parskip=never,abstract=true]{scrartcl}

\usepackage{csquotes}

\usepackage[T1]{fontenc}
\usepackage[utf8]{inputenc}
\usepackage{lmodern}
\usepackage{yfonts}
\usepackage{textcomp}
\frenchspacing
\usepackage{setspace}
\usepackage[a4paper,left=2.5cm,right=2cm,top=2.5cm,bottom=2.5cm]{geometry}

\usepackage{extarrows}
\usepackage{etoolbox}
\usepackage{xparse}

\usepackage{mathtools}
\usepackage{amsthm,thmtools}
\usepackage{dsfont}
\let\mathbb\mathds
\usepackage{amssymb}
\usepackage[scr]{rsfso} 
\usepackage{bm}
\usepackage{euscript}
\usepackage{amsbsy}
\DeclareMathAlphabet\mathbfcal{OMS}{cmsy}{b}{n}

\usepackage{microtype}
\usepackage{authblk}
\usepackage{graphicx} 
\usepackage{tikz}
\usetikzlibrary{cd,quotes}
\usetikzlibrary{decorations.markings}
\usepackage{pgfplots}
\pgfplotsset{compat=1.13}
\usepackage{booktabs}
\usepackage{float}
\usepackage{environ}
\usepackage{xcolor,soul}
\usepackage[shortlabels]{enumitem}
\usepackage{eso-pic}


\usepackage[normalem]{ulem}


\usepackage{hyperref} 
\hypersetup{%
	colorlinks,%
	linkcolor={red!60!black},%
	citecolor={blue!50!black},%
	urlcolor={blue!80!black}%
}

\usepackage{tikz-cd}
\tikzset{Rightarrow/.style={double equal sign distance,>={Implies},->},
	triple/.style={-,preaction={draw,Rightarrow}},
	quadruple/.style={preaction={draw,Rightarrow,shorten >=0pt},shorten >=1pt,-,double,double
		distance=0.2pt}}

\def\on{\operatorname}

\def\CC{\mathbb{C}}
\def\B{\EuScript{B}}
\def\C{\EuScript{C}}
\def\D{\EuScript{D}}
\def\E{\EuScript{E}}
\def\X{\EuScript{X}}
\def\Y{\EuScript{Y}}

\def\DD{\mathbb{D}}

\def\Fun{\on{Fun}}
\def\Cat{\on{Cat}}
\def\iCat{\EuScript{C}\!\on{at}}

\def\Hom{\on{Hom}}
\def\D{\EuScript{D}}
\def\Nerv{\on{N}}




\SetEnumitemKey{axioms}{itemsep=0pt,align=left,itemindent=!,
                        listparindent=\parindent,parsep=\parskip,
                        before=,
                        label={\Roman*}}

\SetEnumitemKey{claims}{itemsep=0pt,align=left,itemindent=!,
                        listparindent=\parindent,parsep=\parskip,
                        before=,
                        label={\Roman*}}

\DeclarePairedDelimiterX\set[1]{\lbrace}{\rbrace}{#1}
\newlist{implications}{description}{1} 
\setlist[implications]{itemsep=0pt,leftmargin=\parindent}
\NewDocumentCommand\implication{o}
  {\IfValueTF{#1}
    {\auximplication#1\relax}
    {\item[\normalfont($\,\Rightarrow\,$)]}}
\NewDocumentCommand\auximplication{u-u\relax}
  {\item[\normalfont(#1)$\,\Rightarrow\,$(#2)]}

\makeatletter
\newcounter{diagram}[section]
\def\thediagram{\thesection.\arabic{diagram}}
\def\ftype@diagram{4}
\def\ext@diagram{diag}
\def\fnum@diagram{Diagram~\thediagram}
\def\fs@diagram{htbp!}
\ExplSyntaxOn
\NewDocumentEnvironment{diagram}{O{htbp!}m}
  {\@float{diagram}[#1]\centering}
  {
   
   \caption{}
   \label{#2}
   \end@float
  }

\newcounter{subdiagram}[diagram]
\def\thesubdiagram{\thediagram.\arabic{subdiagram}}
\NewEnviron{multidiagram}{\expandafter\domultidiagram\BODY\enddomultidiagram}
\NewDocumentCommand\domultidiagram{omu\enddomultidiagram}
 {
  \IfValueTF{#1}{\diagram[#1]}{\diagram}{}
   \refstepcounter{diagram}
   \centering
    \seq_clear:N \l_tmpb_seq
    \seq_set_split:Nnn \l_tmpa_seq { \next } { #3 }
    \seq_map_inline:Nn \l_tmpa_seq
     {
      \seq_put_right:Nn \l_tmpb_seq
       {
        \begin{tabular}[b]{@{}c@{}}
         ##1 \\[3ex]
         \refstepcounter{subdiagram}
         \label{#2\othercolon\the\value{subdiagram}}
         Diagram~\thesubdiagram 
        \end{tabular}
       }
     }
    \seq_use:Nn \l_tmpb_seq { \qquad }
   \let\label\@gobble
   \let\caption\@gobble
  \enddiagram
 }
\ExplSyntaxOff
\def\othercolon{:}
\makeatother

\declaretheoremstyle[bodyfont=\itshape,notefont=\bfseries]{abellanA}
\declaretheoremstyle[notefont=\bfseries]{abellanB}

\declaretheorem[style=abellanA,numberwithin=section,name={Theorem}]{theorem}

\declaretheorem[style=abellanA,numberlike=theorem,name={Lemma}]{lemma}
\declaretheorem[style=abellanB,numberlike=theorem,name={Definition}]{definition}
\declaretheorem[style=abellanB,numberlike=theorem,name={Remark}]{remark}

\declaretheorem[style=abellanA,numberlike=theorem,name={Proposition}]{proposition}

\declaretheorem[style=abellanB,numbered=no,name={Notation}]{notation}
\declaretheorem[style=abellanA,numberlike=theorem,name={Corollary}]{corollary}


\docsvlist{theorem,lemma,definition,remark,proposition,example,corollary}


\let\epsilon\varepsilon
\let\isom\simeq

\newcommand*\tensor{\otimes}

\newcommand*\mathblank{\mathord{-}}

\DeclareMathOperator*\colimdag{colim^\dagger}
\DeclareMathOperator*\colimsharp{colim^\sharp}
\DeclareMathOperator*\colimflat{colim^\flat}
\DeclareMathOperator*\colimnat{colim^\natural}
\DeclareMathOperator*\colim{colim}

\def\msSet{{\on{Set}_{\Delta}^+}}

\def\Cat{\on{Cat}}

\usepackage[bbgreekl]{mathbbol}



\makeatletter
\newcommand{\fixed@sra}{$\vrule height 2\fontdimen22\textfont2 width 0pt\rightarrow$}
\newcommand{\shortarrowup}[1]{%
	\mathrel{\text{\rotatebox[origin=c]{65}{\fixed@sra}}}
}
\newcommand{\shortarrowdown}[1]{%
	\mathrel{\text{\rotatebox[origin=c]{250}{\fixed@sra}}}
}
\makeatother

\newcommand{\downslash}{\!\shortarrowdown{1}}

\def\lra{\longrightarrow}
\def\lla{\longleftarrow}

\def\llra{\def\arraystretch{.1}\begin{array}{c} \lra \\ \lla \end{array}}

\newcommand{\goth}{\textfrak}


\tikzcdset{
  arrows={},
  nat/.style={Rightarrow},
  nat1/.style={nat,shift left=.8ex},
  nat2/.style={nat,shift right=.8ex,swap},
  map/.style={},
  map1/.style={map,shift left=.7ex},
  map2/.style={map,shift right=.7ex,swap},
  mapA/.style={map,shift left=.5ex},
  mapB/.style={map,shift left=.5ex},
  unique/.style={line cap=round,densely dotted},
  incl/.style={hookrightarrow},
  mono/.style={rightarrowtail},
  epi/.style={twoheadrightarrow},
  iso/.style={map,sloped,"{\tikz[x=.6ex,y=.3ex]{\draw[-](0,0)sin(1,1)cos(2,0)sin(3,-1)cos(4,0);}}"},
  line/.style={dash},
  line1/.style={line,shift left=.3ex},
  line2/.style={line,shift right=.3ex,swap},
}


\def\op{{\on{op}}}

\makeatletter
\newcommand*\dirlim{\mathop{\mathpalette\varlim@{\rightarrowfill@\scriptscriptstyle}}\nmlimits@}
\newcommand*\prolim{\mathop{\mathpalette\varlim@{\leftarrowfill@\scriptscriptstyle}}\nmlimits@}
\makeatother

\def\llra{\def\arraystretch{.1}\begin{array}{c} \lra \\ \lla \end{array}}


\newcommand{\nat}{\Rightarrow}



\tikzset{
  abellanarrows/.style={line cap=round,line join=round,line width=.4pt},
  abellanarrowlength/.store in=\abellanarrowlength,
}

\ExplSyntaxOn

\cs_generate_variant:Nn \tl_replace_all:Nnn { Nf }

\NewDocumentCommand \func { s O{} m }
 {
  \group_begin:
   \IfBooleanTF{#1}
    { \keys_set:nn { abellan / func } { aligned = true , #2 } }
    { \keys_set:nn { abellan / func } {#2} }
   \abellan_func:n {#3}
  \group_end:
 }
\NewDocumentCommand \arr { s o m }
 {
  \IfBooleanF{#1}
   { \bool_if:NT \l_abellan_aligned_bool { & } }
  \abellan_arr:n {#3}
 }
\NewDocumentCommand \addarr { o m m }
 {
  \keys_set:nn { abellan / func / addarrow } { name = {#2} , #3 }
  \tl_clear:N \l_abellan_arrname_tl
 }
\NewDocumentCommand \setupfunc { m } { \keys_set:nn { abellan / func } {#1} }

\bool_new:N \l_abellan_aligned_bool
\tl_new:N \l_abellan_func_tl
\tl_new:N \l_abellan_arrname_tl
\tl_new:N \l_abellan_arrmode_tl
\tl_new:N \g_abellan_func_substitutions_tl
\quark_new:N \q_abellan

\keys_define:nn { abellan / func }
 {
  aligned .bool_set:N = \l_abellan_aligned_bool ,
  aligned .initial:n = false ,
  mode .choices:nn = { base , tikz } { \tl_set_eq:NN \l_abellan_arrmode_tl \l_keys_choice_tl } ,
  mode .initial:n = base ,
  tikzarrowlength .code:n = \tikzset{abellanarrowlength={#1}} ,
  tikzarrowlength .initial:n = 2em ,
  tikzarrowstyle .code:n = \tikzset{abellanarrows/.append ~ style={#1}} ,
 }
\keys_define:nn { abellan / func / addarrow }
 {
  name .tl_set:N = \l_abellan_arrname_tl ,
  base .code:n =
    \abellan_addarrow:nnww { \l_abellan_arrname_tl } { base } #1 \q_abellan ,
  tikz .code:n =
    \abellan_addarrow:nnww { \l_abellan_arrname_tl } { tikz } #1 \q_abellan ,
  substitutions .code:n =
   \tl_map_inline:nn {#1}
    {
     \tl_gput_right:Nx \g_abellan_func_substitutions_tl
      {
       \exp_not:n { \tl_replace_all:Nnn \l_abellan_func_tl {##1} }
        { \arr{\l_abellan_arrname_tl} }
      }
    } ,
 }
\cs_new_protected:Npn \abellan_func:n #1
 {
  \resetdynamicto
  \tl_set:Nn \l_abellan_func_tl {#1}
  \tl_replace_all:Nfn \l_abellan_func_tl
   { \char_generate:nn { `: } { 12 } } { \colon }
  \bool_if:NTF \l_abellan_aligned_bool
   { \tl_replace_all:Nnn \l_abellan_func_tl { ; } { \\ } }
   { \tl_replace_all:Nnn \l_abellan_func_tl { ; } { ,\ } }
  \tl_use:N \g_abellan_func_substitutions_tl
  \bool_if:NT \l_abellan_aligned_bool { \! \begin {aligned} \scan_stop: }
  \tl_use:N \l_abellan_func_tl
  \bool_if:NT \l_abellan_aligned_bool { \\ \end {aligned} }
 }
\NewDocumentCommand \abellan_addarrow:nnww { m m O{} u\q_abellan }
 {
  \exp_args:Nc \NewDocumentCommand { abellan_arr_#1_#2:w } { #3 }
   {
    \use:c { abellan_arr_ \l_abellan_arrmode_tl :n } { #4 }
   }
 }
\cs_new_protected:Npn \abellan_arr:n #1
 {
  \use:c { abellan_arr_#1_\l_abellan_arrmode_tl :w }
 }
\cs_new_protected:Npn \abellan_arr_base:n #1
 {
  \mathrel{#1}
 }
\cs_new_protected:Npn \abellan_arr_tikz:n #1
 {
  \mathrel{\vcenter{\hbox{\tikz[abellanarrows]{#1}}}}
 }
\ExplSyntaxOff

\addarr{monomorphism}
 {
  substitutions = {>->}{ mono }{\rightarrowtail}{\mono} ,
  base = [o]{\IfValueTF{#1}{\rightarrowtail{#1}}{\mono}} ,
  tikz = [o]{\draw[>->,inner sep=0pt]
     (0,0)--({max(\abellanarrowlength,.5em\IfValueT{#1}{+width("$\scriptstyle#1$")})},0)
     \IfValueT{#1}{node[midway,above=.7ex]{\smash{$\scriptstyle#1$}}
     node[midway,below=.7ex]{}}
     ;} ,
  }
\addarr{mapsto}
 {
  substitutions = {\mapsto}{ mapsto } ,
  base = \mapsto ,
  tikz = {\draw[|->](0,0)--(\abellanarrowlength,0);} ,
 }
\addarr{rrarrows} 
 {
  substitutions = {\rightrightarrows}{ doublerr }{\doublerr} ,
  base = \rightrightarrows ,
  tikz = {\draw[>={To[width=.8ex,length=.4ex]},->](0,0)--(\abellanarrowlength,0);
          \draw[yshift=.8ex,>={To[width=.8ex,length=.4ex]},->](0,0)--(\abellanarrowlength,0);} ,
 }
\addarr{inclusion}
 {
  substitutions = {\hookrightarrow}{\incl}{ incl } ,
  base = [o]{\IfValueTF{#1}{\hookrightarrow{#1}}{\incl}} ,
  tikz = {\draw[{Hooks[right]}->](0,0)--(\abellanarrowlength,0);} ,
 }
\addarr{natural}
 {
 substitutions = {\Rightarrow}{=>}{ nat }{\nat} ,
  base = [o]{\IfValueTF{#1}{\xRightarrow{#1}}{\nat}} ,
  tikz = [o]{\draw[line cap=butt,double equal sign distance,-Implies,inner sep=0pt]
    (0,0)--({max(\abellanarrowlength,.3em\IfValueT{#1}{+width("$\scriptstyle#1$")})},0)
    \IfValueT{#1}{node[midway,above=.7ex]{\smash{$\scriptstyle#1$}}
    node[midway,below=.7ex]{}}
    ;} ,
 }
\addarr{epimorphism} 
 {
  substitutions = {\twoheadrightarrow}{->>}{ epi }{\epi},
   base = [o]{\IfValueTF{#1}{\xtwoheadrightarrow{#1}}{\epi}} ,
   tikz = [o]{\draw[->>,inner sep=0pt]
      (0,0)--({max(\abellanarrowlength,.5em\IfValueT{#1}{+width("$\scriptstyle#1$")})},0)
      \IfValueT{#1}{node[midway,above=.7ex]{\smash{$\scriptstyle#1$}}
      node[midway,below=.7ex]{}}
      ;} ,
   }
\addarr{to}
 {
  substitutions = {\to}{\rightarrow}{ to }{ funct }{ map }{->} ,
  base = [o]{\IfValueTF{#1}{\xrightarrow{#1}}{\to}} ,
  tikz = [o]{\draw[->,inner sep=0pt]
    (0,0)--({max(\abellanarrowlength,.5em\IfValueT{#1}{+width("$\scriptstyle#1$")})},0)
    \IfValueT{#1}{node[midway,above=.7ex]{\smash{$\scriptstyle#1$}}
    node[midway,below=.7ex]{}}
    ;} ,
 }
\newcommand*\resetdynamicto
  {\gdef\dynamicto{\arr*{to}\gdef\dynamicto{\arr*{mapsto}}}}
\resetdynamicto


\setupfunc{mode=tikz}

\ExplSyntaxOn
\NewDocumentCommand \printheader { m o m }
 {
  \par\noindent
  \begin{minipage}[t]{\textwidth}\noindent
  
  \begin{tabular}[t]{ll}
     & \keyval_parse:NNn \abellan_printname:n \abellan_printnamemail:nn { #3 } 
  \end{tabular}
  \vspace{.4cm}
  \end{minipage}
  \begin{center}\Large\bfseries
   #1 \IfValueT{#2}{\\[1ex] \large #2}
  \end{center}
  \vspace{.6cm}
 }
\tl_new:N \g_abellan_autores_tl
\tl_new:N \g_abellan_asignatura_tl
\cs_new_protected:Npn \abellan_printnamemail:nn #1 #2
 {
  #1 \\ & \quad\texttt{#2} \\ &
 }
\cs_new_protected:Npn \abellan_printname:n #1
 {
  #1 \\ &
 }
\ExplSyntaxOff


\makeatletter
\tikzcdset{
	open/.code     = {\tikzcdset{hook, circled};},
	open'/.code    = {\tikzcdset{hook', circled};},
	circled/.code  = {\tikzcdset{markwith = {\draw (0,0) circle (.375ex);}};},
	markwith/.code ={
		\pgfutil@ifundefined%
		{tikz@library@decorations.markings@loaded}%
		{\pgfutil@packageerror{tikz-cd}{You need to say %
				\string\usetikzlibrary{decorations.markings} to use arrows with markings}{}}{}%
		\pgfkeysalso{/tikz/postaction = {
				/tikz/decorate,
				/tikz/decoration={markings, mark = at position 0.5 with {#1}}}
		}
	},
}
\makeatother

\def\scr{\EuScript}

\title{Marked colimits and higher cofinality}
\author{Fernando Abell\'an Garc\'ia}
\date{}

\begin{document}
    \maketitle
	
  		\begin{abstract}
  			Given a marked $\infty$-category $\EuScript{D}^{\dagger}$ (i.e. an $\infty$-category equipped with a specified collection of morphisms) and a functor $\func{F: \D \to \mathbb{B}}$ with values in an $\infty$-bicategory, we define $\colimdag F$, the marked colimit of $F$. We provide a definition of weighted colimits in $\infty$-bicategories when the indexing diagram is an $\infty$-category and show that they can be computed in terms of marked colimits. In the maximally marked case $\D^{\sharp}$, our construction retrieves the $\infty$-categorical colimit of $F$ in the underlying $\infty$-category $\scr{B}\subseteq \mathbb{B}$. In the specific case when $\mathbb{B}=\goth{Cat}_{\infty}$, the $\infty$-bicategory of $\infty$-categories and $\D^{\flat}$ is minimally marked, we recover the definition of lax colimit of Gepner-Haugseng-Nikolaus. We show that a suitable $\infty$-localization of the associated coCartesian fibration $\on{Un}_{\D}(F)$ computes $\colimdag F$. Our main theorem is a characterization of those functors of marked $\infty$-categories $\func{f:\C^{\dagger} \to \D^{\dagger}}$ which are marked cofinal. More precisely, we provide sufficient and necessary criteria for the restriction of diagrams along $f$ to preserve marked colimits. 
  		\end{abstract}
		
	\tableofcontents	
	
\newpage		
\section{Introduction}
The theory of $\infty$-categories is, by now, well-established as an excellent way to treat coherence and higher homotopical data. However, the mere presence of this higher data means that many properties which can be explored most easily by explicit computation in the ordinary categorical setting are best accessed by universal properties in the $\infty$-categorical setting. Consequently, general constructions exhibiting universal properties take on an even greater importance in the study of higher category theory. Among such constructions, the theories of limits and colimits form an essential core around which many of the results in higher category theory are built. In the developing theory of $\infty$-bicategories, as in its strict 1-categorical analogue, these theories must be extended to allow for \emph{laxness} --- loosely, allowing cones over a functor to commute up to non-invertible 2-morphism.

This paper is a new entry in this story, dealing with lax colimits which depend on a collection of marked morphisms in the source. These morphisms can be viewed as “controlling the laxness“ of the colimit in question. Such collections of marked morphisms arise throughout the study of higher categories --- in localizations, cartesian fibrations, etc. The theory of marked colimits and marked cofinality developed in this paper represents a new technology for treating such objects. Along the way to $\infty$-cofinality, we will also see that the theory of weighted colimits expounded in \cite{GHN} can be viewed as one instance of the general theory of marked colimits, and will note a fundamental relation to the Grothendieck construction, generalizing extant results for lax colimits and usual $\infty$-colimits. 

\medskip
\noindent
{\textbf{A comment from the author (22/06/20}):}
After completion of this work, a prepint \cite{Berm20} appeared proving \autoref{thm:grothendieckcomputesintro} below. This result was achieved independently by both authors using completely different methods of proof. In this work, the definition of marked colimit provided is characterized by a 2-dimensional universal property. By the time the paper \cite{Berm20} was uploaded I was working in the proof of \autoref{prop:natMO} in order to show that the definition of weighted colimits appearing in \cite{GHN} satisfies this 2-dimensional universal property. The reader not willing to take \autoref{prop:natMO} on faith can adapt the proof to the definition of lax colimit provided in the aforementioned papers.

\noindent
{\textbf{Update (28/09/20):}}
The proof of \autoref{prop:natMO} can be now found in \cite{AGSb}.

\medskip
\noindent
{\textbf{Marked colimits in 2-categories.}}
Let $\func{F: \mathbb{C} \to \mathbb{B}}$ be a 2-functor. A \emph{lax cone} for $F$ with vertex point $b \in \mathbb{B}$ is given by the following data
\begin{itemize}
	\item An object $b \in \mathbb{B}$.
	\item For every object $c \in C$ a morphism $\func{\alpha_c:F(c) \to b}$.
	\item For every morphism $\func{u:c \to c'}$ in $C$ a 2-morphism $\func{\theta_u: \alpha_c \nat  \alpha_{c'} \circ  F(u)}$ depicted by a 2-commutative diagram
	\[
	\begin{tikzcd}
	F(c)\arrow[rr, "F(u)"]\arrow[dr,bend right=5, "\alpha_c"'{name=foo}] & &|[alias=X]| F(c') \arrow[dl,bend left=5,"\alpha_{c'}"] \\
	 & b & \arrow[nat, from=foo,
	 to=X,  shorten <=.6cm,shorten >=.6cm, "\theta_u"]
	\end{tikzcd}
	\]
\end{itemize}
These data must satisfy the following set of axioms
\begin{enumerate}[label=(\Roman{*})]
	\item Unitality: $\theta_{\on{id}_c}=\on{id}_{\alpha_c}$ for every $c \in \mathbb{C}$.
	\item Composability: Given $\func{u: c \to c'}$ and $\func{v: c' \to c'' }$ the following equation holds
\[
	\begin{tikzcd}
		F(c) \arrow[dr,bend right=10,"\alpha_c"'{name=A}] \arrow[r,"F(u)"] & |[alias=B]| F(c') \arrow[d,swap,"\alpha_{c'}"'{name=C}] \arrow[r,"F(v)"] &  |[alias=D]| F(c'') \arrow[dl,bend left=10,"\alpha_{c''}"]  \\
		&  b & & \arrow[nat, from=A,
	 to=B, swap, shorten <=.2cm,shorten >=.2cm]  \arrow[nat, from=C,
	 to=D, swap, shorten <=.3cm,shorten >=.3cm]
	\end{tikzcd}
	\!  \! \! \! \! \! \! \! \! \! \! \! \! \! \! \! \! \! \! =
	\begin{tikzcd}
	F(c)\arrow[rr, "F(vu)"]\arrow[dr, bend right=5, "\alpha_c"'{name=foo}] & &|[alias=X]| F(c'') \arrow[dl,bend left=5,"\alpha_{c'}"] \\
	 & b & \arrow[nat, from=foo,
	 to=X, shorten <=.6cm,shorten >=.6cm, "\theta_{vu}"]
	\end{tikzcd}
\]
   \item Compatibility with 2-morphisms: For every 2-morphism $\func{\eta:u \nat u'}$ in $\mathbb{C}$ the following equation holds
   \[
	\begin{tikzcd}
	F(c)\arrow[rr, "F(u)"]\arrow[dr,bend right=5, "\alpha_c"'{name=foo}] & &|[alias=X]| F(c') \arrow[dl,bend left=5,"\alpha_{c'}"] \\
	 & b & \arrow[nat, from=foo,
	 to=X,  shorten <=.6cm,shorten >=.6cm, "\theta_u"]
	\end{tikzcd}
	=
	\begin{tikzcd}
	F(c)\arrow[rr,swap, bend left=30,"F(u)"'{name=A}] \arrow[rr,"F(u')"'{name=B}]\arrow[dr,bend right=5, "\alpha_c"'{name=foo}]  & &|[alias=X]| F(c') \arrow[dl,bend left=5,"\alpha_{c'}"] \\
	 & b & \arrow[nat,swap,"\theta_{u'}", from=foo,
	 to=X,  shorten <=.6cm,shorten >=.6cm]  \arrow[nat, from=A,
	 to=B,  shorten <=.2cm,shorten >=.2cm]
	\end{tikzcd}
   \]

\end{enumerate}

A morphism of lax cones $\func{\set{\alpha_{c}}_{c \in C} \to \set{\beta_c}_{c\in C}}$ with vertex point $b$ is given by a family of 2-morphisms $\set{\func{\epsilon_c: \alpha_c \nat \beta_c}}_{c \in C}$ such that for every $\func{f: c \to c'}$ the following equation holds
\[
	\begin{tikzcd}
	F(c)\arrow[rr,bend left=15, "F(f)"]\arrow[dr,bend right=40, "\alpha_c"'{name=foo}] \arrow[dr,swap,bend left=40, "\beta_c"'{name=foo2}] \arrow[nat, from=foo,
	 to=foo2,  shorten <=.3cm,shorten >=.3cm]   & & |[alias=X]| F(c') \arrow[dl,"\beta_{c'}"] \arrow[nat, from=foo2,
	 to=X,  shorten <=.3cm,shorten >=.3cm] \\
	 & b & 
	\end{tikzcd}
	=
	\begin{tikzcd}
	F(c)\arrow[rr,bend left=15, "F(f)"]\arrow[dr, "\alpha_c"'{name=foo}] & &|[alias=X]| F(c') \arrow[dl,bend right=40,"\alpha_{c'}"'{name=U}] \arrow[dl,bend left=40,swap,"\beta_{c'}"'{name=V}] \\
	 & b & \arrow[nat, from=foo,
	 to=U, swap, shorten <=.2cm,shorten >=.2cm] \arrow[nat, from=U,
	 to=V, swap, shorten <=.3cm,shorten >=.3cm]
	\end{tikzcd}
\]
One can thenn show that this defines a category of lax cones for $F$ with vertex $b$. Furthermore, we can arrange the previous definitions into a 2-functor with values in the 2-category of categories
\[
	\func{\mathbb{B} \to \mathbb{C}\!\on{at}; b \mapsto \set{\text{category lax cones with vertex }b}}
\]
The $\emph{lax colimit}$ of $F$ is then defined as an object $\colimflat\limits_{C}F \in \mathbb{B}$ correpresenting the functor above. In practice it is common to work with a category $C$ (resp. 2-category $\mathbb{C}$) equipped with a collection of chosen morphisms containing the identities  (marked categories, resp. marked 2-categories) that one wants to formally invert. It is thus desirable to have a theory of colimits adapted to accomodate the extra information present in marked (2)-categories. 

Let $\mathbb{C}^{\dagger}$ be a marked 2-category. We define a \emph{marked cone} for $F$ to be a lax cone such that the 2-morphisms $\theta_f$ are invertible whenever $f$ is marked morphism in $\mathbb{C}^{\dagger}$. Paralleling the construction above, we obtain the notion of the \emph{marked colimit} of $F$. This definition was already present in the literature under the name of $\sigma$-colimits \cite{Dubuc_limits}, where it was succesfully applied to define a 2-dimensional theory of flat pseudofunctors. Although similar in spirit, our definition of marked colimits presented in section 3 will depart from that of Descotte et al. The main cause of this difference is our use of weighted colimits as the jumping-off point of our theory, rather than using different levels of laxness (see Definition 2.4.3 in \cite{Dubuc_limits}) in the natural transformation defining a marked cone. This is because we want to our definitions to work in the setting where $\mathbb{C}$ is an $\infty$-category and $\mathbb{B}$ is an $\infty$-bicategory. In our preferred model for $\infty$-bicategories, scaled simplicial sets, we do not have direct access to different levels of laxness in the natural transformations.
\medskip
\noindent
{\textbf{Marked colimits in $\infty$-bicategories.}} Let $\goth{Cat}_{\infty}$ denote the $\infty$-bicategory of $\infty$-categories. Let $\func{F: \C \to \mathbb{B}}$ and $\func{W:\C^{\op} \to \goth{Cat}_{\infty}}$ be functors where $\C$ is an $\infty$-category and $\mathbb{B}$ is an $\infty$-bicategory. In section 3 we define $W \tensor F$, the colimit of $F$ weighted by $W$. Once this basic theory is stablished we embark upon the main construction of this work, the definition of marked colimits, appearing in section 4. Given a marked $\infty$-category $\C^{\dagger}$ we define a functor
\[
	\func{\mathfrak{C}^{\dagger}_{\C/}: \C^{\op} \to \goth{Cat}_{\infty}; c \mapsto \on{L}_{W}\left(\C^{\dagger}_{c/}\right)}
\]
where $\on{L}_{W}\left(\C^{\dagger}_{c/}\right)$ stands for the $\infty$-localization of the slice category $\C_{c/}$ with respect to the marking induced by $\C^{\dagger}$. This allows us to define the marked colimit of a functor $\func{F: \C \to \mathbb{B}}$ as $\mathfrak{C}^{\dagger}_{\C/} \tensor F$. The definition presented naturally extends the notion of $\infty$-colimits appearing in \cite{Cisinski},\cite{HTT}, as demonstrated by the following result.

\begin{theorem}\label{thm:sharpintro}
	Let $\func{F: \C \to \mathbb{B}}$ and suppose that $\colimsharp\limits_{\C}F$ exists. Then the $\infty$-categorical colimit of $F$ in the underlying $\infty$-category $\scr{B}\subseteq \mathbb{B}$ exists and there is an equivalence
	\[
		\func{\colimsharp\limits_{\C}F \to[\simeq] \colim\limits_{\C} F }
	\]
	In particular, both universal properties coincide if $\mathbb{B}=\scr{B}$.
\end{theorem}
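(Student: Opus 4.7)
My plan is to reduce the statement to the observation that the weight $\mathfrak{C}^{\sharp}_{\C/}$ is equivalent to the constant diagram at the terminal $\infty$-category, and then invoke the universal property of weighted colimits to identify the resulting weighted colimit with the ordinary $\infty$-colimit.

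The first step is to analyze the weight explicitly. By definition,
\[
    \func{\mathfrak{C}^{\sharp}_{\C/}: \C^{\op} \to \goth{Cat}_{\infty}; c \mapsto \on{L}_{W}(\C^{\sharp}_{c/}),}
\]
where, because we have maximally marked $\C$, every morphism of the slice $\C_{c/}$ is inverted. The slice $\C_{c/}$ has an initial object given by $\id_c$, so its classifying anima $|\C_{c/}|$ is contractible; since localizing an $\infty$-category at all of its morphisms computes its classifying anima (viewed as a terminal $\infty$-category up to equivalence), we obtain $\on{L}_{W}(\C^{\sharp}_{c/})\simeq \Delta^0$. Naturality in $c$ is automatic from the functoriality of the construction already set up in Section 4, so $\mathfrak{C}^{\sharp}_{\C/}$ is equivalent to the constant functor $\underline{\Delta^0}\colon \C^\op \to \goth{Cat}_\infty$.

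The second step is to identify the weighted colimit against $\underline{\Delta^0}$ with the conical $\infty$-colimit. Using the defining 2-dimensional universal property of $W\tensor F$ established in Section 3, for every $b\in \mathbb{B}$ one has a natural equivalence of $\infty$-categories
\[
    \Hom_{\mathbb{B}}\bigl(\underline{\Delta^0}\tensor F,\,b\bigr) \isom \Hom_{\Fun(\C^\op,\,\giCat)}\!\bigl(\underline{\Delta^0},\,\Hom_{\mathbb{B}}(F(\mathblank),b)\bigr) \isom \prolim_{c\in \C^\op}\Hom_{\mathbb{B}}(F(c),b).
\]
Restricting to the underlying mapping anima on both sides (i.e.\ passing to the maximal subgroupoid, which commutes with limits) yields a natural equivalence
\[
    \Hom_{\scr{B}}\bigl(\colimsharp_{\C}F,\,b\bigr) \isom \prolim_{c\in \C^\op}\Hom_{\scr{B}}(F(c),b)
\]
for $b\in\scr{B}$, which is exactly the universal property of the $\infty$-colimit of $F$ in $\scr{B}$. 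Hence $\colim_\C F$ exists in $\scr{B}$ and is equivalent to $\colimsharp_{\C}F$.

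The main technical obstacle I anticipate is the first step, namely justifying that the $\infty$-localization at all morphisms of a slice $\C_{c/}$ really is contractible in the sense needed here, and showing that the resulting equivalences $\on{L}_W(\C^{\sharp}_{c/})\simeq \Delta^0$ assemble into an equivalence of functors $\C^\op\to \giCat$ (rather than merely pointwise equivalences). Everything after that is a formal manipulation of the universal properties introduced in Sections 3 and 4; the passage from $\mathbb{B}$ to $\scr{B}$ in particular causes no trouble because limits of $\infty$-categories commute with taking maximal subgroupoids.
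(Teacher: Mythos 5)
Your argument is essentially the paper's: it first identifies the weight $\mathfrak{C}^{\sharp}_{\C/}$ with the constant functor $\underline{*}$ using the contractibility of the localized slices, and then identifies $\underline{*}\tensor F$ with the conical colimit in $\scr{B}$, which is precisely \autoref{prop:point} (whose proof in the paper is your second step: the end formula of \autoref{prop:natMO} plus passage to maximal sub-Kan complexes). The one issue you flag --- promoting the pointwise equivalences $\on{L}_W(\C^{\sharp}_{c/})\simeq \Delta^0$ to an equivalence of functors --- is dispatched in the paper by instead considering the \emph{unique} natural transformation $t\colon \mathfrak{C}^{\sharp}_{\C/}\Rightarrow \underline{*}$ to the terminal functor, which is natural by construction, and checking via \autoref{lem:locbasechange} that it is a levelwise equivalence.
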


The rest of section 4 is devoted to computational results. First, we show that weighted colimits indexed by an $\infty$-category $\C$ can be computed in terms of marked colimits. Let $\func{W:\C^{\op} \to \goth{Cat}_{\infty}}$ be a weight functor. Let us observe that its associated Cartesian fibration $\mathcal{W}$ comes equipped with a canonical marking given by the Cartesian edges. We denote this marked category by $\mathcal{W}^{\natural}$. With the aid of this observation, we can then prove:
\begin{theorem}
	Let $\func{F:\C \to \mathbb{B}}$ and $\func{W:\C^{\op} \to \goth{Cat}_{\infty}}$. Let $\func{p:\scr{W} \to \C}$ denote the Cartesian fibration classifying $W$. Suppose that $W \tensor F$ exists, then $\colimnat\limits_{\scr{W}}F \circ p$ exists and there is an equivalence in $\mathbb{B}$
	\[
		\begin{tikzcd}[ampersand replacement=\&]
			W \tensor F \arrow[r,"\simeq"] \& \colimnat\limits_{\scr{W}}F \circ p.
		\end{tikzcd}
	\]
\end{theorem}
We conclude section 4 by providing a thorough analysis of marked colimits in $\goth{Cat}_{\infty}$. To achieve this goal, we will employ the Grothendieck construction in its $\infty$-categorical  incarnation as the unstraightening functor (see chapter 3 in \cite{HTT}), as the “universal recipe” to compute colimits of diagrams in $\infty$-categories. This is witnessed by two results already known in the literature: In \cite{GHN} the authors define \emph{lax colimits} in $\infty$-categories and show in Theorem 7.4 that given a functor $F$ with values in $\infty$-categories its associated coCartesian fibration computes the lax colimit. Let us remark that our definition in the minimally marked case $\C^{\flat}$ particularizes to that of Gepner et al. If we focus our attention on the maximally marked case $\C^{\sharp}$, we note that Corollary 3.3.4 in \cite{HTT} shows that the colimit of a functor $F$ with values in $\infty$-categories can be computed as the $\infty$-localization of its associated coCartesian fibration at the collection of coCartesian edges. 

Therefore, we observe that the two extremal cases $\C^{\flat}$ resp. $\C^{\sharp}$, are already well understood. It is then natural to ask ourselves if the is a generalization of both results which can be seen through the lens of marked colimits. We provide an affirmative answer to this question in the following result.

\begin{theorem}\label{thm:grothendieckcomputesintro}
	Let $\C^{\dagger}$ be a marked $\infty$-category.  Given $\func{F: \C \to \goth{Cat}_{\infty}}$ there is a equivalence of $\infty$-categories
	\[
		 \on{L}_W \left (\on{Un}^{\on{co}}_{\C}(F)^{\natural(\dagger)}\right) \isom \colimdag_{\C}F
	\]
	where $ \on{L}_W \left (\on{Un}^{\on{co}}_{\C}(F)^{\natural(\dagger)}\right)$ denotes the $\infty$-localization at the collection of coCartesian edges lying over marked edges of $\C^{\dagger}$.
\end{theorem}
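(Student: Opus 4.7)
The plan is to interpolate between the two known extremal cases $\C^{\flat}$ (Gepner-Haugseng-Nikolaus' lax colimit, Theorem 7.4 of \cite{GHN}) and $\C^{\sharp}$ (Corollary 3.3.4 of \cite{HTT}) by exploiting the weighted colimit description of $\colimdag$. By construction, $\colimdag_{\C}F \simeq \mathfrak{C}^{\dagger}_{\C/}\tensor F$, where the weight is $c\mapsto \on{L}_W(\C^{\dagger}_{c/})$; in the minimally marked case this degenerates to $c\mapsto \C_{c/}$, so there is a natural pointwise $\infty$-localization of weights $\mathfrak{C}^{\flat}_{\C/}\Rightarrow \mathfrak{C}^{\dagger}_{\C/}$.

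First I would dispose of the case $\dagger=\flat$. Combining Theorem 7.4 of \cite{GHN} with the observation (recalled in the introduction) that our minimally marked colimit specializes to theirs, one obtains an equivalence $\colimflat_{\C}F\simeq \on{Un}^{\on{co}}_{\C}(F)$. Since $\natural(\flat)$ consists of degenerate edges only, no localization is needed and the theorem holds on the nose in this case.

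For general $\dagger$, I would use functoriality of the weighted tensor $W\mapsto W\tensor F$ applied to the map of weights above to produce a comparison morphism $\colimflat_{\C}F\to \colimdag_{\C}F$ in $\goth{Cat}_{\infty}$. The key claim is twofold: (i) this comparison is itself an $\infty$-localization, and (ii) under the identification $\colimflat_{\C}F\simeq \on{Un}^{\on{co}}_{\C}(F)$, the edges being inverted are precisely the coCartesian lifts of marked edges, i.e.\ the marking $\natural(\dagger)$. Claim (ii) amounts to a direct fiberwise check: a coCartesian edge over $u\in\C$ represents, in the unstraightening, the value of $F(u)$, and inverting it is exactly what pointwise localization of $\C^{\dagger}_{c/}$ at marked edges enforces.

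The main obstacle is claim (i): showing that tensoring against a pointwise localization of weights produces a localization of the resulting object in an $\infty$-bicategory. I would verify this via the 2-dimensional universal property. Using Proposition \ref{prop:natMO}, a cone for $\colimdag_{\C}F$ with vertex $b\in\goth{Cat}_{\infty}$ is precisely a cone for $\colimflat_{\C}F$ whose structure 2-cells $\theta_u$ are invertible whenever $u$ is marked. Transporting this description along $\colimflat_{\C}F\simeq \on{Un}^{\on{co}}_{\C}(F)$, invertibility of $\theta_u$ corresponds to the coCartesian lift of $u$ being sent to an equivalence in $b$, which is exactly the defining universal property of $\on{L}_W\bigl(\on{Un}^{\on{co}}_{\C}(F)^{\natural(\dagger)}\bigr)$. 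As a sanity check, specializing to $\dagger=\sharp$ recovers Corollary 3.3.4 of \cite{HTT} together with Theorem \ref{thm:sharpintro}, confirming the correct interpolation between the two extremal cases.
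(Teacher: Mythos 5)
Your strategy is sound, but your decomposition differs from the paper's. The paper's proof of \autoref{thm:grothendieckcomputes} does not reduce to the minimally marked case: it computes the corepresented functor $\Fun\left(\on{L}_W\left(\on{Un}^{\on{co}}_{\C}(F)^{\natural(\dagger)}\right),\X\right)$ directly and uniformly in the marking, via the chain $\Fun^{\dagger}(\EuScript{F},\X)\simeq\Fun^{\dagger}_{\C}(\C,\Phi^{\EuScript{F}}_{\X})\simeq\Fun^{\on{cart},\dagger}_{\C}(\mathfrak{F}(\C),\Phi^{\EuScript{F}}_{\X})\simeq\Fun^{\on{cart}}_{\C}\left(\on{L}^{\C}_W(\mathfrak{F}(\on{id}_{\C})^{\dagger}),\Phi^{\EuScript{F}}_{\X}\right)$, then \autoref{prop:phiclassifies}, \autoref{prop:cartnat} and \autoref{cor:colimtw}, concluding by Yoneda. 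You instead take GHN's Theorem 7.4 as a black box for $\C^{\flat}$ and argue that the comparison map induced by the pointwise localization of weights $\mathfrak{C}^{\flat}_{\C/}\Rightarrow\mathfrak{C}^{\dagger}_{\C/}$ is itself a localization at the marked coCartesian edges. Your route makes the slogan \enquote{marked colimit $=$ localization of the lax colimit} into the proof itself, which is conceptually appealing; the paper's route never passes through the flat case and so never has to identify essential images across GHN's equivalence. The one place you understate the difficulty is your claim (ii), the \enquote{direct fiberwise check}: matching the subcategory of functors on $\on{Un}^{\on{co}}_{\C}(F)$ that invert marked coCartesian lifts with the subcategory of natural transformations out of $\mathfrak{C}^{\flat}_{\C/}$ whose marked components are invertible requires tracing the marking through all of the equivalences above. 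The first matching ($\Fun^{\dagger}(\EuScript{F},\X)\simeq\Fun^{\dagger}_{\C}(\C,\Phi^{\EuScript{F}}_{\X})$, cf.\ \autoref{lem:markedstep1}) is indeed immediate from the description of Cartesian edges of $\Phi^{\EuScript{F}}_{\X}$, but the second is exactly \autoref{lem:subcart}, whose proof is a genuine two-horn argument showing that a Cartesian functor out of the free fibration whose restriction along the unit sends marked edges to Cartesian edges already sends every marked edge of $\mathfrak{F}(\C)^{\dagger}$ to a Cartesian edge. That lemma is the technical heart of the theorem in either formulation, so your argument is complete only once you supply it (or an equivalent); everything else in your outline, including the full faithfulness of $\on{Nat}(\mathfrak{C}^{\dagger}_{\C/},G)\to\on{Nat}(\mathfrak{C}^{\flat}_{\C/},G)$ via the $\on{Tw}(\C)^{\op}$-limit formula of \autoref{prop:natMO}, goes through as you describe.
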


\medskip
\noindent
{\textbf{A cofinality criterion: Theorem A\textsuperscript{$\dagger$}.}} 
Let $\func{f:\C^{\dagger} \to \D^{\dagger}}$ be a marking-preserving functor. We call $f$ a \emph{marked cofinal functor} if for every diagram $\func{F:\D \to \mathbb{B}}$ the canonical comparison map\footnote{see \autoref{sec:cofinality} for a precise definition of the comparison map.}
\[
	\func{\colimdag\limits_{\C}F \circ f \to[\simeq] \colimdag\limits_{\D}F}
\]
is an equivalence in $\mathbb{B}$. The main result of this work is a characterization of this higher notion of cofinality.
\begin{theorem}\label{thm:markedcofintro}
	Let $\func{f:\C^{\dagger} \to \D^{\dagger}}$ be a marking-preserving functor of $\infty$-categories. Then $f$ is marked cofinal if and only if for every $d \in \D$ the canonical map $\func{\C_d^{\dagger} \to \D^{\dagger}_{d}}$ is induces an equivalence on localized $\infty$-categories,
	\[
		\func{\on{L}_W\left(\C_{d/}^{\dagger}\right) \to[\simeq] \on{L}_W\left(\D_{d/}^{\dagger}\right)}.
	\]
\end{theorem}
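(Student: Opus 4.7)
The plan is to translate marked cofinality of $f$ into a pointwise statement about the weight functors $\mathfrak{C}^{\dagger}_{\C/}$ and $\mathfrak{C}^{\dagger}_{\D/}$ from Section 4, and then to identify that pointwise statement with the slice-localization condition in the theorem. Recall that $\colimdag_{\D}F = \mathfrak{C}^{\dagger}_{\D/}\tensor F$ while $\colimdag_{\C}(F\circ f) = \mathfrak{C}^{\dagger}_{\C/}\tensor (F\circ f) \simeq (f_!\mathfrak{C}^{\dagger}_{\C/})\tensor F$, where $f_!$ is left Kan extension along $f^{\op}\colon \C^{\op}\to \D^{\op}$. By the universal property of weighted colimits, $f$ is marked cofinal iff the natural comparison $f_!\mathfrak{C}^{\dagger}_{\C/}\to \mathfrak{C}^{\dagger}_{\D/}$ is an equivalence in $\Fun(\D^{\op},\goth{Cat}_{\infty})$, which can be checked pointwise in $d\in \D$.

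For necessity ($\Rightarrow$), it suffices to work in $\mathbb{B} = \goth{Cat}_{\infty}$ and apply marked cofinality to the corepresentable $y^d \coloneqq \Hom_{\D}(d,-)$. Its coCartesian unstraightening is the slice $\D_{d/}$, that of $y^d\circ f$ is the comma category $\C\times_{\D}\D_{d/} = \C_{d/}$, and in both cases the marking $\natural(\dagger)$ restricts to the marking inherited from $\D^{\dagger}$, respectively $\C^{\dagger}$. \autoref{thm:grothendieckcomputesintro} then identifies $\colimdag_{\D}y^d \simeq \on{L}_W(\D^{\dagger}_{d/})$ and $\colimdag_{\C}(y^d\circ f) \simeq \on{L}_W(\C^{\dagger}_{d/})$, so marked cofinality yields exactly the claimed equivalence. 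For sufficiency ($\Leftarrow$), the pointwise left Kan extension formula gives
\[
(f_!\mathfrak{C}^{\dagger}_{\C/})(d) \;\simeq\; \colim_{(c,\alpha)\in \C_{d/}} \on{L}_W(\C^{\dagger}_{c/}).
\]
The functor $(c,\alpha)\mapsto \C^{\dagger}_{c/}$ on $\C_{d/}$ has coCartesian unstraightening equivalent to $\C_{d/}$ itself (a marked enhancement of the classical collapse of iterated slices), so a second application of \autoref{thm:grothendieckcomputesintro} identifies the colimit on the right-hand side with $\on{L}_W(\C^{\dagger}_{d/})$. The hypothesis then upgrades this to $\on{L}_W(\D^{\dagger}_{d/}) = \mathfrak{C}^{\dagger}_{\D/}(d)$, completing the proof.

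The main technical obstacle is the iterated-slice identification used in the sufficiency direction: one must verify that the coCartesian unstraightening of $(c,\alpha)\mapsto \C^{\dagger}_{c/}$ over $\C_{d/}$ recovers $\C^{\dagger}_{d/}$ with the correct induced marking, so that the two applications of \autoref{thm:grothendieckcomputesintro} chain compatibly with the canonical comparison maps appearing in the definition of marked cofinality. A subsidiary point is establishing the change-of-weight identity $\mathfrak{C}^{\dagger}_{\C/}\tensor (F\circ f) \simeq (f_!\mathfrak{C}^{\dagger}_{\C/})\tensor F$ in the $\infty$-bicategorical setting, which goes beyond purely $\infty$-categorical coend calculus and requires careful interaction with the $\goth{Cat}_{\infty}$-valued weights.
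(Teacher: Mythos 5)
Your overall skeleton agrees with the paper's: the theorem is proved there by showing (\autoref{prop:af2}) that $f$ is marked cofinal precisely when the comparison $\mathcal{A}_f\colon \mathfrak{C}^{\dagger}_{\D/}\to \mathfrak{D}^{\dagger}_{\D/}$ of Cartesian fibrations over $\D$ is an equivalence, where $\mathfrak{C}^{\dagger}_{\D/}\simeq f_!\mathfrak{C}^{\dagger}_{\C/}$, and then checking this fiberwise. Your necessity argument is essentially the paper's: test against the corepresentables $\on{Map}_{\D}(d,\mathblank)$, identify their coCartesian unstraightenings with the marked slices, and apply \autoref{thm:grothendieckcomputes} via \autoref{cor:square}. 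Your ``subsidiary point'' about the change-of-weight identity is exactly \autoref{prop:leftkan}, which the paper proves through the universal property of free Cartesian fibrations; note that this identity is also what makes the canonical comparison map well defined in the first place, so it cannot be treated as an afterthought.

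The genuine gap is in your sufficiency direction. You compute $(f_!\mathfrak{C}^{\dagger}_{\C/})(d)$ by the pointwise Kan extension formula and then need an identification of the resulting colimit with $\on{L}_W(\C^{\dagger}_{d/})$, which you flag as the main obstacle but do not supply. Three things are missing there: (i) the indexing category for left Kan extension along $f^{\op}$ is $(\C_{d/})^{\op}$, not $\C_{d/}$; (ii) the functor being colimited is $(c,\alpha)\mapsto \on{L}_W(\C^{\dagger}_{c/})$, not $(c,\alpha)\mapsto \C^{\dagger}_{c/}$, so before any iterated-slice collapse can be invoked you must commute the localization past the colimit; (iii) the marked iterated-slice collapse itself is nontrivial and is established nowhere. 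The paper avoids all of this: by \autoref{prop:leftkan} the Cartesian fibration $f_!\mathfrak{C}^{\dagger}_{\C/}$ is the fiberwise localization $\on{L}^{\D}_W\left(\mathfrak{F}(f)^{\dagger}\right)$ of the free fibration $\C\times_{\D}\D^{\Delta^1}$, whose fiber over $d$ is the comma category $\C_{d/}$ with its induced marking, and \autoref{lem:locbasechange} identifies the fiber of the fiberwise localization with $\on{L}_W(\C^{\dagger}_{d/})$ directly by comparing fibrant replacements in the Cartesian and projective model structures --- no colimit formula is needed. Salvaging your route amounts to reproving \autoref{lem:locbasechange} in a harder way; I recommend replacing the pointwise Kan extension computation by the paper's fibrational computation of the fiber.
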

As a corollary we obtain a generalization of Theorem A in \cite{QuillenK} to marked $\infty$-categories.

\begin{corollary}[Theorem A\textsuperscript{$\dagger$}]
	Let $\func{f:\C^{\dagger} \to \D^{\dagger}}$ be a marking-preserving functor of $\infty$-categories. If the canonical map $\func{\on{L}_W\left(\C_{d/}^{\dagger}\right) \to[\simeq] \on{L}_W\left(\D_{d/}^{\dagger}\right)}$ is an equivalence of $\infty$-categories for every $d \in \D$, then the induced functor on $\infty$-localizations
	\[
		\func{f_{W}: \on{L}_W\left(\C^{\dagger}\right) \to[\simeq] \on{L}_W\left(\D^{\dagger}\right)}
	\]
	is an equivalence of $\infty$-categories.
\end{corollary}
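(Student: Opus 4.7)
The plan is to deduce Theorem A\textsuperscript{$\dagger$} from the marked cofinality criterion of \autoref{thm:markedcofintro} by specializing it to a tautological diagram. The hypothesis on slices already implies, via \autoref{thm:markedcofintro}, that $f$ is marked cofinal. Hence, for every functor $F: \D \to \mathbb{B}$ with values in an $\infty$-bicategory $\mathbb{B}$, the canonical comparison map
\[
\colimdag_{\C}(F \circ f) \xrightarrow{\simeq} \colimdag_{\D} F
\]
is an equivalence. It therefore suffices to exhibit a single choice of $F$ for which both sides compute the localizations appearing in the statement.

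The natural choice is the constant diagram $\underline{*}: \D \to \goth{Cat}_{\infty}$ at the terminal $\infty$-category. Its coCartesian unstraightening $\on{Un}^{\on{co}}_{\D}(\underline{*})$ is the identity fibration $\on{id}: \D \to \D$, for which every edge is trivially coCartesian. Consequently, the induced marking $\natural(\dagger)$ appearing in \autoref{thm:grothendieckcomputesintro} reduces to the original marking $\dagger$ on $\D$, so that theorem yields
\[
\colimdag_{\D} \underline{*} \simeq \on{L}_W(\D^{\dagger}).
\]
Precomposition with $f$ produces the constant diagram $\underline{*} \circ f$ on $\C$, whose coCartesian unstraightening is $\on{id}: \C \to \C$; the same argument gives $\colimdag_{\C}(\underline{*} \circ f) \simeq \on{L}_W(\C^{\dagger})$. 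Combining these two equivalences with the comparison map above produces an equivalence $\on{L}_W(\C^{\dagger}) \simeq \on{L}_W(\D^{\dagger})$.

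The final, and main, step is to verify that this equivalence is precisely the functor $f_W$ induced by $f$ on localizations, rather than some abstract equivalence extracted purely from universal properties. I expect this identification to be the principal technical hurdle. It amounts to a naturality statement for the equivalence of \autoref{thm:grothendieckcomputesintro}: it should be compatible with pullback of coCartesian fibrations, and in particular with the cartesian square exhibiting $\on{id}_{\C}$ as the pullback of $\on{id}_{\D}$ along $f$. Under this naturality, the canonical comparison map provided by marked cofinality is identified with $f_W$, completing the proof.
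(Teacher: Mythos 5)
Your proposal is correct and follows essentially the same route as the paper: the paper likewise applies \autoref{thm:grothendieckcomputes} to the constant terminal-valued diagram to identify $\on{L}_W(\C^{\dagger})$ and $\on{L}_W(\D^{\dagger})$ with the two marked colimits, and the naturality issue you flag as the main hurdle (identifying the abstract comparison equivalence with $f_W$) is exactly the content of \autoref{cor:square}, whose left vertical map is pullback along $f$ and hence, for the identity fibrations, is $f$ itself.
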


In previous work, (see \cite{AGS} for general background and notation) we generalized Quillen's Theorem A to (strict) marked 2-categories.
\begin{theorem}\label{thm:strict2catA}
	Let $\func{F:\CC^\dagger \to \DD^\dagger}$ be a functor of marked 2-categories. Suppose that,
	\begin{enumerate}
		\item\label{asump:init} For every object $d\in \DD$, there exists a morphism $\func{g_d: d\to F(c)}$ which is initial in both $\on{L}_{W}(\CC_{d\downslash}^\dagger)$ and $\on{L}_{W}(\DD^\dagger_{d\downslash})$.
		\item\label{assump:markedinit} Every marked morphism \begin{tikzcd}
			d \arrow[r,circled] & F(c)
		\end{tikzcd}
		is initial in $\on{L}_{W}(\CC_{d\downslash}^\dagger)$. 
		\item\label{asump:presundermarked}  For any marked morphism \begin{tikzcd}f:b\arrow[r,circled] & d\end{tikzcd} in $\DD$, the induced functors $\func{f^\ast: \on{L}_{W}(\CC^\dagger_{d\downslash}) \to \on{L}_{W}(\CC^\dagger_{b\downslash})}$ preserve initial objects.
	\end{enumerate}
	Then the induced functor $\func{F_{W}: \on{L}_{W}(\Nerv_2(\CC^\dagger))\to \on{L}_{W}(\Nerv_2(\DD^\dagger))}$ is an equivalence of $\infty$-categories. 
\end{theorem}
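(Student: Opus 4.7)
The strategy is to apply the $\infty$-categorical Theorem~A$^\dagger$ (the corollary immediately above) to the 2-nerve $\NN(F)$, and to verify its slice-wise hypothesis from the three assumptions of the current theorem.

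First, I would pass to the marked $\infty$-categorical setting via $\NN$, which sends $F$ to a functor of marked $\infty$-categories $\NN(F)\colon \NN(\CC^\dagger) \to \NN(\DD^\dagger)$. Using the compatibility of $\NN$ with slice constructions established in the companion paper \cite{AGS}, one obtains natural equivalences $\on{L}_W(\NN(\CC^\dagger)_{d/}) \isom \on{L}_W(\CC^\dagger_{d\downslash})$ and analogously for $\DD$. Theorem~A$^\dagger$ then reduces the desired statement about $F_W$ to the assertion that for every $d\in\DD$ the induced functor
\[
G_d\colon \on{L}_W(\CC^\dagger_{d\downslash}) \to \on{L}_W(\DD^\dagger_{d\downslash})
\]
is an equivalence of $\infty$-categories.

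Second, I would verify this slice-wise equivalence by combining the three hypotheses. Hypothesis (1) supplies an object $g_d\colon d \to F(c)$ that is initial in both source and target of $G_d$, and which the functor preserves by construction. Hypothesis (2) ensures that every marked morphism out of $d$ into the image of $F$ is likewise initial in $\on{L}_W(\CC^\dagger_{d\downslash})$, hence canonically equivalent to $g_d$ after localization; this provides a large supply of "structured" objects on the $\CC$-side whose images under $G_d$ are forced to agree. Hypothesis (3) gives the coherence needed to glue these pointwise identifications together: pullback along marked morphisms $f\colon b\to d$ of $\DD$ preserves initial objects on the $\CC$-side, so the local initial-object data assemble into a functorial choice as $d$ varies. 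Taken together, these should permit the construction of an explicit inverse to $G_d$ --- assigning to $h\colon d\to d'$ an appropriate lift furnished by the $g_d$ data via (1), with functoriality provided by (3) --- and a verification that the unit and counit are equivalences via (2).

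The main obstacle is precisely this second step. Preservation of a single initial object is a priori much weaker than an equivalence of $\infty$-categories, and the coherence packed into (2) and (3) must be carefully unwound either to construct the inverse or to directly show that $G_d$ is essentially surjective and fully faithful after localization. In practice this requires a detailed analysis of localized mapping spaces in both slices, likely via a secondary marked-cofinality argument nested inside the outer application of Theorem~A$^\dagger$. Once $G_d$ is shown to be an equivalence for every $d\in\DD$, the conclusion is immediate.
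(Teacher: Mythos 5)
There is no in-paper proof to compare against: the paper recalls this theorem from the companion work \cite{AGS}, where it is proved by a direct argument in the marked model structure, and here it only serves as motivation for the $\infty$-categorical results of \autoref{sec:cofinality}. Judged on its own terms, your proposal has a genuine gap at exactly the point you flag. You propose to feed hypotheses (1)--(3) into Theorem~A\textsuperscript{$\dagger$} by showing that each $\func{G_d: \on{L}_{W}(\CC^{\dagger}_{d\downslash}) \to \on{L}_{W}(\DD^{\dagger}_{d\downslash})}$ is an equivalence, and to obtain this by building an explicit inverse fiber by fiber out of the initial objects. This cannot be done slice by slice: an $\infty$-category with an initial object is not contractible, a functor preserving a single initial object is far from an equivalence, and conditions (2) and (3) only constrain initial objects and their behaviour under pullback along marked edges, so no "gluing of pointwise identifications" controls the mapping spaces of a fixed slice. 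The mechanism that actually upgrades initial-object data to a slicewise equivalence in this paper (\autoref{thm:infty1dagger}) is global rather than fiberwise: one exploits that $\mathfrak{C}^{\dagger}_{\D/}$ and $\mathfrak{D}^{\dagger}_{\D/}$ are fiberwise localizations of \emph{free} Cartesian fibrations, so that maps of Cartesian fibrations out of them are corepresented by sections, via $\Fun^{\on{cart}}_{\D}(\mathfrak{D}^{\dagger}_{\D/},\mathblank)\isom\Fun^{\dagger}_{\D}(\D,\mathblank)$, and sections landing in fiberwise initial objects form a contractible space by \autoref{prop:CartTrivKanFib}. Your sketch never identifies this corepresentability step, which is the entire content of the implication from (1)--(3) to cofinality.

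Even granting that step, there is a second obstruction to your reduction: Theorem~A\textsuperscript{$\dagger$} is proved here for marked $\infty$-categories in the sense of \autoref{def:minftycat}, i.e.\ marked simplicial sets whose underlying simplicial set is an inner Kan complex. For a strict 2-category $\CC$ with non-invertible 2-morphisms, $\Nerv_2(\CC)$ is not an $\infty$-category, and the slices $\CC^{\dagger}_{d\downslash}$ appearing in the hypotheses are 2-categorical lax slices rather than slices of an $\infty$-category; bridging the two settings is precisely the content of \cite{AGS} and of the still-open cofinality conjecture for $\infty$-bicategories mentioned in the introduction. The intended logical direction is thus the reverse of yours: \autoref{thm:infty1dagger} shows that the hypotheses of \autoref{thm:markedcofintro} agree with conditions (1)--(3) in the $\infty$-categorical case, as evidence for the 2-categorical conjecture, not as a route to deriving the 2-categorical theorem from the $\infty$-categorical one.
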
 

We also claimed that the conditions of the previous theorem should control the notion of higher cofinality for $\infty$-bicategories. This work can be understood as a partial result towards proving the \emph{cofinality conjecture} as stated in \cite{AGS}. This will be justified in  \autoref{thm:infty1dagger} where we show that the hypotheses of \autoref{thm:markedcofintro} are equivalent to the analogous conditions of \autoref{thm:strict2catA} for $\infty$-categories.

\subsection*{Acknowledgements}

I would like to thank Tobias Dyckerhoff for the guidance and support offered throughout this process. It is also a pleasure to thank Walker Stern for all the careful revisions of the draft and the many improvements suggested. The author acknowledges the support of the VolkswagenStiftung through the Lichtenberg
Professorship Programme.

\newpage

\section{Preliminaries}

In this section, we collect notation, definitions and background necessary for our constructions and proofs. We assume basic familiarity with the theory of $\infty$-categories as in \cite{Cisinski} and \cite{HTT}. We will use scaled simplicial sets as a model for $\infty$-bicategories following \cite{LurieGoodwillie}. We refer the reader to \cite{Kelly} for the basics of enriched category theory including weighted (co)limits.

\begin{notation}
	We will denote ordinary strict 1-categories by capital letters $(A,B,C)$ and $\infty$-categories by caligraphic letters $(\scr{A},\scr{B},\scr{C})$. We will generally (see \autoref{rem:cat} below for an exception) denote $\infty$-bicategories by boldface letters $(\mathbb{A},\mathbb{B},\mathbb{C})$.
\end{notation}

\begin{remark}\label{rem:cat}
	Following the previous convention we will denote by $\Cat_{\infty}$ the full subcategory of the 1-category of simplicial sets consisting of $\infty$-categories. We will use the notation $\iCat_{\infty}$ to denote the $\infty$-category of $\infty$-categories. We will denote the $\infty$-bicategory of $\infty$-categories by $\goth{Cat}_{\infty}$.
\end{remark}

\begin{notation}
	We will use extensively  marked simplicial sets as a model for $\infty$-categories. Given a simplicial set $X$ we denote by $X^{\flat} \in \msSet$ the marked simplicial set with only degenerate edges marked and by $X^{\sharp}$ the marked simplicial set with \emph{all} edges marked.
\end{notation}

\begin{notation}
	Given a simplicial set $X$ we denote by $X_{\flat} \in \on{Set}_{\Delta}^{\on{sc}}$ the scaled simplicial set with only degenerate 2-simplices being thin and by $X_{\sharp}$ the scaled simplicial set with \emph{all} 2-simplices being thin. We will identify $\infty$-categories with maximally scaled simplicial sets that are fibrant in the bicategorical model structure.
\end{notation}

\begin{notation}
	Given an $\infty$-bicategory $\mathbb{B}$ and objects $x,y \in \mathbb{B}$, we denote by $\mathbb{B}(x,y) \in \goth{Cat}_{\infty}$ the mapping category. For every $y \in \mathbb{B}$ we denote by $\mathbb{B}(\mathblank,y)$ the functor
	\[
		\func{\mathbb{B}^{\op} \to \goth{Cat}_{\infty};x \mapsto \mathbb{B}(x,y) .}
	\]
\end{notation}

\begin{notation}
	Given $\infty$-bicategories $\mathbb{B},\mathbb{C}$ and functors $\func{F,G: \mathbb{C} \to \mathbb{B}}$ we denote by $\on{Nat}_{\mathbb{B}}(F,G)$ the mapping category of $\on{Fun}\left(\mathbb{C},\mathbb{B}\right)$. We denote by $\on{Nat}_{\mathbb{B}}^{\simeq}(F,G)$ the underlying Kan complex.
\end{notation}

\begin{notation}
	Given an $\infty$-bicategory $\mathbb{B}$ we denote by $\mathcal{Y}_{\mathbb{B}}$ the $\infty$-bicategorical Yoneda embedding
	\[
		\func{\mathbb{B} \to \on{Fun}\left(\mathbb{B}^{\op},\goth{Cat}_{\infty}\right); y \mapsto \mathbb{B}(\mathblank,y).}
	\]
\end{notation}

\subsection{Marked $\infty$-categories and localizations}

\begin{definition}\label{def:minftycat}
	Let $\func{U:\msSet \to \on{Set}_{\Delta}}$ be the forgetful functor. We define $\Cat_{\infty}^{\dagger}$ (see \autoref{rem:cat}) as the full subcategory of $\msSet$ on those objects $X^{\dagger} \in \msSet$ such that $U(X^{\dagger})$ is an $\infty$-category. We call the objects of $\Cat_{\infty}^{\dagger}$ \emph{marked $\infty$-categories} and its morphisms \emph{marked functors}.
\end{definition}

\begin{definition}
	Let $\func{f: \C^{\dagger}\to \D^{\dagger}}$ be a marked functor. Given $d \in \D$ we define a marking on $\C_{d/}$ by declaring an edge $\func{\sigma: \Delta^1 \to \C_{d/}}$ to be marked if an only if its image under the canonical map is marked in $\C^{\dagger}$. We denote this marked simplicial set by $\C^{\dagger}_{d/} \in \msSet$.
\end{definition}

\begin{notation}
	Given a marked $\infty$-category $\C^{\dagger}$ we denote its $\infty$-categorical localization with respect to its marked edges by $\on{L}_{W}(\C^{\dagger})$. Given  $\EuScript{X}\in \iCat_{\infty}$ we define $\Fun^{\dagger}(\C, \EuScript{X})$ to be the full subcategory of $\Fun(\C,\X)$ on those functors mapping marked edges of $\C$ to equivalences in $\X$.
\end{notation}

\begin{remark}
	Note that the universal property of localizations implies that we have an equivalence of $\infty$-categories $\Fun^{\dagger}(\C, \EuScript{X})\isom \Fun\left(\on{L}_W (\C^{\dagger}), \EuScript{X}\right)$. We remind the reader that a model for the localization of $\C^{\dagger}$ is given by its fibrant replacement in the model structure on marked simplicial sets.
\end{remark}

\subsection{Free fibrations}
In this section we review the main results of \cite{GHN} regarding free Cartesian fibrations and their relation to marked $\infty$-categories.
\begin{definition}
	Given $\C \in \iCat_{\infty}$ let $\iCat_{\infty/ \C}^{\on{cart}}$ be the subcategory of the undercategory $\iCat_{\infty / \C}$, whose objects are Cartesian fibrations, and whose morphisms are functors which preserve Cartesian morphisms.
\end{definition}

\begin{remark}
	There is an obvious forgetful functor $\func{\mathfrak{U}: \iCat_{\infty/ \C}^{\on{cart}} \to \iCat_{\infty / \C} }$.
\end{remark}

\begin{notation}
	Given an $\infty$-category $\C$ and two Cartesian fibrations
	\[
		\func{ \X \to \C}, \enspace \func{\EuScript{Y} \to \C}
	\] 
	we denote by $\Fun^{\on{cart}}_{\C}(\X,\EuScript{Y})$ the full subcategory of functors over $\C$ that preserve Cartesian morphisms.
\end{notation}

\begin{definition}\label{def:freefib}
	Let $\C$ be an $\infty$-category. For $\func{p: \EuScript{E} \to \C}$ any functor of $\infty$-categories, let $\func{\mathfrak{F}(p): \mathfrak{F}(\E) \to \C}$ denote the map $\func{\E \times_{\C} \C^{\Delta^{1}} \to \C}$, where the pullback is along the target fibration $\func{\C^{\Delta^{1}} \to \C}$ given by evaluation at $1 \in \Delta^1$, and the projection $\mathfrak{F}(p)$ is induced by evaluation at $0$. We call the projection map $\func{\E \times_{\C} \C^{\Delta^{1}} \to \C}$ the \emph{free Cartesian fibration} on $p$. The Cartesian edges of $\mathfrak{F}(p)$ are precisely those which are mapped to equivalences under the projection to $\E$. Then $\mathfrak{F}$ defines a functor
	\[
		\func{\mathfrak{F}: \iCat_{\infty / \C}  \to \iCat_{\infty/ \C}^{\on{cart}}. }
	\]
\end{definition}

\begin{remark}
	Composition with the degeneracy map $\func{s_0: \Delta^1 \to \Delta^0}$ induces a functor $\func{\C \to \C^{\Delta^1}}$ which is a section to both of the evaluation maps. Given a functor $\func{\E \to \C}$, this section gives a natural map
	\[
		\func{ \E \to \E \times_{\C} \C^{\Delta^1}}
	\]
	over $\C$, inducing a unit natural transformation $\func{\eta:\on{id} \nat \mathfrak{U} \circ \mathfrak{F}}$.
\end{remark}

\begin{proposition}\label{prop:free2}
	Given a map of $\infty$-categories $\func{\E \to \C}$ then the unit natural transformation $\func{\E \to \mathfrak{F}(\E)}$ induces an equivalence of $\infty$-categories
	\[
		\func{\Fun^{\on{cart}}_{\C}(\mathfrak{F}(\E),\EuScript{X}) \to[\simeq] \Fun_{\C}(\E,\mathfrak{U}(\EuScript{X}))}
	\]
\end{proposition}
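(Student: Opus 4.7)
The natural restriction map $\eta_\E^{\ast}$ is already in hand; the content of the statement is that it admits an inverse. The plan is to construct this inverse explicitly, exploiting the Cartesian fibration structure of $\X\to \C$.

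Concretely, since $\X \to \C$ is a Cartesian fibration, there is a canonical ``Cartesian lift'' functor
\[
 \func{\mu_{\X}\colon \X \times_{\C} \C^{\Delta^{1}} \to \X},
\]
sending a pair $(x,\alpha\colon c\to p(x))$ to the source of a Cartesian lift of $\alpha$ ending at $x$. This is a functor over $\C$ (via evaluation at $0$ on the right-hand factor) and it fits in a triangle with the unit $\X \to \X\times_{\C}\C^{\Delta^{1}}$, satisfying $\mu_{\X}\circ \eta_{\X}\isom \on{id}_{\X}$. Given $\func{g\colon \E \to \mathfrak{U}(\X)}$ over $\C$, define
\[
 \widetilde{g}\colon \mathfrak{F}(\E)=\E\times_{\C}\C^{\Delta^{1}} \xrightarrow{g\times_{\C}\on{id}} \X\times_{\C}\C^{\Delta^{1}} \xrightarrow{\mu_{\X}} \X.
\]
A direct check shows $\widetilde{g}$ is a functor over $\C$, that it preserves Cartesian edges (an edge in $\mathfrak{F}(\E)$ is Cartesian precisely when its $\E$-component is an equivalence, and such edges are sent by $g\times_{\C}\on{id}$ into edges that $\mu_{\X}$ takes to Cartesian morphisms in $\X$), and that $\widetilde{g}\circ \eta_{\E}\isom g$ because $\mu_{\X}$ applied to an identity recovers the original object.

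Conversely, given a Cartesian-edge-preserving functor $\func{h\colon \mathfrak{F}(\E)\to \X}$ over $\C$, one recovers it from $h\circ \eta_{\E}$ via the universal property of Cartesian edges: for each $(e,\alpha\colon c\to p(e))$ there is a canonical Cartesian edge in $\mathfrak{F}(\E)$ from $(e,\alpha)$ to $(e,\on{id}_{p(e)})$ obtained by taking the square with right and bottom edges the identity on $p(e)$, top edge $\alpha$ and identity on the $\E$-coordinate. Since $h$ preserves Cartesian edges, $h$ must send this to a Cartesian edge of $\X$ over $\alpha$ ending at $h(\eta_{\E}(e))=g(e)$, and such an edge is unique up to contractible choice, which forces $h\isom \widetilde{h\circ\eta_{\E}}$. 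This provides the unit and counit equivalences for the adjunction at the level of the underlying Kan complexes.

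To promote this to an equivalence of $\infty$-categories $\Fun^{\on{cart}}_{\C}(\mathfrak{F}(\E),\X)\simeq \Fun_{\C}(\E,\mathfrak{U}(\X))$, one applies the same construction internally: the assignments $\E\mapsto \mathfrak{F}(\E)$ and $\X\mapsto \mu_{\X}$ are natural and compatible with cotensoring by an arbitrary $\infty$-category $\mathcal{K}$, so the arguments above, applied to $\Fun(\mathcal{K},-)$, identify the $\mathcal{K}$-points of both sides. The Yoneda lemma then promotes the bijection on $\pi_{0}$ of mapping spaces to the desired equivalence. The main technical obstacle is the coherent construction of the Cartesian-lift functor $\mu_{\X}$; this is where one has to invoke the straightening/unstraightening machinery (or, equivalently, the fact that the target fibration $\C^{\Delta^{1}}\to \C$ represents the ``arrows into $(-)$'' functor on Cartesian fibrations) in order to avoid making incoherent point-set choices.
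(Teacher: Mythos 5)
Your plan is sound, but note that the paper does not prove this statement at all: its ``proof'' is a one-line citation to Proposition 4.11 of \cite{GHN}. What you have written is essentially a reconstruction of the argument given there: GHN also produce the inverse by means of a coherent Cartesian-lift functor $\X\times_{\C}\C^{\Delta^{1}}\to\X$ (your $\mu_{\X}$), verify the triangle identities using the canonical Cartesian edge $(e,\alpha)\to(e,\on{id}_{p(e)})$ exactly as you describe, and then upgrade the space-level statement to the functor categories. The one place where your sketch is vaguer than necessary is the construction of $\mu_{\X}$: no straightening is needed, since the map from the full subcategory of $\Fun(\Delta^{1},\X)$ spanned by the Cartesian edges to $\X\times_{\C}\C^{\Delta^{1}}$ (recording the target and the underlying arrow in $\C$) is a trivial fibration, and $\mu_{\X}$ is obtained by choosing a section and evaluating at $0$; this also gives the contractible space of choices you invoke in the converse direction. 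With that substitution your outline is a complete and correct proof strategy, matching the cited source.
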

\begin{proof}
	This is Proposition 4.11 in \cite{GHN}.
\end{proof}

\begin{definition}\label{def:freemarked}
	Let $\func{p:\E \to \C}$ be a functor of $\infty$-categories. Suppose that $\E$ is a marked $\infty$-category and denote it by $\E^{\dagger}$. We declare an edge of $\E \times_{\C} \C^{\Delta^1}$ to be marked if and only if its projection to $\E$ is marked. We denote this marked $\infty$-category over $\C$ by $\mathfrak{F}(\E)^{\dagger}$. 
\end{definition}

\begin{remark}
	Observe that in the previous definition we can identify the fibers $\mathfrak{F}(\E)^{\dagger} \times_{\set{c}} \C$ with the marked slice $\E_{c/}^{\dagger}$ where we declare an edge marked if and only if it is marked in $\E^{\dagger}$.
\end{remark}

\begin{definition}
	Let $\func{p: \E \to \C}$ be a functor of $\infty$-categories and assume further that $\E$ is a marked $\infty$-category. Given a Cartesian fibration $\func{\X \to \C}$ we define $\Fun^{\dagger}_{\C}(\E,\X)$ to be the full subcategory on those functors mapping marked edges in $\E^{\dagger}$ to Cartesian morphisms in $\X$. If $p$ is a Cartesian fibration we define $\Fun^{\on{cart},\dagger}_{\C}(\E,\X)$ to be the full subcategory of $\Fun^{\dagger}_{\C}(\E,\X)$ on those functors which also preserve the Cartesian edges of $\E$.
\end{definition}

\begin{lemma}\label{lem:subcart}
	Let $\E^{\dagger}$ be a marked $\infty$-category together with a functor $\func{p:\E \to \C}$. Consider $\mathfrak{F}(\E)^{\dagger}$ as in \autoref{def:freemarked}. Then the unit map $\eta$ induces a commutative diagram in $\iCat_{\infty}$ 
	\[
		\begin{tikzcd}[ampersand replacement=\&]
			\Fun^{\on{cart},\dagger}_{\C}(\mathfrak{F}(\E),\X) \arrow[r,"\simeq"] \arrow[d] \& \Fun^{\dagger}_{\C}(\E,\mathfrak{U}(\X)) \arrow[d] \\
			\Fun^{\on{cart}}_{\C}(\mathfrak{F}(\E),\X) \arrow[r,"\simeq"] \& \Fun_{\C}(\E,\mathfrak{U}(\X)) 
		\end{tikzcd}
	\]
	where the vertical maps are fully faithful and the horizontal maps are equivalences of $\infty$-categories.
\end{lemma}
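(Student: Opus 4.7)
The plan is to derive the lemma directly from \autoref{prop:free2} by verifying that the bottom equivalence restricts to one between the full subcategories appearing on top. The vertical arrows are inclusions of full subcategories, hence automatically fully faithful, and commutativity of the square is tautological. It therefore suffices to check that, under the equivalence $F \mapsto F \circ \eta$, a Cartesian-preserving functor $F$ sends marked edges of $\mathfrak{F}(\E)^{\dagger}$ to Cartesian edges in $\X$ if and only if $F \circ \eta$ sends marked edges of $\E^{\dagger}$ to Cartesian edges in $\X$.

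The $(\Rightarrow)$ direction is immediate from \autoref{def:freemarked}: for $f$ marked in $\E^{\dagger}$, the edge $\eta(f)$ of $\mathfrak{F}(\E)$ projects to $f$ in $\E$, so it is marked in $\mathfrak{F}(\E)^{\dagger}$ and is therefore sent by $F$ to a Cartesian edge.

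The substantive direction is $(\Leftarrow)$. Let $g\colon X \to Y$ be a marked edge of $\mathfrak{F}(\E)^{\dagger}$, with $X = (e, u \colon a \to p(e))$, $Y = (e', u' \colon a' \to p(e'))$, projection $\alpha\colon a \to a'$ in $\C$, and projection $f\colon e \to e'$ in $\E$, which is marked by hypothesis. Using that $\mathfrak{F}(p)$ is a Cartesian fibration whose Cartesian edges are exactly those projecting to equivalences in $\E$, choose Cartesian lifts $\tau_X\colon X \to \eta(e)$ of $u$ and $\tau_Y\colon Y \to \eta(e')$ of $u'$. The defining square $u' \circ \alpha = p(f) \circ u$ of $g$, combined with the degenerate $2$-simplex on $f$ in $\E$, assembles into a commutative square
\[
\begin{tikzcd}
X \arrow[r,"g"] \arrow[d,"\tau_X"'] & Y \arrow[d,"\tau_Y"] \\
\eta(e) \arrow[r,"\eta(f)"'] & \eta(e')
\end{tikzcd}
\]
in $\mathfrak{F}(\E)$. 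After applying $F$, the edges $F(\tau_X)$ and $F(\tau_Y)$ are Cartesian because $F$ preserves Cartesian edges, and $F(\eta(f))$ is Cartesian by the hypothesis on $F \circ \eta$. Hence the composite $F(\tau_Y) \circ F(g) = F(\eta(f)) \circ F(\tau_X)$ is itself Cartesian, and the standard two-out-of-three property for Cartesian morphisms (Proposition~2.4.1.7 in \cite{HTT}) applied to the $2$-simplex $X \to Y \to \eta(e')$ forces $F(g)$ to be Cartesian.

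The main obstacle is producing the commutative square above cleanly inside the pullback $\mathfrak{F}(\E) = \E \times_{\C} \C^{\Delta^1}$, which concretely amounts to specifying a pair of compatible $2$-simplices---one in $\E$ (the degeneracy of $f$), and a prism in $\C$ encoding the Cartesian lifts of $u$ and $u'$ together with the defining square of $g$. Once this square is in hand, the remainder is the formal Cartesian two-out-of-three argument outlined above.
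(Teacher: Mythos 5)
Your proposal is correct and follows essentially the same route as the paper: reduce to \autoref{prop:free2}, identify the essential image, and for the substantive direction use the Cartesian edges of $\mathfrak{F}(\E)$ projecting to identities in $\E$ together with $\eta(f)$ and the cancellation property of Proposition~2.4.1.7 in \cite{HTT} to deduce that $F(g)$ is Cartesian. The paper packages the same data as two $\Lambda^2_1$-horns with homotopic composites rather than your single commuting square, but the argument is identical in substance.
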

\begin{proof}
	The vertical maps are fully faithful by defintion. Since the bottom horizontal map is an equivalence by \autoref{prop:free2} it will suffice compute its essential image when restricted to $\Fun^{\on{cart},\dagger}_{\C}(\mathfrak{F}(\E),\X)$. It is clear that the image of this restricted map lands in $\Fun^{\dagger}_{\C}(\E,\mathfrak{U}(\X))$. Now suppose that we are given a functor of Cartesian fibrations $\func{L:\mathfrak{F}(\E) \to \X}$ such that its image under the unit map lands in $\Fun^{\dagger}_{\C}(\E,\mathfrak{U}(\X))$. Consider a marked edge in $\func{\sigma: \Delta^1 \to \mathfrak{F}(\E)}$ represented by a commutative diagram
	\[
		\begin{tikzcd}[ampersand replacement=\&]
			c \arrow[d,"u"] \arrow[r,"\beta"] \& c' \arrow[d,"v"] \\
			p(e) \arrow[r,circled,"p(\alpha)"] \& p(e')
		\end{tikzcd}
	\]
	where $\func{\alpha: \Delta^1 \to \E}$ is a marked morphism. We claim that $L(\sigma)$ is a Cartesian edge of $\X$. First we observe that we have an inner horn $\func{\theta:\Lambda^2_1 \to \X}$ given by
	\[
		\begin{tikzcd}[ampersand replacement=\&]
			c \arrow[r,"u"] \arrow[d,"u"] \& p(e) \arrow[d] \arrow[r,"p(\alpha)"] \& p(e') \arrow[d] \\
			p(e) \arrow[r,"="'] \& p(e) \arrow[r,circled,"p(\alpha)"] \& p(e').
		\end{tikzcd}
	\]
	Using Proposition 2.4.1.7 in \cite{HTT} we see that since both edges of the horn are Cartesian in $\X$ it follows that any composite of those two edges must be Cartesian in $\X$. We consider another horn $\func{\Xi: \Lambda^2_1 \to \X}$
	\[
		\begin{tikzcd}[ampersand replacement=\&]
			c \arrow[r,"\beta"] \arrow[d,"u"] \& c' \arrow[d,"v"] \arrow[r,"v"] \& p(e') \arrow[d] \\
			p(e) \arrow[r,"p(\alpha)"] \& p(e') \arrow[r,"="'] \& p(e').
		\end{tikzcd}
	\]
	First we observe that the restriction $\func{\Delta^{\{1,2 \}} \to \Lambda^1_2 \to[\Xi] \X}$ is a Cartesian edge and that the restriction of $\Xi$ to $\Delta^{\{0,1 \}}$ is $L(\sigma)$. Finally one notes that any composite of the morphisms in $\theta$ must be homotopic to any composite of the morphisms in $\Xi$. Again by Proposition 2.4.1.7 in \cite{HTT} this implies that $L(\sigma)$ is Cartesian in $\X$.
\end{proof}

\begin{definition}\label{def:fiberwiseloc}
	Let $\func{\pi:\X \to \C}$ be a Cartesian fibration and assume that $\X^{\dagger}$ is a marked $\infty$-category. A \emph{fiberwise localization} of $\pi$ at the collection of marked edges of $\X$ is a Cartesian fibration $\func{\on{L}_W^{\C}(\pi^{\dagger}) \to \C}$ together with a map of Cartesian fibrations $\func{\iota:\X \to \on{L}_W^{\C}(\pi^{\dagger})}$ such that
	\begin{itemize}
		\item The map $\iota$ sends marked edges in $\X$ to Cartesian edges in $\on{L}_W^{\C}(\pi^{\dagger})$.
		\item For any Cartesian fibration $\func{\scr{Y} \to \C}$ the induced functor $\func{\Fun^{\on{cart}}_{\C}(\on{L}_W^{\C}(\pi^{\dagger}),\Y) \to \Fun^{\on{cart},\dagger}_{\C}(\X,\Y)}$ is an equivalence of $\infty$-categories.
	\end{itemize}
\end{definition}

\begin{remark}\label{rem:cartloc}
	Given $\func{\pi:\X \to \C}$ as above let $\X^{\diamond}$ denote the marked simplicial set over $\C$ where an edge is marked if it is Cartesian \emph{or} if it is marked in $\X^\dagger$. Then a fibrant replacement in the model structure for Cartesian fibrations over $\C$ gives a model for $\on{L}_W^{\C}(\pi^{\dagger})$. 
\end{remark}

\begin{lemma}\label{lem:locbasechange}
	Let $\E^{\dagger}$ be a marked $\infty$-category together with a functor $\func{p:\E \to \C}$. Consider $\func{\mathfrak{F}(p)^{\dagger}:\mathfrak{F}(\E)^{\dagger} \to \C}$ as in \autoref{def:freemarked}. Then for every vertex $c \in \C$ there is an equivalence of $\infty$-categories
	\[
		\on{L}_{W}^{\C}\left(\mathfrak{F}(p)^{\dagger} \right)\times_{\C} \set{c} \isom \on{L}_{W}\left(\E^{\dagger}_{c/}\right).
	\]
\end{lemma}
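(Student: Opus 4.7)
The plan is to unwind both sides using the explicit fibrant-replacement model of the fiberwise localization given in \autoref{rem:cartloc}: $\on{L}_{W}^{\C}(\mathfrak{F}(p)^{\dagger})$ is presented by a fibrant replacement, in the Cartesian model structure on $\msSet/\C$, of the marked simplicial set $\mathfrak{F}(\E)^{\diamond}$ whose marked edges are those that are either Cartesian for $\mathfrak{F}(p)$ or marked in $\mathfrak{F}(\E)^{\dagger}$.

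First I would identify the marked simplicial set $\mathfrak{F}(\E)^{\diamond}\times_{\C}\{c\}$. By the remark following \autoref{def:freemarked}, its underlying $\infty$-category is $\E_{c/}=\E\times_{\C}\C_{c/}$. A Cartesian edge of $\mathfrak{F}(p)$ lying in this fiber is precisely an edge whose projection to $\E$ is an equivalence, while a $\dagger$-marked edge in the fiber is one whose projection is marked in $\E^{\dagger}$. Since the pullback of the left fibration $\C_{c/}\to\C$ produces a left fibration $\E_{c/}\to\E$, this projection is conservative, and the equivalences of $\E_{c/}$ are exactly the edges lying over equivalences of $\E$. Consequently the marking on the fiber is $\E_{c/}^{\dagger}$ enlarged by the equivalences of $\E_{c/}$; since any $\infty$-localization already inverts equivalences, this enlargement does not alter the localization, so both yield $\on{L}_{W}(\E^{\dagger}_{c/})$.

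The core of the argument is then to verify that the fiber functor $(\mathblank)\times_{\C}\{c\}$ carries the Cartesian fibrant replacement of $\mathfrak{F}(\E)^{\diamond}$ to the marked fibrant replacement of $\mathfrak{F}(\E)^{\diamond}\times_{\C}\{c\}$. I would deduce this from the universal property in \autoref{def:fiberwiseloc}: given an $\infty$-category $\Y$, apply the universal property to the Cartesian fibration $\C\times\Y\to\C$ (whose Cartesian edges are precisely those with invertible $\Y$-component), then combine \autoref{prop:free2} and \autoref{lem:subcart} to translate Cartesian-and-marking-preserving functors out of $\mathfrak{F}(\E)^{\dagger}$ into marked functors out of $\E^{\dagger}$ over $\C$, and finally restrict to the slice over $c$ to obtain $\Fun^{\dagger}(\E_{c/},\Y)$. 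The main obstacle will be making this string of identifications functorial in $\Y$ so that not merely the underlying sets but the full mapping $\infty$-categories are matched; once this is done, the Yoneda lemma combined with the identification of the previous paragraph yields the desired equivalence.
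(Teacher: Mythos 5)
Your first paragraph is fine and matches what the paper records in the remark after \autoref{def:freemarked}: the fiber of $\mathfrak{F}(\E)^{\diamond}$ over $c$ is $\E_{c/}$ marked by the $\dagger$-edges together with its equivalences, and the latter do not change the localization. The gap is in the step you yourself single out as the core: showing that passing to the fiber commutes with the Cartesian fibrant replacement. The mechanism you propose does not accomplish this. Testing the universal property of \autoref{def:fiberwiseloc} against the constant Cartesian fibration $\C\times\Y\to\C$ and running it through \autoref{prop:free2} and \autoref{lem:subcart} yields $\Fun^{\on{cart}}_{\C}\bigl(\on{L}_W^{\C}(\mathfrak{F}(p)^{\dagger}),\C\times\Y\bigr)\isom\Fun^{\dagger}(\E,\Y)$, i.e.\ it identifies the localization of the \emph{total space} of $\on{L}_W^{\C}(\mathfrak{F}(p)^{\dagger})$ at its Cartesian edges with $\on{L}_W(\E^{\dagger})$. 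No subsequent \enquote{restriction to the slice over $c$} extracts the fiber from this: the fiber over $c$ is computed by mapping \emph{into} the fibration (e.g.\ $\Fun^{\on{cart}}_{\C}(\mathfrak{F}(\set{c}),\mathblank)$ via \autoref{prop:free2}), which is precisely the variance about which the mapping-out universal property is silent. Put differently, a Cartesian equivalence $\mathfrak{F}(\E)^{\diamond}\to\on{L}_W^{\C}(\mathfrak{F}(p)^{\dagger})$ with non-fibrant source need not induce marked equivalences on fibers, so the commutation you need is exactly the content of the lemma and cannot be taken for granted.

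The paper closes this gap by straightening: $\mathfrak{F}(\E)^{\dagger}$ is the unstraightening of the functor $\widetilde{\mathfrak{E}}^{\dagger}_{\C/}$ sending $c$ to $\E^{\dagger}_{c/}$, and under the Quillen equivalence with the projective model structure on $\msSet$-valued functors the fiberwise localization corresponds to a projective --- hence levelwise --- fibrant replacement, whose value at $c$ is a fibrant replacement of $\E^{\dagger}_{c/}$, i.e.\ $\on{L}_W(\E^{\dagger}_{c/})$. If you wish to stay with mapping-out arguments you would have to test against the Cartesian fibration classifying the right Kan extension of $\Y$ along $\set{c}\hookrightarrow\C^{\op}$ rather than the constant functor, which in effect reintroduces the straightening formalism anyway.
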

\begin{proof}
	Let $\mathfrak{E}_{\C/}$ denote the functor classifying $\mathfrak{F}(\E)$. It is not hard to verify that this functor maps each $c\in \C$ to the $\infty$-category $\E_{c/}$ and that its action on morphisms is induced by precomposition in $\C$. Therefore we define $\widetilde{\mathfrak{E}}_{\C/}^{\dagger}$ to be the functor sending each $c$ to $\E^{\dagger}_{c/}$. It is clear that we have an equivalence of Cartesian fibrations
	\[
		\on{L}_{W}^{\C}\left(\mathfrak{F}(p)^{\dagger}\right) \simeq \on{L}_W^{\C}\left(\on{Un}_{\C}\left(\widetilde{\mathfrak{E}}_{\C/}^{\dagger}\right)\right).
	\] 
	In addition, we observe that the right-hand side can be modeled by a fibrant replacement of $\widetilde{\mathfrak{E}}_{\C/}^{\dagger}$ in the projective model structure of $\msSet$-valued functors. This finally implies 
	\[
		\on{L}_W^{\C}\left(\on{Un}_{\C}\left(\widetilde{\mathfrak{E}}^{\dagger}_{\C /}\right)\right)\times_{\C}\set{c} \simeq \on{L}_{W}\left(\E_{c/}^{\dagger}\right) \qedhere
	\]
\end{proof}

\section{Weighted colimits in $\infty$-bicategories}

\begin{definition}
	Let $\func{F: \C \to \mathbb{B}}$ and $\func{W: \C^{\op} \to \goth{Cat}_{\infty}}$. We define a functor as the composite
	\[
		\begin{tikzcd}[ampersand replacement=\&]
			\on{Nat}(W,F^{*}\mathcal{Y}_{\mathbb{B}}): \mathbb{B} \arrow[r,"\mathcal{Y}_{\mathbb{B}}"] \& \goth{Cat}_{\infty}^{\mathbb{B}^{\op}} \arrow[r,"F^{*}"] \& \goth{Cat}_{\infty}^{\C^{\op}} \arrow[rr,"\on{Nat}_{\C}(W{,}\mathblank)"] \& \& \goth{Cat}_{\infty}.
		\end{tikzcd}
	\]
\end{definition}

\begin{definition}
	Let $\func{F: \C \to \mathbb{B}}$ and $\func{W: \C^{\op} \to \goth{Cat}_{\infty}}$. We say that an object $b \in \mathbb{B}$ is the colimit of $F$ weighted by $W$ if there exists an equivalence of functors 
	\[
		\begin{tikzcd}[ampersand replacement=\&]
			\mathbb{B}(b,\mathblank) \arrow[r,Rightarrow,"\simeq"] \& \on{Nat}(W,F^{
			*}\mathcal{Y}_{\mathbb{B}}).
		\end{tikzcd}
	\]
\end{definition}

\begin{remark}
	Since weighted colimits are unique up to equivalence we will often speak of “the weighted colimit” and denote it by $W \tensor F$.
\end{remark}

\begin{definition}
	Let $\C$ be an $\infty$-category. The \emph{twisted arrow $\infty$-category} $\on{Tw}(\C)$ of $\C$ is the simplicial set given by $\Hom_{\on{Set}_{\Delta}}(\Delta^n,\on{Tw}(\C))\isom \Hom_{\on{Set}_{\Delta}}(\Delta^n \star (\Delta^n)^{\op},\C)$. Note that $\on{Tw}(\C)$ comes equipped with a projection functor $\func{\on{Tw}(\C) \to \C \times \C^{\op}}$ which is a right fibration by Proposition 5.2.1.3 in \cite{HA}, so that $\on{Tw}(\C)$ is an $\infty$-category. 
\end{definition}

We present now two of the main results obtained in \cite{AGSb} that will allow us to understand weighted colimits $\goth{Cat}_{\infty}$. The proofs can be found in the aforementioned document as Theorem 4.3 and Theorem 4.28.

\begin{proposition}\label{prop:natMO}
	Let $\C$ be a $\infty$-category and $\DD$ an $\infty$-bicategory. Then for every pair of functors $\func{F,G: \C \to \DD}$  there exists a  equivalence of $\infty$-categories
	\[
		\func{ \on{Nat}_{\C}(F,G) \to \lim_{\on{Tw}(\C)^{\op}}\on{Map}_{\DD}(F(\mathblank),G(\mathblank))}
	\]
	which is natural in each variable.
\end{proposition}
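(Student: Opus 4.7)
The plan is to present both sides as the homotopy end of the two-variable functor $H := \on{Map}_{\DD}(F(-), G(-)): \C^{\op} \times \C \to \goth{Cat}_{\infty}$ and to produce a natural comparison. The right-hand side is, essentially by design, the standard simplicial model for this end: the canonical projection $\on{Tw}(\C) \to \C^{\op} \times \C$ is a right fibration, so restricting $H$ along it and taking the limit over $\on{Tw}(\C)^{\op}$ computes the end in a homotopy-coherent way. Thus what must be produced is a universal comparison map $\on{Nat}_\C(F,G) \to \lim_{\on{Tw}(\C)^{\op}} H$, after which one must verify it is an equivalence of $\infty$-categories, naturally in $F$ and $G$.

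I would first construct the comparison map at the model-theoretic level of scaled simplicial sets. A natural transformation $\eta: F \Rightarrow G$ is modeled by a map out of an appropriately scaled $\C \times \Delta^1$ into $\DD$, yielding components $\eta_c: F(c) \to G(c)$ and naturality 2-cells $\theta_f$ for each arrow $f: c \to c'$. Restricting along a vertex $f \in \on{Tw}(\C)$ yields a well-defined object of $H(c,c') = \on{Map}_{\DD}(F(c), G(c'))$, represented up to equivalence by either composite $G(f) \circ \eta_c$ or $\eta_{c'} \circ F(f)$ glued via $\theta_f$, and these assemble coherently as $f$ varies over the higher simplices of $\on{Tw}(\C)$. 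The universal property of the limit then produces the desired comparison functor.

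To prove the comparison is an equivalence I would reduce via density to the case $\C = \Delta^n$. Both sides transform colimit presentations of $\C$ (in scaled simplicial sets) into limits of $\infty$-categories, so verifying the equivalence on all $\Delta^n$ suffices. For $n=0$ both sides collapse to $\on{Map}_{\DD}(F(0), G(0))$, providing the base case. For $n\geq 1$, one decomposes $\Delta^n$ as a pushout (for instance along the spine or along $\partial\Delta^n$) and checks that the twisted-arrow category and the functor $\infty$-category decompose compatibly, reducing to mapping categories and lower-dimensional cases by induction. Naturality in $F$ and $G$ is built in, since every ingredient (the Yoneda embedding, mapping categories, twisted arrows, the comparison map) is functorial in each slot.

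The hardest step will be controlling the functor $\infty$-bicategory $\on{Fun}(\C, \DD)$ and its mapping categories in the scaled simplicial set model compatibly with pushout decompositions of $\C$. This requires a careful analysis of the Gray-type tensor product used to encode natural transformations into an $\infty$-bicategory, together with the fibrant replacement occurring implicitly in $\on{Nat}_\C(F,G)$; once this compatibility is established, the combinatorics of $\on{Tw}(\Delta^n)$ on the other side can be matched by direct calculation. The full argument is carried out in \cite{AGSb}.
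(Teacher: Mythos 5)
First, note that this paper does not actually prove \autoref{prop:natMO}: the statement is imported from \cite{AGSb} (Theorem 4.3 there), and no argument is given here, so there is no in-paper proof to compare your proposal against. Your outline must therefore stand on its own, and as written it has a genuine gap at its central step. You reduce to $\C=\Delta^n$ by asserting that ``both sides transform colimit presentations of $\C$ (in scaled simplicial sets) into limits of $\infty$-categories.'' For the left-hand side this is reasonable, but for the right-hand side it fails as stated: $\on{Tw}(\mathblank)$ is restriction along the cosimplicial object $[n]\mapsto [n]\star[n]^{\op}$, so it preserves colimits of simplicial sets strictly, but it does \emph{not} preserve categorical equivalences. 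For instance the spine inclusion $\on{Sp}^2\hookrightarrow\Delta^2$ is inner anodyne, yet $\on{Tw}(\on{Sp}^2)\to\on{Tw}(\Delta^2)$ is not an equivalence (the twisted arrow $0\to 2$ is not in the image, and $\on{Tw}(\Delta^2)$ is a poset, so nothing is isomorphic to it). Consequently the functor $\C\mapsto \lim_{\on{Tw}(\C)^{\op}}\on{Map}_{\DD}(F(\mathblank),G(\mathblank))$ does not visibly satisfy descent in $\C$; what is actually needed is a coinitiality argument showing that restricting the limit along such inclusions does not change its value. That is where the real content of the theorem lives, and your induction scheme is silent on it.

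The second gap is the construction of the comparison functor itself. You describe its value on objects of $\on{Tw}(\C)$ (gluing $G(f)\circ\eta_c$ to $\eta_{c'}\circ F(f)$ along $\theta_f$) and assert that these choices ``assemble coherently'' over the higher simplices of $\on{Tw}(\C)$, but producing that coherence --- a single map into the relevant fibration over $\on{Tw}(\C)^{\op}$, natural in $F$ and $G$ and compatible with the scaling encoding the $2$-cells of $\DD$ --- is precisely the combinatorial heart of \cite{AGSb}, where it is handled by constructing an enhanced twisted arrow fibration rather than by hand. Since your final sentence defers ``the full argument'' to \cite{AGSb}, the proposal is an honest roadmap but not a proof; the two points above are the ones that would have to be supplied to make it one.
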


\begin{corollary}\label{cor:colimtw}
	Let $\func{F: \C \to \goth{Cat}_{\infty}}$ and $\func{W: \C^{\op} \to \goth{Cat}_{\infty}}$. Then there is an equivalence of $\infty$-categories
	\[
		\begin{tikzcd}[ampersand replacement=\&]
			W \tensor F  \arrow[r,"\simeq"] \& \colim\limits_{\on{Tw}(\C)}W \times F.
		\end{tikzcd}
	\]
\end{corollary}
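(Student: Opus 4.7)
The approach is to verify the corepresenting property of $W \tensor F$ directly for the candidate $\colim_{\on{Tw}(\C)}W \times F$, and then conclude by the Yoneda lemma. Concretely, it suffices to produce, naturally in $X \in \goth{Cat}_\infty$, an equivalence
\[
	\on{Nat}(W, F^{*}\mathcal{Y}_{\goth{Cat}_\infty})(X) \isom \on{Map}_{\goth{Cat}_\infty}\Bigl(\colim_{\on{Tw}(\C)} W \times F,\; X\Bigr).
\]
By definition, the left-hand side unwinds to $\on{Nat}_{\C^{\op}}(W,\goth{Cat}_\infty(F(\mathblank),X))$, so everything will proceed from this identification.

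The first step is to apply \autoref{prop:natMO} to the $\infty$-category $\C^{\op}$ and to the pair of functors $W, \goth{Cat}_\infty(F(\mathblank),X)\colon \C^{\op}\to \goth{Cat}_\infty$. This expresses the natural-transformation category as a limit over $\on{Tw}(\C^{\op})^{\op}$. Invoking the canonical identification $\on{Tw}(\C^{\op})\isom \on{Tw}(\C)^{\op}$, the indexing becomes a limit over $\on{Tw}(\C)$ of the integrand $\on{Map}_{\goth{Cat}_\infty}(W(\mathblank),\goth{Cat}_\infty(F(\mathblank),X))$.

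The second step exploits the Cartesian closed structure of $\goth{Cat}_\infty$: the internal-Hom adjunction furnishes a natural equivalence $\on{Map}(A,\goth{Cat}_\infty(B,X))\isom \on{Map}(A\times B, X)$, rewriting the integrand as $\on{Map}_{\goth{Cat}_\infty}(W(\mathblank)\times F(\mathblank),X)$. The third step then moves the limit inside using that mapping spaces convert colimits in the first variable into limits, giving
\[
	\lim_{\on{Tw}(\C)}\on{Map}_{\goth{Cat}_\infty}(W(\mathblank)\times F(\mathblank),X) \isom \on{Map}_{\goth{Cat}_\infty}\Bigl(\colim_{\on{Tw}(\C)} W\times F,\; X\Bigr),
\]
which is the desired equivalence.

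The only place where genuine care is required is the bookkeeping of variances. Since $W$ is contravariant and $F$ is covariant, their external product assembles into a functor on $\C^{\op}\times \C$, and it is the projection $\on{Tw}(\C)\to \C \times \C^{\op}$ recorded in the definition of the twisted arrow category that packages this into a single diagram on $\on{Tw}(\C)$. The one nontrivial verification is thus that the equivalence $\on{Tw}(\C^{\op})^{\op}\isom \on{Tw}(\C)$ is compatible with these projections, so that the pointwise diagram indexed by a morphism $f\colon c\to c'$ on either side of the chain matches $W(c')\times F(c)$ (or $W(c)\times F(c')$, according to the convention). Once this alignment is in place, the three moves above are formal and the corollary follows.
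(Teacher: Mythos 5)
Your argument is essentially correct, but note that the paper itself offers no proof of this corollary to compare against: both \autoref{prop:natMO} and \autoref{cor:colimtw} are imported from \cite{AGSb} (as Theorems 4.3 and 4.28), so what you have written is a direct derivation of the second from the first. That is a legitimate and arguably more informative route, and the three moves you make --- apply \autoref{prop:natMO} over $\C^{\op}$ to the pair $W$ and $\goth{Cat}_\infty(F(\mathblank),X)$, curry using the Cartesian closure of $\iCat_\infty$, then pull the limit outside as a colimit --- are exactly the expected ones. One small caveat: to invoke the (bicategorical) Yoneda lemma against the definition of $W\tensor F$ you should carry \emph{mapping categories} $\Fun(-,-)$ through the whole chain rather than mapping spaces; this is in fact consistent with the paper's use of $\on{Map}_{\DD}$ in \autoref{prop:natMO}, so read your $\on{Map}_{\goth{Cat}_\infty}$ that way and the conclusion follows.

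The one place to tighten is precisely the variance bookkeeping you single out as the crux. With the paper's definition $\Hom(\Delta^n,\on{Tw}(\C))\isom\Hom(\Delta^n\star(\Delta^n)^{\op},\C)$, one has $(\Delta^n\star(\Delta^n)^{\op})^{\op}\isom\Delta^n\star(\Delta^n)^{\op}$, so the canonical identification is $\on{Tw}(\C^{\op})\isom\on{Tw}(\C)$, \emph{not} $\on{Tw}(\C)^{\op}$ (the two right fibrations correspond under the swap of the factors of $\C\times\C^{\op}$). Consequently \autoref{prop:natMO} applied over $\C^{\op}$ produces a limit over $\on{Tw}(\C)^{\op}$, not over $\on{Tw}(\C)$ as you claim. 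This turns out to be what you need, because your third step is also off by an op: the correct conversion is $\on{Map}(\colim_I D,X)\isom\lim_{I^{\op}}\on{Map}(D(\mathblank),X)$, so a limit over $\on{Tw}(\C)^{\op}$ is what becomes a mapping category out of a colimit over $\on{Tw}(\C)$. As written, your two steps each drop one $(-)^{\op}$ and the errors cancel, landing on the correct statement; restore both and the argument is complete.
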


We finish the section by relating conical weighted colimits, i.e. those with weight constant on the terminal category, with ordinary colimits.

\begin{proposition}\label{prop:point}
	Let $\func{F: \C \to \mathbb{B}}$ and let $\func{\underline{*}: \C^{\op} \to \goth{Cat}_{\infty}}$ denote the constant functor with value the terminal $\infty$-category. Suppose that $\underline{*}\tensor F$ exists, then $\colim\limits_{\C} F$ exists and there is an equivalence in $\scr{B}$
	\[
		\func{\underline{*}\tensor F \to[\simeq] \colim\limits_{\C} F}
	\]
	where the right-hand side is given by the $\infty$-categorical colimit of $F$ in the underlying $\infty$-category $\scr{B}\subseteq \mathbb{B}$.
\end{proposition}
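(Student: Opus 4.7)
The plan is to unwind the universal property defining $\underline{*}\tensor F$ and match it against that of the $\infty$-categorical colimit of $F$ in the underlying $\infty$-category $\scr{B}\subseteq \mathbb{B}$. By definition of the weighted colimit, for every $x \in \mathbb{B}$ there is a natural equivalence
$$\mathbb{B}(\underline{*}\tensor F, x) \simeq \on{Nat}_{\C^{\op}}\bigl(\underline{*},\, \mathbb{B}(F(\mathblank), x)\bigr)$$
in $\goth{Cat}_{\infty}$. The crux of the argument is to identify the right-hand side with the conical limit $\lim_{\C^{\op}} \mathbb{B}(F(\mathblank), x)$ taken in $\goth{Cat}_{\infty}$.

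Granting this identification, I would pass to underlying Kan complexes, using that the inclusion $\scr{B} \hookrightarrow \mathbb{B}$ extracts the core of each mapping $\infty$-category and that the core functor commutes with limits in $\goth{Cat}_{\infty}$. This yields
$$\scr{B}(\underline{*}\tensor F, x) \simeq \lim_{\C^{\op}} \scr{B}(F(\mathblank), x)$$
naturally in $x \in \scr{B}$, which is precisely the defining universal property of the $\infty$-colimit $\colim_{\C} F$ in $\scr{B}$. Hence $\underline{*}\tensor F$, regarded as an object of $\scr{B}$, represents the appropriate functor, and unwinding the Yoneda lemma produces the desired canonical equivalence.

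The main obstacle is the step $\on{Nat}_{\C^{\op}}(\underline{*}, G) \simeq \lim_{\C^{\op}} G$ for $G\colon \C^{\op} \to \goth{Cat}_{\infty}$. The cleanest route uses \autoref{prop:natMO}, which rewrites the left-hand side as $\lim_{\on{Tw}(\C^{\op})^{\op}} \on{Map}_{\goth{Cat}_{\infty}}(\underline{*}, G(\mathblank))$. Since $\on{Map}_{\goth{Cat}_{\infty}}(*, \Y) \simeq \Y$ for any $\infty$-category $\Y$, this becomes the limit of $G$ pulled back along one of the projections $\on{Tw}(\C^{\op})^{\op} \to \C^{\op}$, and one must verify that this projection is cofinal; the relevant fibers are slice $\infty$-categories admitting initial objects, hence weakly contractible, so Quillen's Theorem A for $\infty$-categories applies and the limit collapses to $\lim_{\C^{\op}} G$. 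An alternative route invokes the universal property of limits in the $\infty$-bicategory $\goth{Cat}_{\infty}$ directly: the defining equivalence $\on{Map}_{\goth{Cat}_{\infty}}(\X, \lim G) \simeq \on{Nat}(\underline{\X}, G)$, specialized to $\X = *$, combined with $\on{Map}_{\goth{Cat}_{\infty}}(*, \lim G) \simeq \lim G$, gives the identification without any cofinality argument.
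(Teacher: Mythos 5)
Your argument is correct and rests on the same key input as the paper, namely \autoref{prop:natMO}, but the middle of the proof is organized differently. The paper never identifies $\on{Nat}_{\C^{\op}}(\underline{*},G)$ with the conical limit $\lim_{\C^{\op}}G$: after restricting to $\scr{B}$ and passing to underlying Kan complexes, it applies \autoref{prop:natMO} \emph{twice} --- once to rewrite $\on{Nat}_{\C^{\op}}(\underline{*},\scr{B}(F(\mathblank),b))$ as $\lim_{\on{Tw}(\C)^{\op}}\scr{B}(F(\mathblank),b)$, and once more, in the reverse direction, to rewrite that same twisted-arrow limit as $\on{Nat}(F,\underline{b})$ --- and then concludes by observing that the left fibration classifying $b\mapsto\on{Nat}(F,\underline{b})$ inherits an initial object from $\scr{B}_{/\underline{*}\tensor F}$. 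This keeps everything at the level of twisted-arrow limits and needs neither the cofinality of the projection out of the twisted arrow category nor the limit-formula characterization of $\infty$-colimits ($\scr{B}(b,x)\simeq\lim_{\C^{\op}}\scr{B}(F(\mathblank),x)$ naturally in $x$), both of which your route imports as external, standard-but-unproved-here facts. What your version buys is a more recognizable intermediate statement (constant weight $\underline{*}$ gives a conical colimit because $\on{Nat}(\underline{*},G)\simeq\lim G$); what it costs is the extra cofinality verification, which you should cite precisely (it is Lemma 6.6 resp.\ Proposition 5.1 in \cite{GHN}) rather than gesture at. The one step I would not wave through is your proposed alternative route via the enriched universal property $\on{Nat}(\underline{\X},G)\simeq\on{Map}_{\goth{Cat}_{\infty}}(\X,\lim G)$ of limits in $\goth{Cat}_{\infty}$: that conical limits in $\iCat_{\infty}$ automatically satisfy this $\goth{Cat}_{\infty}$-enriched universal property is true but is established nowhere in this paper, so as written it is circular with exactly the identification you are trying to prove.
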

\begin{proof}
	Let $\func{\Xi: \mathbb{B}(\underline{*}\tensor F,\mathblank) \nat[\simeq] \on{Nat}(\underline{*},F^{*}\mathcal{Y}_{\mathbb{B}})}$ be the natural equivalence exhibiting $\underline{*}\tensor F$ as the weighted colimit of $F$ and let $\iota^{*}\Xi$ denote its restriction to $\scr{B}$. Observe that $\iota^{*}\Xi$ factors through $\iCat_{\infty}$. We will abuse notation by viewing $\iota^{*}\Xi$ as a functor with target $\iCat_{\infty}$. Now we consider the following composite
	\[
		\func{\scr{B}\times \Delta^{1} \to[\iota^{*}\Xi] \iCat_{\infty}\to[k] \scr{S}.}
	\]
	where $k$ denotes the underlying Kan complex functor. It is clear that we have produced a natural equivalence $\func{\scr{B}(\underline{*}\tensor F,\mathblank)\nat[\simeq] \on{Nat}_{\C^{\op}}^{\simeq}(\underline{*},F^{*}\mathcal{Y}_{\scr{B}})}$ where $\mathcal{Y}_{\scr{B}}$ is the $\infty$-categorical Yoneda embedding. Observe that due to \autoref{prop:natMO} we have the following natural equivalence of spaces for every $b \in \scr{B}$
\[
	\on{Nat}_{\C^{\op}}(\underline{*},\scr{B}(F(\mathblank),b))\isom \lim\limits_{\on{Tw}(\C)^{\op}}\scr{S}(*,\scr{B}(F(\mathblank),b))\isom \lim\limits_{\on{Tw}(\C)^{\op}}\scr{B}(F(\mathblank),b)
\]
\[
	\lim\limits_{\on{Tw}(\C)^{\op}}\scr{B}(F(\mathblank),b) \isom \on{Nat}_{\C^{\op}}(F,\underline{b})
\]
where $\underline{b}$ denotes the constant functor with value $b \in \scr{B}$. We have thus produced equivalences of functors
\[
	\func{\scr{B}(\underline{*}\tensor F,\mathblank) \nat[\simeq] \on{Nat}_{\C^{\op}}(\underline{*},F^{*}\mathcal{Y}_{\scr{B}}) \nat[\simeq] \on{Nat}_{\C^{\op}}\left(F,\underline{(\mathblank)}\right).}
\]
We note that $\scr{B}_{/\underline{*}\tensor F}$ is the left fibration classifying the functor $\scr{B}(\underline{*}\tensor F,\mathblank)$ and that it has an initial object. Therefore the left fibration classifying $\on{Nat}_{\C^{\op}}\left(F,\underline{(\mathblank)}\right)$ has an initial object. This exhibits $\underline{*}\tensor F$ as the colimit of $\func{F: \C \to \scr{B}}$.
\end{proof}

\section{Marked colimits}\label{sec:markedcolimits}
\subsection{Definitions and general properties}

\begin{definition}
	Let $\C^{\dagger}$ be a marked $\infty$-category and consider $\func{\mathfrak{F}(\on{id}_{\C}):\mathfrak{F}(\C)^{\dagger} \to \C}$ as in \autoref{def:freemarked}. Given the Cartesian fibration $\on{L}^{\C}_{W}\left(\mathfrak{F}(\on{id}_\C)^{\dagger}\right)$ discussed in \autoref{def:fiberwiseloc} we let  
	\[
		\func{ \mathfrak{C}^{\dagger}_{\C /}: \C^{\op} \to \goth{Cat}_{\infty}}
	\] 
	be its associated functor.
\end{definition}

\begin{definition}\label{def:markedcolimit}
	Given a marked $\infty$-category $\C^{\dagger}$ and a diagram $\func{F: \C \to \mathbb{B}}$ we define
	\[
		\colimdag\limits_{\C}F \coloneqq \mathfrak{C}^{\dagger}_{\C /} \tensor F
	\]
	and call it the \emph{marked colimit} of $F$.
\end{definition}

\begin{remark}
	Using \autoref{cor:colimtw} we see that if $\mathbb{B}=\goth{Cat}_{\infty}$ and the marking of $\C$ consists only of equivalences our definition coincides with the definition of \emph{lax colimit} given in  \cite{GHN}. 
\end{remark}

\begin{theorem}\label{thm:sharp}
	Let $\func{F: \C \to \mathbb{B}}$ and suppose that $\colimsharp\limits_{\C}F$ exists. Then the $\infty$-categorical colimit of $F$ in the underlying $\infty$-category $\scr{B}\subseteq \mathbb{B}$ exists and there is an equivalence
	\[
		\func{\colimsharp\limits_{\C}F \to[\simeq] \colim\limits_{\C} F }
	\]
\end{theorem}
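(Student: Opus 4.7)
The plan is to reduce the statement to \autoref{prop:point} by showing that the weight $\mathfrak{C}^{\sharp}_{\C /}$ is equivalent to the constant weight $\underline{*}$.

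First, I would invoke \autoref{lem:locbasechange} applied to the identity functor $\func{\on{id}_\C : \C \to \C}$ (with $\C$ regarded as maximally marked) to identify the value of $\mathfrak{C}^{\sharp}_{\C /}$ at an object $c \in \C$ with the $\infty$-categorical localization $\on{L}_{W}(\C^{\sharp}_{c/})$. Since every edge of $\C^{\sharp}_{c/}$ is marked, this localization is precisely the $\infty$-groupoid completion of the slice $\C_{c/}$. The slice $\C_{c/}$ admits an initial object, namely the degenerate edge $\on{id}_c$, and the $\infty$-groupoid completion of any $\infty$-category with an initial (or terminal) object is contractible. Hence for every $c \in \C$, the value $\mathfrak{C}^{\sharp}_{\C /}(c)$ is a contractible $\infty$-category.

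Next, I would upgrade this pointwise contractibility to an equivalence of functors $\func{\mathfrak{C}^{\sharp}_{\C /} \nat[\simeq] \underline{*}}$. The full subcategory of $\goth{Cat}_{\infty}$ spanned by the contractible $\infty$-categories is itself a contractible $\infty$-groupoid (as every such object is uniquely equivalent to the terminal $\infty$-category via an essentially unique map). Since $\mathfrak{C}^{\sharp}_{\C /}$ factors through this contractible subspace, it is canonically equivalent to the constant functor $\underline{*}$ with value the terminal $\infty$-category.

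Using that weighted colimits depend on the weight only up to equivalence (which is immediate from the representability definition, since equivalent weights induce equivalent functors $\on{Nat}(W,F^{*}\mathcal{Y}_{\mathbb{B}})$), this equivalence of weights yields an equivalence
\[
	\colimsharp_{\C} F \;=\; \mathfrak{C}^{\sharp}_{\C /} \tensor F \;\isom\; \underline{*} \tensor F.
\]
Finally, \autoref{prop:point} identifies the right-hand side with $\colim_{\C} F$ in the underlying $\infty$-category $\scr{B} \subseteq \mathbb{B}$, establishing both the existence and the desired equivalence.

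The step I expect to be the most delicate is the passage from pointwise contractibility of $\mathfrak{C}^{\sharp}_{\C /}$ to a genuine equivalence of functors with values in $\goth{Cat}_{\infty}$; in particular one must check that the canonical comparison constructed from the projections to the terminal object is coherently compatible with the functoriality coming from the Cartesian fibration $\on{L}^{\C}_{W}(\mathfrak{F}(\on{id}_\C)^{\dagger}) \to \C$. The contractibility of the relevant subspace of $\goth{Cat}_{\infty}$ is what removes this obstruction.
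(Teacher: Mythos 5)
Your proposal is correct and follows essentially the same route as the paper: identify the values of $\mathfrak{C}^{\sharp}_{\C/}$ with $\on{L}_W(\C^{\sharp}_{c/})$ via \autoref{lem:locbasechange}, use weak contractibility of the slices $\C_{c/}$ to see these are trivial, deduce an equivalence of weights with $\underline{*}$, and conclude by \autoref{prop:point}. The paper handles the ``pointwise to natural'' upgrade slightly more directly, by taking the unique natural transformation $\func{t:\mathfrak{C}^{\sharp}_{\C/} \nat \underline{*}}$ to the terminal functor and checking it is a levelwise equivalence, but this is the same argument in substance.
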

\begin{proof}
	Let $\func{t: \mathfrak{C}_{\C/}^{\sharp} \nat \underline{*}}$ denote the unique map to the terminal functor. By \autoref{prop:point} it will suffice to show that $t$ is a levelwise equivalence. This follows immediately from \autoref{lem:locbasechange} and the fact that slice categories $\C_{c/}$ are all contractible.
\end{proof}

\begin{corollary}\label{cor:sharp}
	Let $\C^{\dagger}$ be a marked $\infty$-category and consider a functor $\func{F: \C^{\dagger} \to \scr{B}}$ with $\scr{B}$ an $\infty$-category. Then $\colimdag\limits_{\C}F$ exists if and only if the colimit of $F$ exists and there is an equivalence in $\scr{B}$
	\[
		\func{\colimdag\limits_{\C}F \to[\simeq] \colim\limits_C F}
	\]
\end{corollary}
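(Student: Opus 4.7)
The plan is to reduce to \autoref{thm:sharp} by showing that, when the target is an $\infty$-category $\scr{B}$, the canonical comparison map $\colimdag\limits_{\C}F \to \colimsharp\limits_{\C}F$ induced by enlarging the marking from $\dagger$ to $\sharp$ is an equivalence whenever either side is defined. Together with \autoref{thm:sharp}, this identifies $\colimdag\limits_{\C}F$ with the ordinary colimit $\colim\limits_{\C}F$ in $\scr{B}$, and the existence statements transfer along the resulting natural equivalence of representing functors.

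Concretely, I would first produce the natural transformation of weights $\mathfrak{C}^{\dagger}_{\C/}\Rightarrow \mathfrak{C}^{\sharp}_{\C/}$ from the evident inclusion of markings on $\mathfrak{F}(\C)$ (see \autoref{def:freemarked}), and verify via \autoref{lem:locbasechange} that this is pointwise the further-localization map $\on{L}_W(\C_{c/}^{\dagger})\to \on{L}_W(\C_{c/}^{\sharp})$. Pairing against $F^{*}\mathcal{Y}_{\scr{B}}$ and invoking \autoref{prop:natMO} reduces the question of whether the induced map on $\on{Nat}$-objects is a levelwise equivalence to the pointwise check that the restriction
\[
\on{Fun}\bigl(\on{L}_W(\C_{c/}^{\sharp}), \scr{B}(F(c),b)\bigr) \to \on{Fun}\bigl(\on{L}_W(\C_{c/}^{\dagger}), \scr{B}(F(c),b)\bigr)
\]
is an equivalence for each $c \in \C$ and $b \in \scr{B}$.

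The key observation is that since $\scr{B}$ is an $\infty$-category, each mapping $\infty$-category $\scr{B}(F(c),b)$ is a Kan complex. For any $\infty$-category $\X$ and Kan complex $K$, every morphism in $\X$ is sent to an equivalence in $K$, so $\on{Fun}(\X,K)$ depends only on the $\infty$-groupoid completion $|\X|$. Since any $\infty$-localization of $\X$ shares its $\infty$-groupoid completion with $\X$, we have $|\on{L}_W(\C_{c/}^{\dagger})|\isom |\on{L}_W(\C_{c/}^{\sharp})|\isom |\C_{c/}|\isom \ast$, where the last equivalence uses that undercategories possess an initial object. The pointwise equivalence of $\on{Fun}$-objects follows, representability transfers between the two functors via Yoneda, and \autoref{thm:sharp} yields the equivalence with $\colim\limits_{\C}F$. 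The main obstacle I anticipate is packaging the first step carefully: the fiberwise localization maps of \autoref{lem:locbasechange} must assemble into a genuine natural transformation of functors $\C^{\op}\to \goth{Cat}_{\infty}$, which is a compatibility check at the level of the straightening equivalence rather than a conceptual hurdle.
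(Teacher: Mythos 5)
Your proposal is correct and follows essentially the same route as the paper: reduce to \autoref{thm:sharp} by observing that, because the mapping categories of $\scr{B}$ are Kan complexes, the $\dagger$- and $\sharp$-weighted universal properties coincide. The paper compresses your pointwise groupoid-completion argument into the single remark that any natural transformation $\mathfrak{C}^{\dagger}_{\C/}\Rightarrow \scr{B}(F(\mathblank),b)$ must factor through $\mathfrak{C}^{\sharp}_{\C/}$, but the underlying idea is identical.
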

\begin{proof}
	It follows from \autoref{thm:sharp} after noting that since the mapping categories in $\B$ are Kan complexes then any natural transformation $\func{\mathfrak{C}^{\dagger}_{\C/} \nat \scr{B}(F(\mathblank),b)}$ must factor through $\mathfrak{C}^{\sharp}_{\C/}$. Thus concluding that both universal properties are the same.
\end{proof}

\begin{proposition}\label{prop:kanextensionweight}
	Let $\func{\pi:\X \to \C}$ be a Cartesian fibration where we view $\X$ as a marked category (denoted by $\X^{\natural}$) by marking the Cartesian edges. Consider the base change adjunction 	
	\[
		\pi_{!}: \iCat^{\on{cart}}_{\infty / \X} \llra : \iCat^{\on{cart}}_{\infty / \C}:\pi^{*}.
	\]
	Then for every Cartesian fibration $\func{\Y \to \C}$ there is an equivalence of $\infty$-categories
	\[
		\func{\on{Fun}_{\X}^{\on{cart}}\left(\on{L}_W^{\X}\left(\mathfrak{F}(\on{id}_\X)^{\natural}\right),\pi^{*}\Y\right) \to[\simeq] \on{Fun}_{\C}^{\on{cart}}(\X,\Y)}
	\]
	natural in $\Y$. In particular, there is an equivalence of Cartesian fibrations $\pi_! \on{L}_W^{\X}\left(\mathfrak{F}(\on{id}_\X)^{\natural}\right) \isom \X$.
\end{proposition}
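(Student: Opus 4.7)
My plan is to establish the equivalence by chaining three natural equivalences, one for each layer of structure involved, and then to deduce the final statement about $\pi_!$ by Yoneda.

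First, I would apply \autoref{def:fiberwiseloc} directly to the Cartesian fibration $\mathfrak{F}(\on{id}_{\X})^{\natural}\colon \mathfrak{F}(\X)^{\natural}\to \X$. Its defining universal property, applied to the Cartesian fibration $\pi^{*}\Y \to \X$, yields a natural equivalence
\[
\Fun^{\on{cart}}_{\X}\!\bigl(\on{L}_W^{\X}(\mathfrak{F}(\on{id}_\X)^{\natural}),\pi^{*}\Y\bigr)\isom \Fun^{\on{cart},\dagger}_{\X}\!\bigl(\mathfrak{F}(\X)^{\natural},\pi^{*}\Y\bigr).
\]
Next, I would invoke \autoref{lem:subcart} with $\E=\X$, $p=\on{id}_\X$, and the natural marking $\natural$. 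This gives a natural equivalence
\[
\Fun^{\on{cart},\dagger}_{\X}\!\bigl(\mathfrak{F}(\X)^{\natural},\pi^{*}\Y\bigr)\isom \Fun^{\dagger}_{\X}\!\bigl(\X^{\natural},\pi^{*}\Y\bigr).
\]

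The third and most delicate step is to identify $\Fun^{\dagger}_{\X}(\X^{\natural},\pi^{*}\Y)$ with $\Fun^{\on{cart}}_{\C}(\X,\Y)$. Here I would use the explicit pullback model $\pi^{*}\Y = \X\times_{\C}\Y$: the universal property of the pullback identifies a functor $\X \to \pi^{*}\Y$ over $\X$ with a functor $\X\to\Y$ over $\C$. What has to be checked is that this identification matches the two edge-preservation conditions. An edge of $\pi^{*}\Y$ is Cartesian over $\X$ precisely when its $\Y$-component is $\pi$-Cartesian over $\C$, so sending a $\pi$-Cartesian edge of $\X^{\natural}$ to a Cartesian edge of $\pi^{*}\Y\to\X$ is the same condition as sending it to a $\pi$-Cartesian edge of $\Y\to\C$. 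This is the heart of the argument and the main obstacle: one must verify this correspondence of Cartesian edges carefully, though it follows straightforwardly from the definition of pullback Cartesian fibrations.

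Stringing these three natural equivalences together proves the first claim. For the second claim, the base-change adjunction $\pi_{!}\dashv \pi^{*}$ combined with the equivalence just established gives
\[
\Fun^{\on{cart}}_{\C}\!\bigl(\pi_{!}\on{L}_W^{\X}(\mathfrak{F}(\on{id}_\X)^{\natural}),\Y\bigr)\isom \Fun^{\on{cart}}_{\C}(\X,\Y)
\]
naturally in $\Y$. Applying the Yoneda lemma in $\iCat^{\on{cart}}_{\infty/\C}$, this natural equivalence is corepresented by the claimed equivalence $\pi_{!}\on{L}_W^{\X}(\mathfrak{F}(\on{id}_\X)^{\natural})\isom \X$ of Cartesian fibrations over $\C$.
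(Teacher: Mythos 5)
Your argument is correct and follows essentially the same route as the paper: the paper's proof chains the same three identifications (your steps 1--2 are packaged there into a single equivalence via \autoref{def:fiberwiseloc} and \autoref{lem:subcart}, and your step 3 is what the paper calls the non-Cartesian base change adjunction, under which the marked/Cartesian edge conditions match up exactly as you verify), and concludes by the same Yoneda argument.
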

\begin{proof}
	We can produce the following natural equivalences
	\[
		\on{Fun}_{\X}^{\on{cart}}\left(\on{L}_W^{\X}\left(\mathfrak{F}(\on{id}_\X)^{\natural}\right),\pi^{*}\Y\right)\isom \Fun^{\natural}_{\X}\left(\X,\mathfrak{U}(\pi^{*}\Y)\right)\isom \Fun_{\C}^{\natural}(\X,\mathfrak{U}(\Y))= \Fun_\C^{\on{cart}}(\X,\Y)
	\]
	where the third equivalence is given by the non-Cartesian base change adjunction. The result follows.
\end{proof}

\begin{theorem}
	Let $\func{F:\C \to \mathbb{B}}$ and $\func{W:\C^{\op} \to \goth{Cat}_{\infty}}$. Let $\func{p:\scr{W} \to \C}$ denote the Cartesian fibration classifying $W$. Suppose that $W \tensor F$ exists, then $\colimnat\limits_{\scr{W}}F \circ p$ exists and there is an equivalence in $\mathbb{B}$
	\[
		\begin{tikzcd}[ampersand replacement=\&]
			W \tensor F \arrow[r,"\simeq"] \& \colimnat\limits_{\scr{W}}F \circ p.
		\end{tikzcd}
	\]
\end{theorem}
\begin{proof}
	Let $\mathfrak{W}_{\scr{W}/}^{\natural}$ denote the weight functor in the definition of the colimit of $F \circ p$. By \autoref{prop:kanextensionweight} we know that $p_!\mathfrak{W}_{\scr{W}/}^{\natural} \isom W $. This shows that there is a map
	\[
		\func{\on{Nat}(W,F^{*}\mathcal{Y}_{\mathbb{B}}) \nat \on{Nat}\left(\mathfrak{W}_{\scr{W}/}^{\natural},(F \circ p)^{*}\mathcal{Y}_{\mathbb{B}}\right)}
	\]
	which is levelwise an equivalence on the underlying Kan complexes. Combining \autoref{prop:natMO} and Proposition 6.9 in \cite{GHN} we obtain for every $b \in \mathbb{B}$ the following commutative diagram
	\[
		\begin{tikzcd}[ampersand replacement=\&]
			\on{Nat}_{\C^{\op}}(W,\mathbb{B}(F(\mathblank),b)) \arrow[r] \arrow[d,"\simeq"] \&  \on{Nat}_{\scr{W}^{\op}}\left(\mathfrak{W}_{\scr{W}/}^{\natural},\mathbb{B}\left((F\circ p)(\mathblank),b\right)\right) \arrow[d,"\simeq"] \\
		\on{Fun}_{\C}^{\on{cart}}(\scr{W},\Y)	 \arrow[r,"\simeq"] \& \on{Fun}_{\scr{W}}^{\on{cart}}\left(\on{L}_W^{\scr{W}}\left(\mathfrak{F}(\on{id}_{\scr{W}})^{\natural}\right),p^{*}\Y\right)
		\end{tikzcd}
	\]
	where $\Y$ is the Cartesian fibration classifying $\mathbb{B}(F(\mathblank),b)$. Since the bottom horizontal map is an equivalence due to \autoref{prop:kanextensionweight} we conclude by 2-out-of-3.
\end{proof}

\subsection{Marked colimits in the $\infty$-bicategory of $\infty$-categories.}
In this section we show how to compute marked colimits of functors with values in $\infty$-categories. Our strategy will be a direct generalization of the arguments presented in \cite{GHN} where the authors show that the unstraightening functor computes the lax colimit of a functor.
\begin{definition}
	Let $\func{F: \C \to \iCat_{\infty}}$ be a functor and denote by $\func{\EuScript{F} \to \C}$ its associated coCartesian fibration. Given $\X \in \iCat_{\infty}$ we define a simplicial set $\Phi^{\EuScript{F}}_{\X}$ over $\C$ via the universal property $\Hom_{\C}(K, \Phi^{\EuScript{F}}_{\X})\isom \Hom (K \times_{\C}\EuScript{F},\X)$.
\end{definition}

\begin{remark}
	As an special case of (the dual of) Corollary 3.2.2.12 in \cite{HTT} we see that $\func{\Phi^{\EuScript{F}}_{\X} \to \C}$ is a Cartesian fibration. An edge $\func{\Delta^1 \to \Phi^{\EuScript{F}}_{\X}}$ is Cartesian if and only if the associated functor $\func{\Delta^1 \times_{\C} \EuScript{F}\to  \X}$ maps coCartesian edges in $\Delta^1 \times_{\C} \EuScript{F}$ to equivalences in $\X$.
\end{remark}

\begin{proposition}\label{prop:phiclassifies}
	The Cartesian fibration $\func{\Phi^{\EuScript{F}}_{\X} \to \C}$ classifies the functor
	\[
		\func{\Fun(F(-),\X):\C^{\op} \to \iCat_{\infty}.}
	\]
\end{proposition}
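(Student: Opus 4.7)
The plan is to identify the functor classified by $\Phi^{\EuScript{F}}_{\X}$ by computing its fibers, identifying the Cartesian pullback functors, and upgrading this data to an equivalence of functors $\C^{\op}\to \iCat_{\infty}$. For the fibers, applying the defining universal property $\Hom_{\C}(K,\Phi^{\EuScript{F}}_{\X}) \isom \Hom(K\times_{\C}\EuScript{F},\X)$ to $K=\{c\}\hookrightarrow \C$ and using that $\{c\}\times_{\C}\EuScript{F}\isom F(c)$ produces a canonical equivalence $(\Phi^{\EuScript{F}}_{\X})_c \isom \Fun(F(c),\X)$, matching the pointwise values of $\Fun(F(-),\X)$.

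For the Cartesian transport, given an edge $\func{f:c\to c'}$ in $\C$, I would pull back along $\func{\Delta^1\to \C}$. The resulting fibration $\Delta^1\times_{\C}\EuScript{F}\to \Delta^1$ is the coCartesian fibration classifying $\func{F(f):F(c)\to F(c')}$, whose coCartesian edges have the form $(x,0)\to (F(f)(x),1)$. By the remark preceding the proposition, a Cartesian edge of $\Phi^{\EuScript{F}}_{\X}$ over $f$ corresponds to a functor $\func{H:\Delta^1\times_{\C}\EuScript{F}\to \X}$ sending every such edge to an equivalence. Unpacking $H$ as the restrictions $\phi_c:F(c)\to \X$, $\phi_{c'}:F(c')\to \X$ together with a natural transformation $\phi_c\nat \phi_{c'}\circ F(f)$, the coCartesian-preservation condition is precisely that this transformation be an equivalence. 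Hence the source of the universal Cartesian lift of $f$ with target $\phi_{c'}$ is canonically $\phi_{c'}\circ F(f)$, showing that $f^{*}:\Fun(F(c'),\X)\to \Fun(F(c),\X)$ is precomposition with $F(f)$ --- exactly the action of $\Fun(F(-),\X)$ on morphisms.

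The main obstacle is upgrading this pointwise-and-on-arrows identification to a genuine equivalence of functors $\C^{\op}\to \iCat_{\infty}$. I would address this by extending the previous analysis to arbitrary simplices $\func{\sigma:\Delta^n\to \C}$: the pullback $\Delta^n\times_{\C}\EuScript{F}$ is the coCartesian fibration classifying $F\circ \sigma$, and the universal property then identifies the $n$-simplices of $\Phi^{\EuScript{F}}_{\X}$ lying over $\sigma$ with those of $\on{Un}_{\C}(\Fun(F(-),\X))$ lying over $\sigma$, once one restricts to Cartesian-preserving simplices on both sides. Assembling these identifications coherently yields the desired isomorphism of marked simplicial sets over $\C$; this is a standard compatibility of unstraightening with precomposition, which I would either cite or verify by induction on the simplicial skeleton.
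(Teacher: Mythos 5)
The paper does not actually prove this statement: its entire proof is the single line \enquote{This is Proposition 7.3 of \cite{GHN}}, so your proposal must be judged as an attempted direct proof of a result the author outsources. Your first two paragraphs are correct: the fiber identification $(\Phi^{\EuScript{F}}_{\X})_c \isom \Fun(F(c),\X)$ follows immediately from the defining universal property applied to $K=\{c\}$, and your description of the Cartesian edges over $f\colon c\to c'$ agrees with the remark preceding the proposition and correctly exhibits the transport functor as precomposition with $F(f)$.

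The gap is in the third paragraph, which is where the entire content of the proposition lives. Knowing the fibers and the pullback functors on individual edges does not determine a functor $\C^{\op}\to\iCat_{\infty}$ up to equivalence; one needs the full coherence, and your proposed mechanism for obtaining it does not work as stated. The $n$-simplices of $\on{Un}_{\C}(\Fun(F(-),\X))$ lying over $\sigma\colon\Delta^n\to\C$ are not identified with $\Hom(\Delta^n\times_{\C}\EuScript{F},\X)$ by any universal property of unstraightening --- producing such an identification coherently in $\sigma$ is essentially the proposition itself, so the \enquote{induction on the simplicial skeleton} is circular. Moreover, an \emph{isomorphism} of marked simplicial sets over $\C$ is too much to ask for: the unstraightening of $\Fun(F(-),\X)$ is only well defined up to equivalence of Cartesian fibrations, so the comparison has to be made at that level, for instance by exhibiting a natural equivalence $\Fun^{\on{cart}}_{\C}(\scr{G},\Phi^{\EuScript{F}}_{\X})\isom \on{Nat}(G,\Fun(F(-),\X))$ for varying test fibrations $\scr{G}\to\C$, or by following the argument of Proposition 7.3 in \cite{GHN} --- which is what the paper in fact does. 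If you replace your last paragraph with that citation, your first two paragraphs become a useful sanity check rather than a proof.
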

\begin{proof}
	This is Proposition 7.3 of \cite{GHN}.
\end{proof}

\begin{definition}\label{def:associatedmarked}
	Let $\C^{\dagger}$ be a marked $\infty$-category and consider a Cartesian (resp. coCartesian) fibration $\func{\X \to \C}$. We equip $\X$ with a marking by declaring an edge marked if and only if it is Cartesian (resp. coCartesian) and its image in $\C$ is marked. We will denote this marked $\infty$-category over $\C$ by $\X^{\natural(\dagger)}$.
\end{definition}

\begin{remark}\label{lem:markedstep1}
	Let $\C^{\dagger}$ be a marked $\infty$-category and consider a functor $\func{F: \C \to \iCat_{\infty}}$. Denote its associated coCartesian fibration by $\EuScript{F}$. Then given $\X \in \iCat_{\infty}$ we have a natural equivalence of $\infty$-categories
	\[
		\Fun\left(\on{L}_W\left(\EuScript{F}^{^{\natural(\dagger)}}\right),\X\right) \isom \Fun^{\dagger}(\EuScript{F},\X)\isom \Fun^{\dagger}_{\C}(\C,\Phi^{\EuScript{F}}_{\X}).
	\]
\end{remark}

\begin{proposition}\label{prop:cartnat}
	Let $\C$ be an $\infty$-category. Given $\func{F,G: \C^{\op} \to \Cat_{\infty}}$ classified by the Cartesian fibrations $\scr{F}$ and $\scr{G}$ respectively, there is a natural equivalence of $\infty$-categories
	\[
		\Fun^{\on{cart}}_{\C}(\scr{F},\scr{G})\isom \lim_{\on{Tw}(\C)^{\op}}\Fun(F(\mathblank),G(\mathblank))
	\]
\end{proposition}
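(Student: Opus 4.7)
The plan is to deduce this from \autoref{prop:natMO} once I identify the left-hand side with the mapping $\infty$-category of natural transformations in the functor $\infty$-bicategory $\Fun(\C^{\op},\goth{Cat}_{\infty})$. Concretely, I will produce a chain of natural equivalences
\[
	\Fun^{\on{cart}}_{\C}(\scr{F},\scr{G}) \isom \on{Nat}_{\C^{\op}}(F,G) \isom \lim_{\on{Tw}(\C^{\op})^{\op}} \on{Map}_{\goth{Cat}_{\infty}}(F(\mathblank),G(\mathblank)) \isom \lim_{\on{Tw}(\C)^{\op}} \Fun(F(\mathblank),G(\mathblank)),
\]
where the middle step is \autoref{prop:natMO} applied with source $\C^{\op}$ and target the $\infty$-bicategory $\goth{Cat}_{\infty}$, and the last step uses that $\on{Map}_{\goth{Cat}_{\infty}}(\X,\Y)=\Fun(\X,\Y)$ together with the self-duality $\on{Tw}(\C^{\op})\isom\on{Tw}(\C)$ inherited from $(\Delta^n\star(\Delta^n)^{\op})^{\op}\isom\Delta^n\star(\Delta^n)^{\op}$.

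The first equivalence is the one requiring actual work: it is the bicategorical enhancement of the straightening/unstraightening equivalence for Cartesian fibrations. At the level of objects it is well-known that Cartesian functors $\scr{F}\to\scr{G}$ over $\C$ correspond to natural transformations $F\nat G$. What is needed here is that this extends to an equivalence of $\infty$-categories, where the 2-morphisms on the Cartesian-fibration side are given by natural transformations of functors over $\C$ whose components are fiberwise, and on the functor side are the 2-cells inherited from the $\infty$-bicategorical structure of $\goth{Cat}_{\infty}$. This is naturally phrased at the level of scaled simplicial sets, where the mapping category $\on{Nat}_{\C^{\op}}(F,G)$ is computed directly as the pullback of a mapping category in the scaled functor category.

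To prove this enhanced equivalence I would argue by representability in the target: both $\scr{G}\mapsto \Fun^{\on{cart}}_{\C}(\scr{F},\scr{G})$ and $G\mapsto \on{Nat}_{\C^{\op}}(F,G)$ preserve limits in their second argument, so it suffices to verify the equivalence when $G = \mathcal{Y}_{\goth{Cat}_{\infty}}(\X)$ is representable. In that case $\scr{G}\to\C$ identifies with the target projection $\C\times\X^{\on{op}}\!\text{-like fibration}$ whose Cartesian functors from $\scr{F}$ compute $\Fun(\scr{F},\X)$, and the right-hand side computes the same $\infty$-category via the Yoneda lemma. Naturality in $\scr{F}$ (equivalently, in $F$) is automatic from the construction.

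The main obstacle is the bicategorical enhancement of unstraightening; once this is in hand, the remainder of the argument is a direct invocation of \autoref{prop:natMO} and a bookkeeping exercise with the twisted arrow orientations. Naturality in both variables is preserved throughout since every step is induced by a natural construction.
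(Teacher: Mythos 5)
Your route has a genuine gap at its load-bearing step. The reductions via \autoref{prop:natMO} and the self-duality $\on{Tw}(\C^{\op})\isom\on{Tw}(\C)$ are fine, but they merely trade the proposition for the single claim
\[
	\Fun^{\on{cart}}_{\C}(\scr{F},\scr{G})\isom \on{Nat}_{\C^{\op}}(F,G),
\]
i.e.\ for the statement that unstraightening induces equivalences on mapping \emph{categories} of the $\infty$-bicategory $\Fun(\C^{\op},\goth{Cat}_{\infty})$. Given \autoref{prop:natMO}, this claim is logically equivalent to the proposition being proved, so everything rests on your sketch of it, and that sketch does not work as written. You propose to reduce to the case $G=\mathcal{Y}_{\goth{Cat}_{\infty}}(\X)$, but $\mathcal{Y}_{\goth{Cat}_{\infty}}(\X)$ is a functor $\goth{Cat}_{\infty}^{\op}\to\goth{Cat}_{\infty}$, not an object of $\Fun(\C^{\op},\goth{Cat}_{\infty})$, so \enquote{$G$ representable} does not parse in this context. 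Even granting a sensible generating class, preservation of limits in the second variable only licenses a reduction along a class that generates under \emph{limits}; the co-Yoneda resolution is by weighted \emph{colimits} and lives in the \emph{first} variable, where using it would require knowing that unstraightening intertwines those weighted colimits with the corresponding colimits of Cartesian fibrations --- which is again essentially the full bicategorical straightening theorem you are trying to avoid proving. Finally, no comparison functor between the two sides is constructed before the reduction is attempted, so there is nothing whose equivalence could be checked on any generating class.

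For comparison, the paper disposes of this statement by citing Proposition 6.9 of \cite{GHN}, whose argument deliberately sidesteps the bicategorical enhancement of unstraightening: it computes $\Fun^{\on{cart}}_{\C}(\scr{F},\scr{G})$ directly as a limit over $\on{Tw}(\C)^{\op}$ using the Cartesian (marked) model structure and the end formula for mapping spaces, promoting the space-level statement to the categorical one by cotensoring with simplices. If you want a self-contained proof rather than a citation, that is the economical path; your plan is salvageable only if you are willing to import the mapping-category statement for bicategorical unstraightening as an external black box, in which case you should say so explicitly rather than sketch it.
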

\begin{proof}
	See Proposition 6.9 in \cite{GHN}.
\end{proof}

\begin{theorem}\label{thm:grothendieckcomputes}
	Let $\C^{\dagger}$ be a marked $\infty$-category.  Given $\func{F: \C \to \goth{Cat}_{\infty}}$ there is a equivalence of $\infty$-categories
	\[
		 \on{L}_W \left (\on{Un}^{\on{co}}_{\C}(F)^{\natural(\dagger)}\right) \isom \colimdag_{\C}F
	\]
\end{theorem}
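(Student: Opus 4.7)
Write $\EuScript{F} \coloneqq \on{Un}^{\on{co}}_\C(F)$. The plan is to construct, naturally in $\X \in \goth{Cat}_\infty$, a chain of equivalences
\[
\goth{Cat}_\infty(\on{L}_W(\EuScript{F}^{\natural(\dagger)}), \X) \isom \on{Nat}(\mathfrak{C}^{\dagger}_{\C/}, \goth{Cat}_\infty(F(\mathblank), \X)),
\]
which is precisely the defining universal property of $\colimdag_\C F = \mathfrak{C}^{\dagger}_{\C/} \tensor F$.

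The chain has four links. First, the universal property of $\infty$-localization gives $\Fun(\on{L}_W(\EuScript{F}^{\natural(\dagger)}), \X) \isom \Fun^\dagger(\EuScript{F}, \X)$, and by construction of $\Phi^{\EuScript{F}}_\X$ this in turn equals $\Fun^\dagger_\C(\C, \Phi^{\EuScript{F}}_\X)$; both identifications are recorded in \autoref{lem:markedstep1}. Here the marking on $\Phi^{\EuScript{F}}_\X$ is by its Cartesian edges and the marking on $\C$ is $\dagger$. Second, applying \autoref{lem:subcart} with $\E^\dagger = \C^\dagger$, $p = \on{id}_\C$, and target Cartesian fibration $\Phi^{\EuScript{F}}_\X \to \C$, the top horizontal equivalence of the diagram identifies $\Fun^\dagger_\C(\C, \Phi^{\EuScript{F}}_\X)$ with $\Fun^{\on{cart},\dagger}_\C(\mathfrak{F}(\C)^\dagger, \Phi^{\EuScript{F}}_\X)$. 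Third, the universal property of the fiberwise localization (\autoref{def:fiberwiseloc}) rewrites this as $\Fun^{\on{cart}}_\C(\on{L}_W^\C(\mathfrak{F}(\on{id}_\C)^\dagger), \Phi^{\EuScript{F}}_\X)$; but by definition $\on{L}_W^\C(\mathfrak{F}(\on{id}_\C)^\dagger)$ is the Cartesian fibration classifying $\mathfrak{C}^{\dagger}_{\C/}$. Fourth, \autoref{prop:phiclassifies} says $\Phi^{\EuScript{F}}_\X$ classifies $\Fun(F(\mathblank), \X)$, so combining \autoref{prop:cartnat} with \autoref{prop:natMO} turns the previous $\infty$-category of Cartesian sections into $\on{Nat}(\mathfrak{C}^{\dagger}_{\C/}, \goth{Cat}_\infty(F(\mathblank), \X))$, closing the chain.

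The only step that requires more than formal manipulation is the first, which boils down to the observation that a section $\C \to \Phi^{\EuScript{F}}_\X$ sends $\dagger$-marked edges of $\C$ to Cartesian edges of $\Phi^{\EuScript{F}}_\X$ if and only if the adjoint functor $\EuScript{F} \to \X$ sends coCartesian edges lying over $\dagger$-marked edges of $\C$ to equivalences in $\X$. This follows directly from the characterization of Cartesian edges of $\Phi^{\EuScript{F}}_\X$ noted after its definition, but it is where the compatibility between the marking $\natural(\dagger)$ and the marking $\dagger$ is essentially pinned down; everything else is a formal string of universal properties. Naturality in $\X$ is automatic since every link of the chain is visibly functorial in its target.
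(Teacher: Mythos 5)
Your proposal is correct and follows essentially the same chain of equivalences as the paper's proof: \autoref{lem:markedstep1}, \autoref{lem:subcart}, the universal property of the fiberwise localization, \autoref{prop:phiclassifies}, and \autoref{prop:cartnat}. The only cosmetic difference is at the final link, where you invoke \autoref{prop:natMO} directly to land on the defining universal property of $\mathfrak{C}^{\dagger}_{\C/}\tensor F$, while the paper passes through $\Fun\bigl(\colim_{\on{Tw}(\C)}\mathfrak{C}^{\dagger}_{\C/}\times F,\X\bigr)$ and concludes via the Yoneda lemma and \autoref{cor:colimtw} (itself a consequence of \autoref{prop:natMO}).
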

\begin{proof}
	We fix the notation $\on{Un}^{\on{co}}_{\C}(F)^{\natural(\dagger)}=\EuScript{F}^{\natural(\dagger)}$. We have a natural equivalence of $\infty$-categories provided by \autoref{lem:markedstep1} and \autoref{lem:subcart}
	\[
		\Fun\left(\on{L}_W\left(\EuScript{F}^{\natural(\dagger)}\right),\X \right)\isom \Fun^{\dagger}_{\C}(\C,\Phi^{\EuScript{F}}_{\X}) \isom \Fun^{\on{cart},\dagger}_{\C}(\mathfrak{F}(\C),\Phi^{\EuScript{F}}_{\X})\isom \Fun^{\on{cart}}_{\C}\left(\on{L}^{\C}_W(\mathfrak{F}(\on{id}_{\C})^{\dagger}),\Phi^{\EuScript{F}}_{\X}\right)
	\]
	\autoref{prop:phiclassifies} and \autoref{prop:cartnat} in turn imply
	\[
		\Fun^{\on{cart}}_{\C}\left(\on{L}^{\C}_W(\mathfrak{F}(\on{id}_{\C})^{\dagger}),\Phi^{\EuScript{F}}_{\X}\right) \isom \lim_{\on{Tw}(\C)^{\op}} \Fun(\mathfrak{C}^{\dagger}_{\C/}, \Fun(F(\mathblank),\X))\isom \Fun\left(\colim_{\on{Tw}(\C)}\mathfrak{C}^{\dagger}_{\C/}\times F,\X \right)
	\]
	Combining these two natural equivalences, the result follows from the Yoneda lemma and \autoref{cor:colimtw}.
\end{proof}

\begin{corollary}
	Let $\C$ be an $\infty$-category. Given $\func{F: \C \to \goth{Cat}_{\infty}}$ there is an equivalence of $\infty$-categories
	\[
		 \on{Un}^{\on{co}}_{\C}(F) \isom \colimflat_{\C}F
	\]
\end{corollary}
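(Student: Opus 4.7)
The plan is to read this off as an immediate consequence of \autoref{thm:grothendieckcomputes} specialized to the minimal marking $\dagger = \flat$, together with the observation that localizing at a class of equivalences is a trivial operation.

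First I would set $\dagger = \flat$ in \autoref{thm:grothendieckcomputes} to obtain the equivalence
\[
    \on{L}_W\left(\on{Un}^{\on{co}}_{\C}(F)^{\natural(\flat)}\right) \isom \colimflat_{\C}F.
\]
Next I would unwind the marking on the total space. By \autoref{def:associatedmarked}, the edges of $\on{Un}^{\on{co}}_{\C}(F)^{\natural(\flat)}$ that are marked are those coCartesian edges whose image in $\C$ is marked in $\C^{\flat}$, i.e.\ lies in the image of the degeneracy map. Such edges project to equivalences in $\C$, and coCartesian lifts of equivalences are equivalences in the total space.

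It remains to argue that the $\infty$-localization of $\on{Un}^{\on{co}}_{\C}(F)$ at a subclass of its equivalences recovers $\on{Un}^{\on{co}}_{\C}(F)$ itself. This is immediate from the universal property of the localization: for any $\X \in \iCat_{\infty}$, the full subcategory $\Fun^{\flat}(\on{Un}^{\on{co}}_{\C}(F),\X) \subseteq \Fun(\on{Un}^{\on{co}}_{\C}(F),\X)$ on functors inverting the marked (already invertible) edges coincides with the whole $\Fun(\on{Un}^{\on{co}}_{\C}(F),\X)$, so the canonical map
\[
    \on{Un}^{\on{co}}_{\C}(F) \to \on{L}_W\left(\on{Un}^{\on{co}}_{\C}(F)^{\natural(\flat)}\right)
\]
is an equivalence of $\infty$-categories by Yoneda.

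Composing the two equivalences yields the desired identification $\on{Un}^{\on{co}}_{\C}(F) \isom \colimflat_{\C}F$. No step presents a serious obstacle: the content has already been absorbed into \autoref{thm:grothendieckcomputes}, and all that is needed here is the bookkeeping about what the induced marking looks like in the edge case $\dagger=\flat$. One could alternatively phrase the argument by noting that $\on{L}_W$ is computed as a fibrant replacement in the marked model structure, and that a marked $\infty$-category whose marked edges are all equivalences is already fibrant, so no replacement is necessary.
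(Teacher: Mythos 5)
Your proposal is correct and follows the same route as the paper: specialize \autoref{thm:grothendieckcomputes} to the minimal marking and observe that the marked edges of $\on{Un}^{\on{co}}_{\C}(F)^{\natural(\flat)}$ are coCartesian edges over degenerate edges, hence equivalences, so the localization is trivial. The paper records only the one-line observation; your additional bookkeeping via the universal property of localization is just a fuller account of the same step.
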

\begin{proof}
	This follows by observing that a coCartesian edge lying over a degenerate edge must be an equivalence. 
\end{proof}

As a consequence of \autoref{thm:grothendieckcomputes}, we obtain an alternative proof of Corollary 3.3.4.3 in \cite{HTT}.

\begin{corollary}
	Let $\func{F: \C \to \iCat_{\infty}}$ then there is an equivalence of $\infty$-categories
	\[
			\on{L}_{W}\left(\on{Un}^{\on{co}}_{\C}(F)^{\natural}\right) \isom \colim_{\C}F
	\]
\end{corollary}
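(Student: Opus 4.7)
The plan is to deduce this corollary directly from Theorem \ref{thm:grothendieckcomputes} by specializing to the maximal marking $\C^{\sharp}$. View the functor $\func{F:\C \to \iCat_{\infty}}$ as a functor with values in the $\infty$-bicategory $\goth{Cat}_{\infty}$ via the inclusion $\iCat_{\infty} \subseteq \goth{Cat}_{\infty}$ of the underlying $\infty$-category. Applied to the marked $\infty$-category $\C^{\sharp}$, Theorem \ref{thm:grothendieckcomputes} yields an equivalence
\[
  \on{L}_W\!\left(\on{Un}^{\on{co}}_{\C}(F)^{\natural(\sharp)}\right) \isom \colimsharp_{\C} F.
\]

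First I would identify the left-hand side with the one appearing in the statement. By \autoref{def:associatedmarked}, $\natural(\sharp)$ consists of those edges of $\on{Un}^{\on{co}}_{\C}(F)$ which are coCartesian and whose image in $\C^{\sharp}$ is marked. Since every edge of $\C^{\sharp}$ is marked, this collection coincides with $\natural$, the collection of all coCartesian edges. Hence
\[
  \on{L}_W\!\left(\on{Un}^{\on{co}}_{\C}(F)^{\natural(\sharp)}\right) = \on{L}_W\!\left(\on{Un}^{\on{co}}_{\C}(F)^{\natural}\right).
\]

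Next I would identify the right-hand side with the ordinary $\infty$-categorical colimit. Because $F$ takes values in the $\infty$-category $\iCat_{\infty}$, \autoref{cor:sharp} (applied with $\scr{B} = \iCat_{\infty}$ and $\dagger = \sharp$) provides an equivalence
\[
  \colimsharp_{\C} F \isom \colim_{\C} F
\]
whenever either side exists. Combining the two displays yields the claimed equivalence. There is no real obstacle here beyond tracing through the conventions: the only thing to verify is the compatibility of the marking conventions $\natural(\sharp) = \natural$ and the applicability of \autoref{cor:sharp}, both of which are essentially formal once Theorem \ref{thm:grothendieckcomputes} and Corollary \ref{cor:sharp} are in hand.
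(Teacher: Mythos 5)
Your overall strategy is exactly the paper's: the printed proof reads ``Combine \autoref{thm:grothendieckcomputes} with $\C^{\sharp}$ and \autoref{prop:point}'', and your identification $\natural(\sharp)=\natural$ is a correct (and worth making explicit) unwinding of \autoref{def:associatedmarked} for the maximal marking.

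There is, however, one mis-step in the second half: \autoref{cor:sharp} does not apply as you invoke it. Its hypothesis is that the \emph{target} of $F$ is an $\infty$-category, i.e.\ that the ambient object in which the marked colimit is formed has Kan complexes as mapping objects; the conclusion is then that the marking is irrelevant because every natural transformation $\mathfrak{C}^{\dagger}_{\C/}\nat \scr{B}(F(\mathblank),b)$ factors through $\mathfrak{C}^{\sharp}_{\C/}$. Here the colimit produced by \autoref{thm:grothendieckcomputes} is the marked colimit of $F$ viewed as landing in the $\infty$-\emph{bi}category $\goth{Cat}_{\infty}$, whose mapping objects $\Fun(\X,\Y)$ are genuine $\infty$-categories, so the hypothesis of \autoref{cor:sharp} fails. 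Reading ``$\colimsharp_{\C}F$'' as the marked colimit computed in $\iCat_{\infty}$ instead would silently change the universal property being tested, and identifying the two sharp colimits (in $\goth{Cat}_{\infty}$ versus in its underlying $\infty$-category) is precisely the content of the result you are skipping. The fix is immediate: cite \autoref{thm:sharp}, which is stated for an arbitrary $\infty$-bicategory $\mathbb{B}$ and identifies $\colimsharp_{\C}F$ with the ordinary colimit in the underlying $\infty$-category $\scr{B}\subseteq\mathbb{B}$ (equivalently, follow the paper and use \autoref{prop:point} together with the levelwise equivalence $\mathfrak{C}^{\sharp}_{\C/}\simeq\underline{*}$ from \autoref{lem:locbasechange}). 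With that substitution your argument is complete and coincides with the paper's.
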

\begin{proof}
  Combine \autoref{thm:grothendieckcomputes} with $\C^{\sharp}$ and \autoref{prop:point}.
\end{proof}

\section{A cofinality criterion}\label{sec:cofinality}
The goal of this section is to extend the preexisting theory of cofinality to the setting of marked colimits. Our main result \autoref{thm:infty1dagger}, is a generalization of the characterization of cofinal functors appearing in Theorem 4.1.3.1 in \cite{HTT}. As an immediate corollary we obtain a generalization of Quillen's theorem A for marked $\infty$-categories. 
\begin{definition}
	Let $\func{f: \C^{\dagger} \to \D^{\dagger}}$ be a marked functor and consider $\func{\mathfrak{F}(f): \mathfrak{F}(\C)^{\dagger} \to \D}$ as in \autoref{def:freemarked}. We denote by
	\[
		\func{\mathfrak{C}^{\dagger}_{\D/}: \D^{\op} \to \goth{Cat}_{\infty}}
	\]
	the functor classifying the Cartesian fibration $\on{L}^{\D}_W\left( \mathfrak{F}(f)^{\dagger}\right)$.
\end{definition}

\begin{remark}\label{rem:Af}
	Observe that we have a natural transformation $\func{\mathcal{A}_{f}:\mathfrak{C}^{\dagger}_{\D/} \nat \mathfrak{D}^{\dagger}_{\D/}}$. We will abuse notation by also denoting by $\mathcal{A}_f$ the associated map of Cartesian fibrations.
\end{remark}

\begin{proposition}\label{prop:leftkan}
	Let $\func{f:\C^{\dagger} \to \D^{\dagger}}$ be a marked functor and let $f_{!}\dashv f^{*}$ denote the base change adjunction
	\[
		f_{!}: \iCat^{\on{cart}}_{\infty / \C} \llra : \iCat^{\on{cart}}_{\infty / \D}:f^{*}.
	\]
	Then there is a natural equivalence of $\infty$-categories
	\[
		\func{\Fun_{\C}^{\on{cart}}\left(\on{L}^{\C}_W\left( \mathfrak{F}(\on{id}_{\C})^{\dagger}\right), f^{*}\X \right) \to[\simeq] \Fun_{\D}^{\on{cart}}\left(\on{L}^{\D}_W\left( \mathfrak{F}(f)^{\dagger}\right), \X \right).}
	\]
	In particular, we have $\func{f_{!}\on{L}_W^{\C}\left(\mathfrak{F}(\on{id}_{\C})^{\dagger}\right) \to[\simeq] \on{L}^{\D}_W\left( \mathfrak{F}(f)^{\dagger}\right)}$.
\end{proposition}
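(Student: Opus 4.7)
The plan is to unfold both sides using the defining universal property of fiberwise localization (Definition \ref{def:fiberwiseloc}) together with Lemma \ref{lem:subcart}, and then to invoke the ordinary base-change equivalence for the pullback $f^*\X = \X \times_\D \C$. Applied to the Cartesian fibration $\mathfrak{F}(\on{id}_\C)^\dagger \to \C$ with target $f^*\X$, Definition \ref{def:fiberwiseloc} rewrites the left-hand side as $\Fun^{\on{cart},\dagger}_\C(\mathfrak{F}(\C), f^*\X)$; applied to $\mathfrak{F}(f)^\dagger \to \D$ with target $\X$, it rewrites the right-hand side as $\Fun^{\on{cart},\dagger}_\D(\mathfrak{F}(\C), \X)$. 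Next, Lemma \ref{lem:subcart} applied with $\E^\dagger = \C^\dagger$ and projection $\on{id}_\C$ (respectively $f$) yields natural equivalences
\[
\Fun^{\on{cart},\dagger}_\C(\mathfrak{F}(\C), f^*\X) \simeq \Fun^\dagger_\C(\C, \mathfrak{U}(f^*\X)), \qquad \Fun^{\on{cart},\dagger}_\D(\mathfrak{F}(\C), \X) \simeq \Fun^\dagger_\D(\C, \mathfrak{U}(\X)).
\]

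The one substantive step remaining is to identify these two $\Fun^\dagger$-categories. The universal property of the pullback gives $\Fun_\C(\C, \mathfrak{U}(f^*\X)) \simeq \Fun_\D(\C, \mathfrak{U}(\X))$, and I would argue that under this identification a marked edge of $\C^\dagger$ is sent to a Cartesian edge of $f^*\X \to \C$ if and only if its image in $\X$ is a Cartesian edge of $\X \to \D$, since Cartesian edges are preserved and reflected by the projection $f^*\X \to \X$ (a standard property of pullbacks of Cartesian fibrations). Hence the two marked subcategories coincide, producing the desired equivalence; naturality in $\X$ is immediate from the fact that each of the three identifications is natural in the target Cartesian fibration.

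For the ``In particular'' statement, I would apply the Yoneda lemma inside $\iCat^{\on{cart}}_{\infty/\D}$: by the adjunction $f_! \dashv f^*$, the equivalence of the main statement can be rewritten as $\Fun^{\on{cart}}_\D(f_!\on{L}^\C_W(\mathfrak{F}(\on{id}_\C)^\dagger), \X) \simeq \Fun^{\on{cart}}_\D(\on{L}^\D_W(\mathfrak{F}(f)^\dagger), \X)$ naturally in $\X$, whence Yoneda delivers the asserted equivalence of Cartesian fibrations over $\D$. The only mildly delicate point is the compatibility of the marked conditions across the pullback, but since this reduces to the well-known characterization of Cartesian edges in a pullback square of Cartesian fibrations I do not expect a genuine obstacle; the proposition is, in essence, a formal consequence of the universal properties of (marked) fiberwise localization and base change.
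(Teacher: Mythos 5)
Your proposal is correct and follows essentially the same route as the paper: both reduce each side to a $\Fun^{\dagger}$-category via the universal property of fiberwise localization together with \autoref{lem:subcart}, and then identify $\Fun^{\dagger}_{\C}(\C,\mathfrak{U}(f^{*}\X))$ with $\Fun^{\dagger}_{\D}(\C,\mathfrak{U}(\X))$ by the non-Cartesian base-change adjunction, which is exactly your pullback argument. Your explicit remark that Cartesian edges of $f^{*}\X\to\C$ are detected in $\X\to\D$ is a point the paper leaves implicit, and the Yoneda step for the ``in particular'' clause matches the paper's (unstated) conclusion.
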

\begin{proof}
	Let $\func{\X \to \D}$ be a Cartesian fibration. We observe that we have natural equivalences
	\[
		\Fun_{\C}^{\on{cart}}\left(\on{L}^{\C}_W\left( \mathfrak{F}(\on{id}_{\C})^{\dagger}\right), f^{*}\X \right) \isom \Fun_{\C}^{\dagger}(\C,\mathfrak{U}(f^{*}\X)) \isom \Fun_{\D}^{\dagger}(\C,\mathfrak{U}(\X))
	\]
	where the second equivalence is given by the non-Cartesian base change adjunction $f_* \dashv f^{*}$. It is clear that the right-hand side is equivalent to $\Fun_{\D}^{\on{cart}}\left(\on{L}^{\D}_W\left( \mathfrak{F}(f)^{\dagger}\right), \X \right)$ and the result follows.
\end{proof}

Given a marked functor $\func{f: \C^{\dagger} \to \D^{\dagger}}$ it follows that for every functor $\func{G: \D^\op \to \goth{Cat}_{\infty}}$ we can produce the following natural transformations
\[
	\begin{tikzcd}[ampersand replacement=\&]
		\on{Nat}_{\D^{\op}}(\mathfrak{D}^{\dagger}_{\D/},G) \arrow[r,nat] \& \on{Nat}_{\D^{\op}}(\mathfrak{C}^{\dagger}_{\D/},G) \arrow[r,nat,"\simeq"] \& \on{Nat}(\mathfrak{C}^{\dagger}_{\C/},f^{*}G).
	\end{tikzcd}
\]
where the last map is a natural equivalence by virtue of \autoref{prop:leftkan}. It follows that when $G=F^{*}\mathcal{Y}_{\mathbb{B}}$ for some diagram $\func{\D^{\op} \to \mathbb{B}}$, we obtain a map 
 \[
 	\func{\colimdag\limits_{\C}F\circ f \to \colimdag\limits_{\D}F }
 \]
 which we will call \emph{the canonical comparison map} whenever both are defined.
 
\begin{definition}
	A functor $\func{f: \C^{\dagger} \to \D^{\dagger}}$ of marked $\infty$-categories is said to be \emph{marked cofinal} if for every functor $\func{F:\D \to \mathbb{B}}$, the following conditions hold:
	\begin{itemize}
		\item The marked colimit $F$ exists if and only the marked colimit of $(F \circ f)$ exists.
		\item The canonical comparisonn map $\begin{tikzcd}[ampersand replacement=\&]
				\colimdag\limits_{\C}F\circ f \arrow[r,"\simeq"] \& \colimdag\limits_{\D}F
			\end{tikzcd}$ is an equivalence in $\mathbb{B}$.
	\end{itemize} 	
\end{definition}

\begin{proposition}\label{cor:square}
	Let $\func{f:\C^{\dagger} \to \D^{\dagger}}$ be a marked functor. For every diagram $\func{F:\D \to \goth{Cat}_{\infty}}$ we have a commutative diagram in $\iCat_{\infty}$ given by
	\[
		\begin{tikzcd}[ampersand replacement=\&]
			\on{L}_{W}\left( \on{Un}^{\on{co}}_{\C}(F \circ f)^{\natural(\dagger)}  \right) \arrow[r,"\simeq"]\arrow[d] \& \colimdag\limits_{\C}F \circ f \arrow[d] \\
			\on{L}_{W}\left( \on{Un}^{\on{co}}_{\D}(F)^{\natural(\dagger)}  \right) \arrow[r,"\simeq"] \& \colimdag\limits_{\D}F
		\end{tikzcd}
	\]
	where the leftmost vertical map is induced by pullback along $f$.
\end{proposition}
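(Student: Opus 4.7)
The plan is to construct a filling 2-cell for the square and show it is an equivalence by testing against an arbitrary $\X\in\iCat_\infty$ via the Yoneda lemma, and then tracing both compositions through the chain of natural equivalences produced in the proof of \autoref{thm:grothendieckcomputes}. Since the horizontal arrows are already constructed there, the content of the proposition is to identify the two vertical maps with one another under these equivalences.

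First I would identify the left vertical map concretely. Since unstraightening commutes with pullback, there is a canonical identification of coCartesian fibrations $\on{Un}^{\on{co}}_\C(F\circ f)\isom f^*\on{Un}^{\on{co}}_\D(F)$, and because $f$ is a marked functor and coCartesian edges pull back to coCartesian edges, this identification preserves the marking $\natural(\dagger)$ introduced in \autoref{def:associatedmarked}. The left vertical arrow is therefore obtained by applying $\on{L}_W$ to the canonical projection out of this pullback.

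Next I would unravel the right vertical map. Following the discussion after \autoref{rem:Af}, precomposition with the natural transformation $\mathcal{A}_f:\mathfrak{C}^{\dagger}_{\D/}\nat \mathfrak{D}^{\dagger}_{\D/}$, combined with the adjunction equivalence $\on{Nat}_{\D^{\op}}(\mathfrak{C}^{\dagger}_{\D/},G)\isom \on{Nat}_{\C^{\op}}(\mathfrak{C}^{\dagger}_{\C/}, f^{*}G)$ coming from \autoref{prop:leftkan}, produces the canonical comparison map. To compare this with the left vertical, I would apply $\Fun(-,\X)$ to the square and verify that every step of the chain in the proof of \autoref{thm:grothendieckcomputes} is natural with respect to changing the base from $\D$ to $\C$ along $f$: the universal property of $\on{L}_W$, the definition of $\Phi^{\EuScript{F}}_\X$, \autoref{lem:subcart}, the universal property of fiberwise localization, the combination of \autoref{prop:phiclassifies} with \autoref{prop:cartnat}, and finally \autoref{cor:colimtw}. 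Under this chain, the pullback operation $f^{*}$ on the unstraightening side corresponds precisely to restriction along $\mathcal{A}_f$ on the weight side.

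The main obstacle is the bookkeeping: at each step of the chain one has to confirm that the natural equivalence in question is itself natural in the marked functor $f$. The crucial input is the equivalence $f_{!}\on{L}^{\C}_W(\mathfrak{F}(\on{id}_{\C})^{\dagger}) \isom \on{L}^{\D}_W(\mathfrak{F}(f)^{\dagger})$ of \autoref{prop:leftkan}, which must be shown to interact coherently with the pullback $f^{*}\Phi^{\EuScript{F}}_\X$ of the auxiliary Cartesian fibration, i.e.\ the relevant squares of base-change adjunctions must commute up to coherent equivalence. Once this compatibility is in place, the Yoneda lemma produces the desired natural equivalence making the square commute in $\iCat_{\infty}$.
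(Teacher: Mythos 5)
The paper's own ``proof'' of this proposition is literally \emph{left to the reader}, so there is nothing to compare against line by line; your proposal is a correct reconstruction of the evidently intended argument, correctly identifying the two essential inputs: the identification $\on{Un}^{\on{co}}_{\C}(F\circ f)\isom f^{*}\on{Un}^{\on{co}}_{\D}(F)$ compatibly with the $\natural(\dagger)$-markings (using that $f$ is marked), and the fact that the comparison map on the right is built from $\mathcal{A}_f$ together with the base-change equivalence of \autoref{prop:leftkan}. The only caveat is that what you defer as ``bookkeeping'' is the entire content of the statement --- in particular the identification $\Phi^{f^{*}\EuScript{F}}_{\X}\isom f^{*}\Phi^{\EuScript{F}}_{\X}$ (immediate from the defining universal property of $\Phi$) and the coherence of the $f_{!}\dashv f^{*}$ adjunction with the chain in \autoref{thm:grothendieckcomputes} --- but since the paper itself omits exactly these checks, your outline is as complete as the source.
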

\begin{proof}
	Left to the reader.
\end{proof}

\begin{proposition}\label{prop:af2}
	A functor $\func{f: \C^{\dagger} \to \D^{\dagger}}$ of marked $\infty$-categories is marked cofinal if and only if $\mathcal{A}_f$ is an equivalence of Cartesian fibrations.
\end{proposition}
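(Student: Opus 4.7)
My plan rests on the chain of natural-transformation $\infty$-categories
\[
\on{Nat}_{\D^{\op}}(\mathfrak{D}^{\dagger}_{\D/}, G) \xrightarrow{\mathcal{A}_{f}^{*}} \on{Nat}_{\D^{\op}}(\mathfrak{C}^{\dagger}_{\D/}, G) \xrightarrow{\simeq} \on{Nat}_{\C^{\op}}(\mathfrak{C}^{\dagger}_{\C/}, f^{*}G),
\]
natural in $G \colon \D^{\op} \to \goth{Cat}_{\infty}$, already extracted in the discussion preceding the proposition. The second arrow is the equivalence of \autoref{prop:leftkan}, and when $G = F^{*}\mathcal{Y}_{\mathbb{B}}$ for some diagram $F \colon \D \to \mathbb{B}$ the composite is, by construction, the map representing the canonical comparison of marked colimits.

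The direction $(\Leftarrow)$ is then immediate. If $\mathcal{A}_{f}$ is an equivalence of Cartesian fibrations, precomposition with $\mathcal{A}_{f}$ is a levelwise equivalence of functors, so the composite above is an equivalence for every $G$. Specializing to $G = F^{*}\mathcal{Y}_{\mathbb{B}}$ yields an equivalence between the functors $\mathbb{B} \to \goth{Cat}_{\infty}$ that, when the relevant marked colimits exist, are corepresented by $\colimdag\limits_{\D} F$ and $\colimdag\limits_{\C} F \circ f$ respectively. Representability of one side therefore forces representability of the other, and the representing objects are canonically identified, which is precisely marked cofinality.

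For the direction $(\Rightarrow)$ my plan is to produce a single universal test diagram whose comparison map is $\mathcal{A}_{f}$ itself. Take $\mathbb{B} = \Fun(\D^{\op}, \goth{Cat}_{\infty})$ and $F = \mathcal{Y}_{\D}$. By the $\infty$-bicategorical Yoneda lemma the functor $F^{*}\mathcal{Y}_{\mathbb{B}}$ sends $G \in \mathbb{B}$ to the functor $d \mapsto \mathbb{B}(\mathcal{Y}_{\D}(d), G) \simeq G(d)$ and is therefore equivalent to $\id_{\mathbb{B}}$. Hence
\[
\on{Nat}(\mathfrak{D}^{\dagger}_{\D/}, F^{*}\mathcal{Y}_{\mathbb{B}}(G)) \simeq \on{Nat}(\mathfrak{D}^{\dagger}_{\D/}, G) = \mathbb{B}(\mathfrak{D}^{\dagger}_{\D/}, G),
\]
exhibiting $\mathfrak{D}^{\dagger}_{\D/}$ as $\colimdag\limits_{\D} F$. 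The same identification combined with \autoref{prop:leftkan} gives $\colimdag\limits_{\C} F \circ f \simeq \mathfrak{C}^{\dagger}_{\D/}$, and unwinding the Yoneda identifications one sees that the canonical comparison map becomes $\mathcal{A}_{f}$. Marked cofinality of $f$ applied to this test diagram then forces $\mathcal{A}_{f}$ to be an equivalence.

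The main obstacle is the bookkeeping in $(\Rightarrow)$: verifying that the bicategorical Yoneda lemma produces the stated identification for $\mathbb{B} = \Fun(\D^{\op}, \goth{Cat}_{\infty})$ and, more importantly, tracing the Yoneda correspondences carefully enough to confirm that the canonical comparison map coming from our test diagram is literally $\mathcal{A}_{f}$ rather than merely some equivalent morphism.
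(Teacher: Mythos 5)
Your argument is correct, but the forward direction takes a genuinely different route from the paper. The paper never leaves $\goth{Cat}_{\infty}$-valued diagrams: for each $d \in \D$ it tests marked cofinality against the corepresentable functor $\on{Map}_{\D}(d,\mathblank)\colon \D \to \scr{S}\subseteq \goth{Cat}_{\infty}$, uses \autoref{cor:square} (hence \autoref{thm:grothendieckcomputes}) to compute both marked colimits as the localizations $\on{L}_W(\C^{\dagger}_{d/})$ and $\on{L}_W(\D^{\dagger}_{d/})$, identifies the resulting equivalence with the fiber of $\mathcal{A}_f$ over $d$ via \autoref{lem:locbasechange}, and concludes because equivalences of Cartesian fibrations are detected fiberwise. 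You instead use a single universal test diagram $\mathcal{Y}_{\D}\colon \D \to \Fun(\D^{\op},\goth{Cat}_{\infty})$ and the bicategorical Yoneda lemma to exhibit the weights themselves as the two marked colimits, so that the comparison map is $\mathcal{A}_f$ on the nose. Your version is more conceptual and bypasses the Grothendieck-construction machinery of Section 4 entirely, but it buys this at the cost of two inputs the paper does not need here: that the presheaf $\infty$-bicategory $\Fun(\D^{\op},\goth{Cat}_{\infty})$ is an admissible target (fibrancy of the internal hom of scaled simplicial sets), and the strong, functorial form of the $\infty$-bicategorical Yoneda lemma giving $F^{*}\mathcal{Y}_{\mathbb{B}}\simeq \id_{\mathbb{B}}$ --- a heavier foundational commitment than the full faithfulness of $\mathcal{Y}_{\mathbb{B}}$ implicitly used elsewhere, and one you should cite or justify rather than fold into ``bookkeeping.'' The paper's approach, by contrast, leans only on results it has already established. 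Your treatment of the converse direction matches the paper's one-line argument.
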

\begin{proof}
	Suppose $f$ is marked cofinal. Given an object $d \in \D$ we define the composite
  \[
  	\func{\mathcal{Y}_d: \D \to \scr{S}\subseteq \iCat_{\infty}}
  \]
  where first functor is given by $\on{Map}_{\D}(d,\mathblank)$ and the second functor is the inclusion of the full subcategory of spaces. Then using \autoref{cor:square} we obtain an equivalence of $\infty$-categories
	\[
		 \begin{tikzcd}
		 	\on{L}_W (\C_{d/}^{\dagger}) \arrow[r,"\simeq"] & \on{L}_{W}(\D_{d/}^{\dagger}).
		 \end{tikzcd} 
	\]
	Using \autoref{lem:locbasechange} we identify this map with the fiber of the map $\mathcal{A}_f$ over $d$. Since equivalences of Cartesian fibrations can be detected fiberwise it follows that $\mathcal{A}_f$ is an equivalence. The converse follows immediately.
\end{proof}

\begin{remark}\label{rem:sharpA}
  Suppose $\func{f:\C^{\sharp} \to \D^{\sharp}}$ is cofinal for the maximal marking.  Since $f$ is cofinal we obtain an equivalence after $\infty$-groupoid completion
  \[
  	\on{L}_{W}\left(\C_{d/}^{\sharp}\right)\isom \on{L}_W\left(\D_{d/}^{\sharp}\right).
  \]
 Then it follows that $f$ satisfies the hypothesis of Theorem 4.1.3.1 in \cite{HTT} and so pullback along $f$ preserves all $\infty$-limits.
\end{remark}

\begin{remark}
	For the rest of this section we will abuse notation by denoting the Cartesian fibration $\on{L}_{W}^{\D}\left(\mathfrak{F}(f)^{\dagger}\right)$ by $\mathfrak{C}^{\dagger}_{\D/}$ and similarly for the other fiberwise localizations of free fibrations already mentioned.
\end{remark}

\begin{lemma}
	Let $\D^{\dagger}$ be a marked $\infty$-category and consider the Cartesian fibration $\func{\mathfrak{D}^{\dagger}_{\D/} \to \D}$. Then the following hold
	\begin{itemize}
		\item For every $d \in \D$ there exists an initial object in $\mathfrak{D}^{\dagger}_{\D/} \times_{\D}\set{d}$.
		\item Every object in $\mathfrak{D}^{\dagger}_{\D/} \times_{\D}\set{d}$ represented by a marked morphism is initial.
	\end{itemize}
\end{lemma}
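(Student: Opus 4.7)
The plan is to apply \autoref{lem:locbasechange}, which identifies the fiber $\mathfrak{D}^{\dagger}_{\D/} \times_{\D}\set{d}$ with the localized slice $\on{L}_W(\D^{\dagger}_{d/})$. Writing $L\colon \D_{d/} \to \on{L}_W(\D^{\dagger}_{d/})$ for the localization functor, the candidate initial object for the first bullet is $L(\on{id}_d)$, and for the second bullet is $L(\alpha)$ for any marked morphism $\alpha\colon d \to z$ in $\D^{\dagger}$. I would therefore first establish the initiality of $L(\on{id}_d)$ and then show every $L(\alpha)$ is equivalent to it, which suffices since initiality is preserved under equivalence.

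For the initiality of $L(\on{id}_d)$, I would exploit the standard fact that $\on{id}_d$ is initial in $\D_{d/}$, equivalently that the functor $\Delta^0 \to \D_{d/}$ picking out $\on{id}_d$ is left adjoint to the terminal projection. This adjunction is presented by a counit $\phi\colon \on{const}_{\on{id}_d} \Rightarrow \on{id}_{\D_{d/}}$ together with the coherent identification $\phi_{\on{id}_d} \simeq \on{id}_{\on{id}_d}$. Postcomposing with $L$ yields a natural transformation $L\phi\colon \on{const}_{L(\on{id}_d)} \Rightarrow L$ in $\Fun^{\dagger}(\D_{d/}, \on{L}_W(\D^{\dagger}_{d/}))$, since both functors invert marked edges. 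By the universal property of the localization $\Fun(\on{L}_W(\D^{\dagger}_{d/}), \Y) \simeq \Fun^{\dagger}(\D_{d/}, \Y)$, this corresponds to a natural transformation $\tilde{\phi}\colon \on{const}_{L(\on{id}_d)} \Rightarrow \on{id}_{\on{L}_W(\D^{\dagger}_{d/})}$ whose component at $L(\on{id}_d)$ remains the identity; by the same adjoint characterization this exhibits $L(\on{id}_d)$ as initial.

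For the second bullet, given a marked morphism $\alpha\colon d \to z$, I would produce a marked edge from $\on{id}_d$ to $\alpha$ in $\D^{\dagger}_{d/}$ via the degenerate 2-simplex $s_0(\alpha)\colon \Delta^2 \to \D$, whose faces are $\on{id}_d$, $\alpha$, $\alpha$. Interpreted as a 1-simplex of $\D_{d/}$, this is a morphism $\on{id}_d \to \alpha$ whose image under the projection $\D_{d/} \to \D$ is precisely $\alpha$, hence marked by the definition of the marking on $\D^{\dagger}_{d/}$. The localization therefore sends it to an equivalence $L(\on{id}_d) \isom L(\alpha)$, and $L(\alpha)$ inherits initiality from $L(\on{id}_d)$. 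The hardest part of the argument is the transport of the counit $\phi$ through the localization: one must carefully track the $\infty$-categorical coherences so that the identification $\phi_{\on{id}_d} \simeq \on{id}_{\on{id}_d}$ persists under the universal-property equivalence and yields the required data for $\tilde{\phi}$.
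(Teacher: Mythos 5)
Your argument is correct, and it is worth noting that the paper itself does not actually prove this lemma: its entire proof is the sentence \enquote{The proof is analogous to the proof of Lemma 4.0.3 in \cite{AGS}}, deferring to the strict marked 2-categorical setting. Your proposal therefore supplies the self-contained argument the paper omits, and it follows the route one would expect that citation to encode: identify the fiber with $\on{L}_W(\D^{\dagger}_{d/})$ via \autoref{lem:locbasechange}, transport the initiality witness for $\on{id}_d$ through the localization, and then connect any marked $\alpha$ to $\on{id}_d$ by the marked edge $s_0(\alpha)$. The two points you flag as delicate both go through: a natural transformation $\tilde{\phi}\colon \on{const}_{L(\on{id}_d)} \Rightarrow \on{id}$ whose component at $L(\on{id}_d)$ is merely an \emph{equivalence} (not on-the-nose the identity) already forces $L(\on{id}_d)$ to be initial --- the naturality datum exhibits the self-equivalence $(\mathblank)\circ \tilde{\phi}_{L(\on{id}_d)}$ of each mapping space as null-homotopic, whence the mapping spaces are contractible --- and this weaker condition is automatic, since whiskering $\tilde{\phi}$ with $L$ recovers $L\phi$ up to equivalence, so $\tilde{\phi}_{L(\on{id}_d)}\simeq L(\phi_{\on{id}_d})\simeq \on{id}$. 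Likewise your computation that $s_0(\alpha)$, read as a $1$-simplex of $\D_{d/}$ from $\on{id}_d$ to $\alpha$, projects to $\alpha$ under $\D_{d/}\to\D$ and is therefore marked is exactly what the paper's definition of the marking on $\D^{\dagger}_{d/}$ requires. The only thing I would add is an explicit reference for the standard criterion you invoke (initiality of $x$ is equivalent to the existence of $h\colon \on{const}_x\Rightarrow \on{id}$ with $h_x$ an equivalence), but this is a routine fact and not a gap.
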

\begin{proof}
  The proof is analogous to the proof of Lemma 4.0.3 in \cite{AGS}.
\end{proof}

\begin{theorem}\label{thm:infty1dagger}
	Let $\func{f: \C^{\dagger} \to \D^{\dagger}}$ be a functor of marked $\infty$-categories. Then $f$ is marked cofinal if and only the following conditions hold
	\begin{enumerate}
		\item For every $d \in \D$ there is a morphism $\func{g_d:d \to f(c)}$ which is initial in $\on{L}_W\left(\C_{d/}^{\dagger}\right)$ and in $\on{L}_W\left(\D_{d/}^{\dagger}\right)$.
		\item Every object in $\on{L}_W\left(\C_{d/}^{\dagger}\right)$ represented by a marked morphism in $\D^{\dagger}$ is initial.
		\item There exists a Cartesian morphism providing a solution to the lifting problem
		\[
			\begin{tikzcd}[ampersand replacement=\&]
				\partial \Delta^1 \arrow[d] \arrow[r,"\sigma"] \& \mathfrak{C}^{\dagger}_{\D /} \arrow[d] \\
				(\Delta^1)^{\sharp} \arrow[r] \arrow[ur,dotted] \& \D^{\dagger}
			\end{tikzcd}
		\]
		if the image of $\sigma$ consists in two initial objects in their respective fibers.
	\end{enumerate}
\end{theorem}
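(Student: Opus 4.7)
The strategy hinges on \autoref{prop:af2}, which reduces marked cofinality of $f$ to the statement that $\mathcal{A}_f : \mathfrak{C}_{\D/}^{\dagger} \to \mathfrak{D}_{\D/}^{\dagger}$ is an equivalence of Cartesian fibrations over $\D$. Since $\mathcal{A}_f$ is the map of Cartesian fibrations associated to a natural transformation of classifying functors (so it automatically preserves Cartesian edges), it is an equivalence precisely when it is an equivalence on each fiber. By \autoref{lem:locbasechange}, the fiber of $\mathcal{A}_f$ over $d \in \D$ is identified with the canonical comparison map $\on{L}_W(\C_{d/}^{\dagger}) \to \on{L}_W(\D_{d/}^{\dagger})$. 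The critical input is the lemma immediately preceding the theorem, which characterizes initial objects in the fibers of $\mathfrak{D}_{\D/}^{\dagger}$ (and, by the same argument, of $\mathfrak{C}_{\D/}^{\dagger}$) as exactly those objects represented by marked morphisms in $\D^{\dagger}$, and affirms their existence.

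For the forward implication, assume $\mathcal{A}_f$ is an equivalence of Cartesian fibrations. Applying the preceding lemma on the target and transporting its description of initial objects across the fiberwise equivalence yields conditions (1) and (2). For (3), the Cartesian lift in $\mathfrak{D}_{\D/}^{\dagger}$ of any marked edge of $\D^{\dagger}$ may be chosen between initial objects in the relevant fibers (combining the lemma with the Cartesian fibration structure); the fiberwise equivalence then transports this Cartesian edge back to a Cartesian edge in $\mathfrak{C}_{\D/}^{\dagger}$ with the prescribed initial-object endpoints.

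For the backward implication, condition (3) provides exactly the horizontal compatibility between initial objects needed to deduce, together with (1) and (2), that the canonical functor on each fiber preserves and reflects initial objects and is coherent with base change along marked edges. To establish the fiberwise equivalence $\on{L}_W(\C_{d/}^{\dagger}) \simeq \on{L}_W(\D_{d/}^{\dagger})$, we combine (1) and (2) with the preceding lemma: both localizations have initial objects, which on both sides coincide with the (non-empty) collection of objects represented by marked morphisms in $\D^{\dagger}$, and the canonical map matches these collections. Upgrading this alignment of initial objects to a full equivalence of $\infty$-categories proceeds along the lines of \autoref{thm:strict2catA} as proved in \cite{AGS}, using \autoref{thm:grothendieckcomputes} and condition (3) to exhibit every object of either localization as canonically built from its initial objects under marked base change.

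The main obstacle is the final step of the backward direction: promoting the compatibility of initial objects (guaranteed by (1), (2), and the preceding lemma) to a genuine equivalence of the localized $\infty$-categories. The expected resolution is to argue that every object of $\on{L}_W(\D_{d/}^{\dagger})$ is connected to an initial object by a zigzag of marked morphisms (rendered invertible in the localization), so that the fiberwise functor is pinned down, up to equivalence, by its action on initial objects --- an action which conditions (1) and (2) fix, with (3) providing the cross-fiber coherence needed to globalize to an equivalence of Cartesian fibrations.
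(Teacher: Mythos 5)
Your reduction to showing that $\mathcal{A}_f$ is an equivalence of Cartesian fibrations via \autoref{prop:af2} matches the paper, and your forward implication is essentially the paper's. The backward implication, however, contains two genuine gaps. First, you assert that the lemma preceding the theorem applies ``by the same argument'' to $\mathfrak{C}^{\dagger}_{\D/}$, so that its fibers $\on{L}_W\left(\C^{\dagger}_{d/}\right)$ automatically have initial objects characterized as the objects represented by marked morphisms. This is false: that lemma for $\mathfrak{D}^{\dagger}_{\D/}$ rests on $\id_d$ being initial in $\D_{d/}$, while the comma categories $\C^{\dagger}_{d/}$ have no such object in general (they may even be empty). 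The existence of initial objects in $\on{L}_W\left(\C^{\dagger}_{d/}\right)$ and their realization by marked morphisms are precisely the content of hypotheses (1) and (2); treating them as automatic would render the theorem's conditions vacuous. (Moreover, the lemma only says that marked morphisms yield initial objects, not that every initial object arises this way.)

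Second, your strategy for the converse --- match up initial objects fiberwise and then ``pin down'' the functor by connecting every object to an initial one through a zigzag of marked morphisms --- does not go through. There is no reason for an arbitrary object of $\on{L}_W\left(\D^{\dagger}_{d/}\right)$ to be linked to $\id_d$ by marked morphisms, and a functor that merely matches initial objects need be neither fully faithful nor essentially surjective. The paper's argument is structurally different: it builds a single global inverse $\Xi$ rather than fiberwise ones. Conditions (1) and (3) together with \autoref{prop:CartTrivKanFib} produce a section $s_f\colon \D \to \mathfrak{C}^{\dagger}_{\D/}$ landing in initial objects of the fibers and carrying marked edges to Cartesian edges; the universal property $\Fun^{\on{cart}}_{\D}\left(\mathfrak{D}^{\dagger}_{\D/},\mathfrak{C}^{\dagger}_{\D/}\right) \simeq \Fun^{\dagger}_{\D}\left(\D,\mathfrak{C}^{\dagger}_{\D/}\right)$ converts $s_f$ into a map of Cartesian fibrations $\Xi$. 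Both composites $\mathcal{A}_f\circ\Xi$ and $\Xi\circ\mathcal{A}_f$ are then identified with the respective identities because, by the ``moreover'' clause of \autoref{prop:CartTrivKanFib}, any two sections landing fiberwise in initial objects are initial in the relevant functor category and hence equivalent; condition (2) enters exactly here, to guarantee that the unit section $c\mapsto \id_{f(c)}$ of $\mathfrak{C}^{\dagger}_{\D/}$ lands in initial objects. This uniqueness-of-initial-sections mechanism is the idea missing from your proposal.
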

\begin{proof}
	By \autoref{prop:af2} it will suffice to show that $\mathcal{A}_f$ is an equivalence of Cartesian fibrations precisely when the conditions above are satisfied.

	Suppose that $\mathcal{A}_f$ is an equivalence and pick an inverse $\func{\Xi:\mathfrak{D}^{\dagger}_{\D /} \to \mathfrak{C}^{\dagger}_{\D /} }$ over $\D$. First we see that the first 2 conditions can be checked by noting that $\mathcal{A}_f$ induces an equivalence of $\infty$-categories upong passage to fibers. The existence of $\Xi$ implies the existence of a section $\func{s_{f}:\D \to \mathfrak{C}^{\dagger}_{\D /}}$ mapping each object of $\D$ to an initial object in the fiber and mapping marked edges to Cartesian edges. It is routine to show that the third condition in our theorem is satisfied.
	
	To show the converse we first observe that condition (1) together with \autoref{prop:CartTrivKanFib} imply the existence of a section $\func{s_{f}:\D \to \mathfrak{C}^{\dagger}_{\D /}}$ such that for every $d \in \D$ both $s_{f}(d)$ and $\mathcal{A}_f(s_{f}(d))$ are initial in their respective fibers. We claim that $s_{f}$ maps marked edges to Cartesian edges in $\mathfrak{C}^{\dagger}_{\D /}$. To see this given $\func{\alpha:(\Delta^1)^{\sharp} \to \D^{\dagger}}$ we denote by $\widetilde{\alpha}$ the Cartesian lift provided by condition (3). One immediately checks that there exists an equivalence in $\func{\Delta^1 \to[u] \mathfrak{C}^{\dagger}_{\D/}}$ such that $s_f(\alpha)\sim u \circ \widetilde{\alpha} $ and thus $s_f(\alpha)$ is Cartesian. In particular we obtain a map of Cartesian fibrations 
\[
	\func{\Xi: \mathfrak{D}^{\dagger}_{\D / } \to \mathfrak{C}^{\dagger}_{\D /}.}
\]
	We fix the notation
	\[
		\func{\Gamma_{\D}: \Fun_{\D}^{\on{cart}}( \mathfrak{D}^{\dagger}_{\D / }, \mathfrak{D}^{\dagger}_{\D / })\to[\simeq] \Fun^{\dagger}_{\D}(\D, \mathfrak{D}^{\dagger}_{\D / })}
	\]
	and observe that both the identity functor on $ \mathfrak{D}^{\dagger}_{\D / }$ and $\mathcal{A}_f \circ \Xi$ get mapped under $\Gamma_{\D}$ to sections landing in initial objects in the fibers. It follows from \autoref{prop:CartTrivKanFib} that $\mathcal{A}_f \circ \Xi \sim id$.
	
	Similarly we consider 
	\[
		\func{\Gamma_{\C}: \Fun_{\D}^{\on{cart}}( \mathfrak{C}^{\dagger}_{\D / }, \mathfrak{C}^{\dagger}_{\D / })\to[\simeq] \Fun^{\dagger}_{\D}(\C, \mathfrak{C}^{\dagger}_{\D / })}
	\]
	and observe that  $\Gamma_{\C}(\Xi \circ \mathcal{A}_f)(c)=s_f(f(c))$ which is initial in the fiber by construction. Similarly the image of the identity functor under $\Gamma_{\C}$ sends $c \in \C$ to an object represented by a marked morphism. Condition (2) implies that this object is initial. Since both maps can factored through the pullback
	\[
		\begin{tikzcd}[ampersand replacement=\&]
			f^*\left(\mathfrak{C}_{\D/}\right) \arrow[d] \arrow[r] \&  
			 \mathfrak{C}^{\dagger}_{\D / } \arrow[d] \\
			 \C \arrow[r] \& \D
		\end{tikzcd}
	\]
	we can apply \autoref{prop:CartTrivKanFib} to their factorizations. This shows that $\Xi \circ \mathcal{A}_f \sim \on{id}$ and thus finishes the proof. 
\end{proof}

\begin{remark}
	Let $\func{\mathfrak{C}_{\D/}^{\dagger}: \D^{\op} \to \iCat_{\infty}}$. Note that condition (3) in \autoref{thm:infty1dagger} holds if and only if for every marked edge \begin{tikzcd}
		d \arrow[r,circled,"u"] & d'
	\end{tikzcd}
	in $\D$ the functor $\mathfrak{C}_{\D/}^{\dagger}(u)$ preserves initial objects.	
\end{remark}

We obtain as corollary the following generalization of Quillen's Theorem A to marked $\infty$-categories.

\begin{corollary}[Theorem $\text{A}^{\dagger}$]
	Let $\func{f:\C^{\dagger} \to \D^{\dagger}}$ be a marked cofinal functor. Then there exists an equivalence of $\infty$-categories
	\[
		\func{\on{L}_{W}\left(\C^{\dagger} \right) \to[\simeq] \on{L}_{W}\left(\D^{\dagger}\right) }
	\]	
\end{corollary}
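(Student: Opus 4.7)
The strategy is to deduce the corollary by applying the marked cofinality hypothesis to the constant diagram $\func{\underline{*}: \D \to \goth{Cat}_{\infty}}$ at the terminal $\infty$-category. The central observation is that the marked colimit of this diagram computes precisely the $\infty$-localization of $\D^{\dagger}$, so marked cofinality of $f$ will translate directly into an equivalence of localizations.

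To justify the identification $\colimdag_{\D}\underline{*} \isom \on{L}_{W}(\D^{\dagger})$, one invokes \autoref{thm:grothendieckcomputes}. The coCartesian unstraightening of $\underline{*}$ has contractible fibers, hence is canonically equivalent to the identity fibration $\func{\on{id}_{\D}:\D \to \D}$; with respect to this trivial fibration every edge is coCartesian, so the derived marking $\natural(\dagger)$ of \autoref{def:associatedmarked} reduces to the original marking of $\D^{\dagger}$. Applying \autoref{thm:grothendieckcomputes} then yields
\[
	\colimdag_{\D}\underline{*} \isom \on{L}_{W}\!\left(\on{Un}^{\on{co}}_{\D}(\underline{*})^{\natural(\dagger)}\right) \isom \on{L}_{W}(\D^{\dagger}),
\]
and the identical reasoning on the source, applied to the constant diagram $\underline{*}\circ f$, gives $\colimdag_{\C}(\underline{*}\circ f)\isom \on{L}_{W}(\C^{\dagger})$.

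Applying \autoref{cor:square} to $F=\underline{*}$ then produces a commutative square
\[
	\begin{tikzcd}[ampersand replacement=\&]
		\on{L}_{W}(\C^{\dagger}) \arrow[r,"\simeq"] \arrow[d,"f_{W}"'] \& \colimdag_{\C}(\underline{*}\circ f) \arrow[d] \\
		\on{L}_{W}(\D^{\dagger}) \arrow[r,"\simeq"] \& \colimdag_{\D}\underline{*}
	\end{tikzcd}
\]
whose horizontal arrows are the equivalences just produced and whose right vertical is the canonical comparison map of marked colimits. Marked cofinality of $f$ asserts precisely that the right vertical is an equivalence; two-out-of-three then forces the left vertical $f_{W}$ to be an equivalence, as required.

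The only subtle point is verifying that the left vertical genuinely is $f_{W}$ rather than some a priori different morphism. This reduces to unpacking \autoref{cor:square}: the pullback of $\on{id}_{\D}$ along $f$ is $f$ itself, and since $f$ preserves markings, the pulled-back $\natural(\dagger)$-marking on $\C$ agrees with the given $\C^{\dagger}$-marking, so the induced map on fiberwise localizations is $f_{W}$ by construction.
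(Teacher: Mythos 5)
Your proof is correct and follows essentially the same route as the paper: identify $\on{L}_{W}(\D^{\dagger})$ (and $\on{L}_{W}(\C^{\dagger})$) with the marked colimit of the constant point-valued functor via \autoref{thm:grothendieckcomputes}, then conclude from \autoref{cor:square} and marked cofinality by two-out-of-three. You simply spell out the details (the unstraightening of $\underline{*}$ being the identity fibration with the $\natural(\dagger)$-marking reducing to the given one, and the left vertical being $f_{W}$) that the paper's two-line proof leaves implicit.
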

\begin{proof}
	By \autoref{thm:grothendieckcomputes} we can identify $\on{L}_{W}\left(\D^{\dagger}\right)$ with the marked colimit of the constant point valued functor. The result then follows from \autoref{cor:square}.
\end{proof}

\begin{proposition}\label{prop:CartTrivKanFib}
	Let $\func{\pi:\X \to \D}$ be a Cartesian fibration of simplicial sets. Assume that for each vertex $c\in \C$, the $\infty$-category $\X_d$ has an initial object. Denote by $\X^\prime\subset \X$ the full simplicial subset of $\X$ spanned by those $x$ which are initial objects in $\X_{\pi(x)}$. Then 
	\[
	\func{\pi|_{\X^\prime}:\X^\prime\to \C}
	\]
	is a trivial Kan fibration of simplicial sets. Moreover, a section $s$ of $\pi:\X \to \C$ is initial in the $\infty$-category $\Fun_{\C}(\C,\X)$ if and only if $s$ factors through $\X^\prime$.  
\end{proposition}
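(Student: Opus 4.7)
The plan is to handle the two claims in order: first, to show $\pi|_{\X^\prime}\colon \X^\prime\to \D$ is a trivial Kan fibration by exhibiting it as a left fibration with contractible fibers, and second, to characterize initial sections via a pointwise argument in the induced Cartesian fibration on functor categories.

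For the first claim, the fibers $\X^\prime_d$ are full $\infty$-subcategories of $\X_d$ spanned by initial objects, hence are contractible Kan complexes (non-empty by hypothesis). The key step is to argue that each $x\in\X^\prime$ is $\pi$-initial in the sense of \cite{HTT}, i.e.\ that $\on{Map}_{\X}(x,y)\to\on{Map}_{\D}(\pi x,\pi y)$ is a trivial Kan fibration for every $y$. Fix $u\colon \pi x\to \pi y$ and choose a $\pi$-Cartesian lift $u^{\ast} y\to y$; the universal property of Cartesian edges identifies the fiber of $\on{Map}_{\X}(x,y)\to\on{Map}_{\D}(\pi x,\pi y)$ over $u$ with $\on{Map}_{\X_{\pi x}}(x,u^{\ast} y)$, which is contractible since $x$ is initial in $\X_{\pi x}$.

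From $\pi$-initiality of vertices of $\X^\prime$, the lifting property of $\pi|_{\X^\prime}$ against $\Lambda^n_0\hookrightarrow\Delta^n$ for $n\geq 2$ is a direct translation of $\X_{x/}\to \D_{\pi x/}$ being a trivial Kan fibration, and the resulting simplex lies automatically in $\X^\prime$ since $\Lambda^n_0$ already contains every vertex of $\Delta^n$ (for $n\geq 2$) and $\X^\prime$ is a full simplicial subset. For $n=1$, I would construct the required edge directly by composing a $\pi$-Cartesian lift of the base edge (ending at a chosen initial object of the target fiber) with the essentially unique morphism out of the initial source. Combined with the inner-horn lifts inherited from $\pi$ being an inner fibration, this makes $\pi|_{\X^\prime}$ a left fibration, and with contractible fibers this implies it is a trivial Kan fibration.

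For the second claim, I would use the fact (\cite{HTT}) that $\pi_\ast\colon \Fun(\D,\X)\to \Fun(\D,\D)$ is a Cartesian fibration with pointwise Cartesian morphisms, whose fiber over $\on{id}_\D$ is $\Fun_\D(\D,\X)$. A section $s$ is initial in this fiber if and only if $s$ is $\pi_\ast$-initial, and by the same Cartesian-lift-plus-initiality computation this holds iff each $s(d)$ is $\pi$-initial in $\X$, iff $s$ factors through $\X^\prime$. The backward direction of the biconditional is then immediate; for the forward direction one uses uniqueness of initial objects up to equivalence, together with the fact that equivalences in $\Fun_\D(\D,\X)$ restrict to fiberwise equivalences, so initiality of $s(d)$ is preserved. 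The main technical obstacle is this pointwise characterization of $\pi_\ast$-initiality; I would handle it by writing the fibers of $\on{Map}_{\Fun(\D,\X)}(s,t)\to \on{Map}_{\Fun(\D,\D)}(\pi s,\pi t)$ as $\D$-indexed ends of the corresponding mapping-space fibers in $\X$, each of which is contractible by pointwise $\pi$-initiality of $s$.
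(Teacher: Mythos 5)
The paper offers no argument of its own for this statement: its ``proof'' is a pointer to Proposition 2.4.4.9 of \cite{HTT}, of which the proposition is a verbatim restatement (modulo the $\C$/$\D$ typographical slips already present in the statement). Your sketch is, in substance, a correct reconstruction of that result's proof. The load-bearing step --- identifying the homotopy fiber of $\on{Map}_{\X}(x,y)\to\on{Map}_{\D}(\pi x,\pi y)$ over $u$ with $\on{Map}_{\X_{\pi x}}(x,u^{*}y)$ by means of a Cartesian lift, so that initiality of $x$ in its fiber yields relative initiality --- is exactly the right mechanism; and the observations that fullness of $\X'$ makes the $\Lambda^n_0$ ($n\geq 2$) and inner-horn lifts land in $\X'$ automatically, together with the hand-built lift for $n=1$ and the fact that a left fibration with contractible fibers is a trivial Kan fibration (Lemma 2.1.3.4 of \cite{HTT}), assemble into a complete argument for the first claim. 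Two small caveats. First, the relative initiality you invoke should be phrased as the trivial-fibration property of $\X_{x/}\to \X\times_{\D}\D_{\pi x/}$ rather than of $\X_{x/}\to\D_{\pi x/}$; the translation of the $\Lambda^n_0$-filling problem into a $\partial\Delta^{n-1}\subseteq\Delta^{n-1}$ lifting problem only goes through for the former, since the horn datum records where the face opposite the cone point lands in $\X$, not just in $\D$. Second, in the ``moreover'' clause, writing $\on{Map}_{\Fun_{\D}(\D,\X)}(s,t)$ as a limit over the twisted arrow category of fiberwise mapping spaces is a genuinely heavier input than the statement requires --- it is the territory of Proposition 6.9 of \cite{GHN}, i.e.\ \autoref{prop:natMO} of this paper --- though it does work, since a limit of contractible spaces is contractible and the fiber of a limit of Kan fibrations is the limit of the fibers. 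Your derivation of the converse implication from the existence of a section of the trivial fibration $\X'\to\D$, uniqueness of initial objects up to equivalence, and the fact that an equivalence of sections is a fiberwise equivalence, is correct as stated.
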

\begin{proof}
	See Proposition 2.4.4.9 in \cite{HTT}.
\end{proof}

\end{document}